\tikzstyle{every picture}+=[remember picture,inner xsep=0,inner ysep=0.25ex]
\def\VR{\kern-\arraycolsep\strut\vrule &\kern-\arraycolsep}
\def\vr{\kern-\arraycolsep & \kern-\arraycolsep}
\newcommand*{\sublabel}[1]{%
    \let\old@currentlabel\@currentlabel%
    \renewcommand{\@currentlabel}{\theenumii}%
    \label{#1}%
    \let\@currentlabel\old@currentlabel%
}
\newcommand*\interior[1]{\mathring{#1}}
\DeclareMathOperator{\Span}{span}
\DeclareMathOperator{\Rank}{rank}
\DeclareMathOperator{\sign}{sign}
\def\widebreve{\mathpalette\wide@breve}
\def\wide@breve#1#2{\sbox\z@{$#1#2$}%
     \mathop{\vbox{\m@th\ialign{##\crcr
\kern0.08em\brevefill#1{0.8\wd\z@}\crcr\noalign{\nointerlineskip}%
                    $\hss#1#2\hss$\crcr}}}\limits}
\def\brevefill#1#2{$\m@th\sbox\tw@{$#1($}%
  \hss\resizebox{#2}{\wd\tw@}{\rotatebox[origin=c]{90}{\upshape(}}\hss$}
\newcommand{\RR}{\mathbb R}
\newcommand{\NN}{\mathbb N}
\newcommand{\ZZ}{\mathbb Z}
\newcommand{\cT}{\mathcal T}
\newcommand{\cF}{\mathcal F}
\newcommand{\cB}{\mathcal B}
\newcommand{\cI}{\mathcal I}
\newcommand{\cL}{\mathcal L}
\newcommand{\cM}{\mathcal M}
\newcommand{\cA}{\mathcal A}
\newcommand{\cH}{\mathcal H}
\newcommand{\cZ}{\mathcal Z}
\newcommand{\cC}{\mathcal C}
\newcommand{\benu}{\begin{enumerate}}
\newcommand{\eenu}{\end{enumerate}}
\newcommand{\bop}{\begin{opomba}}
\newcommand{\eop}{\end{opomba}}
\newcommand{\supp}{\mathrm{supp}}
\newtheorem{theorem}{Theorem}[section]
\newtheorem{lemma}[theorem]{Lemma}
\newtheorem{proposition}[theorem]{Proposition}
\theoremstyle{definition}
\newtheorem{example}[theorem]{Example}
\newcommand{\mc}{\mathcal}
\newcommand{\mbb}{\mathbb}
\newcommand{\mbf}{\mathbf}
\definecolor{green-new}{rgb}{0.0, 0.5, 0.0}
\definecolor{cyan}{rgb}{0.0, 0.8, 1.0}
\theoremstyle{remark}
\newtheorem{remark}[theorem]{Remark}
\numberwithin{equation}{section}
\begin{document}

\title[TMP on reducible cubic curves I]{The truncated moment problem on reducible cubic curves I: 
Parabolic and Circular type relations}

\author[S. Yoo]{Seonguk Yoo}
\address{Department of Mathematics Education and RINS, Gyeongsang National University, Jinju, 52828, Korea. }
\email{seyoo@gnu.ac.kr}

\author[A. Zalar]{Alja\v z Zalar${}^{2,Q}$}
\address{Alja\v z Zalar, 
Faculty of Computer and Information Science, University of Ljubljana  \& 
Faculty of Mathematics and Physics, University of Ljubljana  \&
Institute of Mathematics, Physics and Mechanics, Ljubljana, Slovenia.}
\email{aljaz.zalar@fri.uni-lj.si}
\thanks{${}^2$Supported by the Slovenian Research Agency program P1-0288 and grants J1-50002, J1-2453, J1-3004.}
\thanks{${}^Q$This work was performed within the project COMPUTE, funded within the QuantERA II Programme that has received funding from the EU's H2020 research and innovation programme under the GA No 101017733 {\normalsize\euflag}}

\subjclass[2020]{Primary 44A60, 47A57, 47A20; Secondary 15A04, 47N40.}

\keywords{Truncated moment problems; $K$–moment problems; $K$–representing measure; Minimal measure; Moment matrix extensions}
\date{\today}
\maketitle

\begin{abstract}
    In this article we study the bivariate truncated moment problem (TMP) of degree $2k$ on reducible cubic curves. 
    First we show that every such TMP is equivalent after applying an affine linear transformation to one of 8 canonical forms of the curve.
    The case of the union of three parallel lines was solved in \cite{Zal22a}, while the degree 6 cases in \cite{Yoo17b}. 
    Second we characterize in terms of concrete numerical conditions the existence of the solution to the TMP on two of the remaining cases concretely, i.e., 
    a union of a line and a circle $y(ay+x^2+y^2)=0, a\in \RR\setminus \{0\}$, and 
    a union of a line and a parabola $y(x-y^2)=0$. 
    In both cases we also determine the number of atoms in a minimal representing measure.
 \end{abstract}


\section{
    Introduction
    }

Given a real $2$--dimensional sequence
	$$\beta\equiv\beta^{(2k)}=\{\beta_{0,0},\beta_{1,0},\beta_{0,1},\ldots,\beta_{2k,0},\beta_{2k-1,1},\ldots,
		\beta_{1,2k-1},\beta_{0,2k}\}$$
of degree $2k$ and a closed subset $K$ of $\RR^2$, the \textbf{truncated moment problem ($K$--TMP)} supported on $K$ for $\beta^{(2k)}$
asks to characterize the existence of a positive Borel measure $\mu$ on $\RR^2$ with support in $K$, such that
	\begin{equation}
		\label{moment-measure-cond}
			\beta_{i,j}=\int_{K}x^iy^j d\mu\quad \text{for}\quad i,j\in \ZZ_+,\;0\leq i+j\leq 2k.
	\end{equation}
If such a measure exists, we say that $\beta^{(2k)}$ has a representing measure 
	supported on $K$ and $\mu$ is its $K$--\textbf{representing measure ($K$--rm).}

In the degree-lexicographic order 
    $$\mathit{1},X,Y,X^2,XY,Y^2,\ldots,X^k,X^{k-1}Y,\ldots,Y^k$$ 
of rows and columns, the corresponding moment matrix to $\beta$
is equal to 
	\begin{equation}	
	\label{281021-1448}
            \mc M(k)\equiv
		\mc M(k;\beta):=
		\left(\begin{array}{cccc}
		\mc M[0,0](\beta) & \mc M[0,1](\beta) & \cdots & \mc M[0,k](\beta)\\
		\mc M[1,0](\beta) & \mc M[1,1](\beta) & \cdots & \mc M[1,k](\beta)\\
		\vdots & \vdots & \ddots & \vdots\\
		\mc M[k,0](\beta) & \mc M[k,1](\beta) & \cdots & \mc M[k,k](\beta)
		\end{array}\right),
	\end{equation}
where
	$$\mc M[i,j](\beta):=
		\left(\begin{array}{ccccc}
		\beta_{i+j,0} & \beta_{i+j-1,1} & \beta_{i+j-2,2} & \cdots & \beta_{i,j}\\
		\beta_{i+j-1,1} & \beta_{i+j-2,2} & \beta_{i+j-3,3} & \cdots & \beta_{i-1,j+1}\\
		\beta_{i+j-2,2} & \beta_{i+j-3,3} & \beta_{i+j-4,4} & \cdots & \beta_{i-2,j+2}\\
		\vdots & \vdots & \vdots & \ddots &\vdots\\
		\beta_{j,i} & \beta_{j-1,i+1} & \beta_{j-2,i+2} & \cdots & \beta_{0,i+j}\\
		\end{array}\right).$$
Let 
$\RR[x,y]_{\leq k}:=\{p\in \RR[x,y]\colon \deg p\leq k\}$ 
stand for the set of real polynomials in variables $x,y$ of total degree at most $k$.
For every $p(x,y)=\sum_{i,j} a_{i,j}x^iy^j\in \RR[x,y]_{\leq k}$ we define
its \textbf{evaluation} $p(X,Y)$ on the columns of the matrix $\mc M(k)$ by replacing each capitalized monomial $X^iY^j$
in $p(X,Y)=\sum_{i,j} a_{i,j}X^iY^j$ by the column of $\mc M(k)$, indexed by this monomial.
Then $p(X,Y)$ is a vector from the linear span of the columns of $\mc M(k)$. If this vector is the zero one, i.e., all coordinates are equal to 0, 
then we say $p$ is a \textbf{column relation} of $\mc M(k)$.
A column relation $p$ is \textbf{nontrivial}, if 
$p\not\equiv 0$.
We denote by
$\cZ(p):=\{(x,y)\in \RR^2\colon p(x,y)=0\}$,
the zero set of $p$.
We say that the matrix $\mc M(k)$ is \textbf{recursively generated (rg)} if for $p,q,pq\in \RR[x,y]_{\leq k}$ such that $p$ is a column relation of $\mc M(k)$, 
it follows that $pq$ is also a column relation of $\mc M(k)$.
The matrix $\mc M(k)$ is \textbf{$p$--pure}, if the only column relations of $\mc M(k)$ are those determined recursively by $p$. We call a sequence $\beta$ $p$--pure, if $\mc M(k)$ is $p$--pure.

A \textbf{concrete solution} to the TMP is a set of necessary and sufficient conditions for the existence of a $K$--representing measure $\mu$, 
that can be tested in numerical examples. 
Among necessary conditions, $\mc M(k)$ must be positive semidefinite (psd) and rg \cite{CF04,Fia95}, and by \cite{CF96}, if the support $\supp(\mu)$ of $\mu$ is a subset of 
$\cZ(p)$ for a polynomial $p\in \RR[x,y]_{\leq k}$, 
then $p$ is a column relation of $\mc M(k)$.
The bivariate $K$--TMP  is concretely solved in the following cases:
\begin{enumerate}
\item 
$K=\cZ(p)$ for a polynomial $p$ with $1\leq \deg p\leq 2$.

Assume that $\deg p=2$.
By applying an affine linear transformation it suffices to consider one of the canonical cases:  
    $x^2+y^2=1$,
    $y=x^2$,
    $xy=1$,
    $xy=0$,
    $y^2=y$.
    The case $x^2+y^2=1$ is equivalent to the univariate trigonometric moment problem, solved in \cite{CF02}.
    The other four cases were tackled
    in \cite{CF02, CF04, CF05,Fia15} by applying the far-reaching \textbf{flat extension theorem (FET)} \cite[Theorem 7.10]{CF96} (see also \cite[Theorem 2.19]{CF05b} and \cite{Lau05} for an alternative proof),
    which states that $\beta^{(2k)}$ admits a $(\Rank \mc M(k))$--atomic 
rm
if and only if $\mc M(k)$ is psd and admits a rank--preserving extension to a moment matrix $\mc M(k+1)$.
    For an alternative approach with shorter proofs compared to the original ones by reducing the problem to the univariate setting see
    \cite[Section 6]{BZ21} (for $xy=0$), \cite{Zal22a} (for $y^2=y$),
    \cite{Zal22b} (for $xy=1$) and \cite{Zal23} (for $y=x^2$).
    
    For $\deg p=1$ the solution is \cite[Proposition 3.11]{CF08} and uses the FET, 
    but can be also derived in the univariate setting (see \cite[Remark 3.3.(4)]{Zal23})
    \smallskip
\item 
	$K=\RR^2$, $k=2$ and $\mc M(2)$ is invertible.
    This case was first solved nonconstructively using convex geometry techniques in \cite{FN10} and later on constructively in \cite{CY16} by a novel rank reduction technique.
\item 
$K$ is one of $\cZ(y-x^3)$ \cite{Fia11,Zal21}, $\cZ(y^2-x^3)$ \cite{Zal21}, $\cZ(y(y-a)(y-b))$ \cite{Yoo17a,Zal22a}, $a,b\in \RR\setminus\{0\}$, $a\neq b$, or $\cZ(xy^2-1)$ \cite{Zal22b}. The main technique in \cite{Fia11} is the FET,
while in \cite{Zal21,Zal22a,Zal22b} the reduction to the univariate TMP is applied.\smallskip
\item $\mc M(k)$ has a special feature called \textit{recursive determinateness} \cite{CF13} or \textit{extremality} \cite{CFM08}.\smallskip
\item $\mc M(3)$ satisfies symmetric cubic column relations which can only  cause extremal moment problems. In order to satisfy the variety condition, another symmetric column relation must exist, and the solution  was obtained by checking consistency \cite{CY14}.\smallskip
\item  Non-extremal sextic TMP{\it s} 
with $\Rank \mc M(3)\leq 8$ and with finite or infinite algebraic varieties \cite{CY15}.
\smallskip
\item $\mc M(3)$ with reducible cubic column relations \cite{Yoo17b}.
\end{enumerate}

The solutions to the $K$--TMP, which are not concrete in the sense of definition from the previous paragraph, are known in the cases
    $K=\cZ(y-q(x))$ and $K=\cZ(yq(x)-1)$, where $q\in \RR[x]$. \cite[Section 6]{Fia11} gives a solution in terms of the bound on the degree $m$
for which the existence of a positive extension $\mc M(k+m)$ of $\mc M(k)$ is equivalent to the existence of a rm. In \cite{Zal23} the bound on $m$ 
is improved to $m=\deg q-1$ 
for curves of the form $y=q(x)$,
$\deg q\geq 3$,
and to $m=\ell+1$ for curves of the form $yx^\ell=1$, $\ell\in \NN\setminus\{1\}$.

References to some classical work on the TMP are monographs \cite{Akh65,AhK62,KN77}, while for a recent development in the area we refer a reader to \cite{Sch17}.
Special cases of the TMP have also been considered in \cite{Kim14,Ble15,Fia17,DS18,BF20,Kim21},
while \cite{Nie14} considers subspaces of the polynomial algebra and \cite{CGIK+} the TMP for commutative $\RR$--algebras.\\

The motivation for this paper was to solve the TMP concretely on some reducible cubic curves, other than the case of three parallel lines solved in \cite{Zal22a}. Applying an affine linear transformation we show that every such TMP is equivalent to the TMP on one of 8 canonical cases of reducible cubics of the form $yc(x,y)=0$, where $c\in \RR[x,y]$, $\deg c=2$.
In this article we solve the TMP for the cases 
$c(x,y)=ay+x^2+y^2$, $a\in \RR\setminus \{0\}$, 
and $c(x,y)=x-y^2$, which we call the \textit{circular} and the \textit{parabolic type}, respectively.
The main idea is to characterize the existence of a decomposition of $\beta$ into the sum $\beta^{(\ell)}+\beta^{(c)}$,
where $\beta^{(\ell)}=\{\beta_{i,j}^{(\ell)}\}_{i,j\in \ZZ_+,\; 0\leq i+j\leq 2k}$ and $\beta^{(c)}=\{\beta_{i,j}^{(c)}\}_{i,j\in \ZZ_+,\; 0\leq i+j\leq 2k}$
admit a $\RR$--rm and a $\cZ(c)$--rm, respectively.
Due to the form of the cubic $yc(x,y)=0$, it turns out that all but two moments of $\beta^{(\ell)}$ and $\beta^{(c)}$ are not already fixed by the original sequence, i.e.,                     
        $\beta_{0,0}^{(\ell)}$, 
        $\beta_{1,0}^{(\ell)}$, 
        $\beta_{0,0}^{(c)}$, 
        $\beta_{1,0}^{(c)}$
in the circular type case 
and
        $\beta_{0,0}^{(\ell)}$, 
        $\beta_{2k,0}^{(\ell)}$, 
        $\beta_{0,0}^{(c)}$, 
        $\beta_{2k,0}^{(c)}$
in the parabolic type case.
Then, by an involved analysis, the characterization of the existence of a decomposition $\beta=\beta^{(\ell)}+\beta^{(c)}$ can be done in both cases. We also characterize the number of atoms in a minimal representing measure, i.e., a measure with the minimal number of atoms in the support.


\subsection{ Readers Guide}
The paper is organized as follows. 
In Section \ref{preliminiaries} we present 
some preliminary results needed to establish the main results of the paper.
In Section \ref{case-reduction} we show that to
solve the TMP on every reducible cubic curve it is enough to consider 8 canonical type relations (see Proposition \ref{cases}).
In Section \ref{Section-common-approach}
we present the general procedure for solving the TMP on all but one of the canonical types and prove some results  
that apply to them.
Then in Sections \ref{circular} and \ref{parabolic}
we specialize to the circular and the parabolic type relations and solve them concretely (see Theorems \ref{221023-1854} and \ref{131023-0847}). In both cases we show, by numerical examples, that there are  pure sequences $\beta^{(6)}$ with a psd $\mc M(3)$ but without a rm (see Examples \ref{ex-line-plus-circle-no-rm} and \ref{ex-line-plus-parabola-no-rm}).


\section{Preliminaries}
\label{preliminiaries}

We write $\RR^{n\times m}$ for the set of $n\times m$ real matrices. For a matrix $M$ 
we call the linear span of its columns a \textbf{column space} and denote it by $\cC(M)$.
The set of real symmetric matrices of size $n$ will be denoted by $S_n$. 
For a matrix $A\in S_n$ the notation $A\succ 0$ (resp.\ $A\succeq 0$) means $A$ is positive definite (pd) (resp.\ positive semidefinite (psd)).
We write $\mathbf{0}_{t_1,t_2}$ for a $t_1\times t_2$ matrix with only zero entries and 
$\mathbf{0}_{t}=\mathbf{0}_{t,t}$ for short, where $t_1,t_2,t\in \NN$.
The notation
    $E^{(\ell)}_{i,j}$, 
    $\ell\in \NN$,
    stands for the usual $\ell\times \ell$ coordinate matrix with the only nonzero entry at the position $(i,j)$, which is equal to 1.

In the rest of this section let $k\in \NN$ and $\beta\equiv \beta^{ (2k)}=\{\beta_{i,j}\}_{i,j\in \ZZ_+,\; 0\leq i+j\leq 2k}$ be a bivariate sequence of degree $2k$.

\subsection{Moment matrix}
Let $\mc M(k)$ be the moment matrix of $\beta$ (see \eqref{281021-1448}).
Let $Q_1, Q_2$ be subsets of the set $\{X^iY^j\colon  i,j \in \ZZ_+,\; 0\leq i+j\leq k\}$.
We denote by 
$(\mc M(k))_{Q_1,Q_2}$ 
the submatrix of $\mc M(k)$ consisting of the rows indexed by the elements of $Q_1$
and the columns indexed by the elements of $Q_2$. In case $Q:=Q_1=Q_2$, we write 
$(\mc M(k))_{Q}:=(\mc M(k))_{Q,Q}$
for short. 

\subsection{Affine linear transformations} \label{affine linear-trans}

The existence of representing measures is invariant under invertible affine linear transformations of the form 
\begin{equation}
\label{alt}
    \phi(x,y)=(\phi_1(x,y),\phi_2(x,y)):=(a+bx+cy,d+ex+fy),\; (x,y)\in \RR^{2},
\end{equation}
$a,b,c,d,e,f\in \RR$ with $bf-ce \neq 0$.
Namely, let  $L_{\beta}:\mbb{R}[x,y]_{\leq 2k}\to \RR$ be a \textbf{Riesz functional} of the sequence $\beta$ defined by 
$$
	L_{\beta}(p):=\sum_{\substack{i,j\in \ZZ_+,\\ 0\leq i+j\leq 2k}} a_{i,j}\beta_{i,j},\qquad \text{where}\quad p=
	\sum_{\substack{i,j\in \ZZ_+,\\ 0\leq i+j\leq 2k}} a_{i,j}x^iy^j.
$$
We define $\widetilde \beta=\{\widetilde \beta_{i,j}\}_{i,j\in \ZZ_+,\; 0\leq i+j\leq 2k}$ by
	$$\widetilde \beta_{i,j}=L_{\beta}(\phi_1(x,y)^i \cdot \phi_2(x,y)^j).$$
By \cite[Proposition 1.9]{CF04}, $\beta$ admits a ($r$--atomic) $K$--rm if and only if $\widetilde \beta$ admits a ($r$--atomic) $\phi(K)$--rm.
We write $\widetilde \beta=\phi(\beta)$ and $\mc M(k;\widetilde \beta)=\phi(\mc M(k;\beta))$.

\subsection{Generalized Schur complements}\label{SubS2.1}
Let 
	\begin{equation*}
		M=\left( \begin{array}{cc} A & B \\ C & D \end{array}\right)\in \RR^{(n+m)\times (n+m)}
	\end{equation*}
be a real matrix where $A\in \RR^{n\times n}$, $B\in \RR^{n\times m}$, $C\in \RR^{m\times n}$  and $D\in \RR^{m\times m}$.
The \textbf{generalized Schur complement} \cite{Zha05} of $A$ (resp.\ $D$) in $M$ is defined by
	$$M/A=D-CA^\dagger B\quad(\text{resp.}\; M/D=A-BD^\dagger C),$$
where $A^\dagger $ (resp.\ $D^\dagger $) stands for the Moore--Penrose inverse of $A$ (resp.\ $D$). 

The following lemma will be frequently used in the proofs.

\begin{lemma}
	\label{140722-1055} 
	Let $n,m\in \NN$ and
		\begin{equation*}
			M=\left( \begin{array}{cc} A & B \\ B^{T} & C\end{array}\right)\in S_{n+m},
		\end{equation*} 
	where $A\in S_n$, $B\in \RR^{n\times m}$ and $C\in S_m$.
	If $\Rank M=\Rank A$, then the matrix equation
	\begin{equation}\label{140722-1055-eq}
		\begin{pmatrix}
			A\\
			B^T
		\end{pmatrix}
		W
		=
		\begin{pmatrix}
			B\\
			C
		\end{pmatrix},
	\end{equation}
	where $W\in \RR^{n\times m}$, is solvable and 
	the solutions are precisely the solutions of the matrix equation $AW=B$.
	In particular, $W=A^{\dagger}B$ satisfies \eqref{140722-1055-eq}.
\end{lemma}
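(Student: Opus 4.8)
The plan is to prove Lemma \ref{140722-1055} via the standard fact that for a symmetric psd-type block matrix the rank condition $\Rank M = \Rank A$ forces the column space of $\begin{pmatrix} B \\ C\end{pmatrix}$ to lie inside the column space of $\begin{pmatrix} A \\ B^T\end{pmatrix}$, and then to identify the solution set of \eqref{140722-1055-eq} with that of $AW=B$. Note that although $M$ is only assumed symmetric here, the intended applications are to psd moment matrices; but in fact the argument below needs only that $\Rank M=\Rank A$, so I will phrase it for symmetric $M$ throughout.

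First I would argue solvability. Since $\Rank M = \Rank A$ and the first $n$ rows of $M$ are exactly $\begin{pmatrix} A & B\end{pmatrix}$, the rank of the full $n+m$ columns of $M$ equals the rank of its first $n$ columns $\begin{pmatrix} A \\ B^T\end{pmatrix}$; hence $\cC\left(\begin{pmatrix} B \\ C\end{pmatrix}\right)\subseteq \cC\left(\begin{pmatrix} A \\ B^T\end{pmatrix}\right)$. Therefore each column of $\begin{pmatrix} B \\ C\end{pmatrix}$ is a linear combination of the columns of $\begin{pmatrix} A \\ B^T\end{pmatrix}$, which is precisely the statement that \eqref{140722-1055-eq} has a solution $W\in\RR^{n\times m}$. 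In particular, looking at the top block, any solution $W$ of \eqref{140722-1055-eq} satisfies $AW=B$.

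Next I would show the converse inclusion of solution sets: every solution of $AW=B$ solves \eqref{140722-1055-eq}. Here the key point is that $\cC(B)\subseteq\cC(A)$ (which follows from the inclusion of column spaces just established, restricting to the top $n$ coordinates, or directly since $\Rank\begin{pmatrix} A & B\end{pmatrix}\le \Rank M=\Rank A$), so $AW=B$ is itself solvable, e.g.\ by $W_0=A^\dagger B$; moreover $\cC(B^T)\subseteq \cC(A)$ by symmetry of $A$, so $B^T = B^T A^\dagger A$. Now if $AW=B$ then $B^T W = (A W_0')^T W$ for a suitable preimage, but more cleanly: write $B = A W$, so $B^T W = W^T A^T W = W^T A W$... — the crispest route is to use that $M/A = C - B^T A^\dagger B$, and the rank condition $\Rank M=\Rank A$ is equivalent to $M/A = 0$, i.e.\ $C = B^T A^\dagger B$. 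Then for any $W$ with $AW=B$ we have, using $B^T A^\dagger A = B^T$ (from $\cC(B^T)\subseteq\cC(A)$), that $B^T W = B^T A^\dagger A W = B^T A^\dagger B = C$. Combined with $AW=B$ this gives \eqref{140722-1055-eq}. Finally, $W=A^\dagger B$ solves $AW=B$ because $\cC(B)\subseteq\cC(A)$ implies $A A^\dagger B = B$, so it satisfies \eqref{140722-1055-eq} as a special case.

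The only mild obstacle is bookkeeping the several column-space inclusions ($\cC(B)\subseteq\cC(A)$, $\cC(B^T)\subseteq\cC(A)$, and the block version) and invoking the standard equivalence between $\Rank M=\Rank A$ and the vanishing of the generalized Schur complement $M/A = C-B^T A^\dagger B$ from Subsection \ref{SubS2.1} (see \cite{Zha05}); once these are in place the identifications $AW=B\iff B^TW=C$ under $AW=B$ are immediate, and the proof is short. I would present it in the order: (i) $\Rank M=\Rank A \Rightarrow \cC\big(\begin{pmatrix}B\\ C\end{pmatrix}\big)\subseteq \cC\big(\begin{pmatrix}A\\ B^T\end{pmatrix}\big)$, giving solvability and "solution of \eqref{140722-1055-eq} $\Rightarrow$ $AW=B$"; (ii) $M/A=0$, i.e.\ $C=B^TA^\dagger B$, plus $B^TA^\dagger A=B^T$, giving "$AW=B \Rightarrow B^TW=C$"; (iii) $AA^\dagger B = B$ to conclude $W=A^\dagger B$ works.
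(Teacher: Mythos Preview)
Your proposal is correct and follows essentially the same approach as the paper: both first deduce solvability of \eqref{140722-1055-eq} from the column-space inclusion forced by $\Rank M=\Rank A$, and then show that any solution of $AW=B$ automatically satisfies $B^TW=C$ via the identity $C=B^TA^{\dagger}B$ together with the orthogonality $\ker A\perp\cC(A)$ (equivalently $B^TA^{\dagger}A=B^T$). The only cosmetic difference is that the paper derives $C=B^TA^{\dagger}B$ by computing $B^TW$ for one particular solution $W$, whereas you cite it as the Schur-complement condition $M/A=0$; note also a small slip in your write-up: the inclusion you need for $B^T=B^TA^{\dagger}A$ is $\cC(B)\subseteq\cC(A)$ (not ``$\cC(B^T)\subseteq\cC(A)$'', which does not typecheck when $m\neq n$).
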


\begin{proof}
	The assumption $\Rank M=\Rank A$ implies that 
	\begin{equation}\label{140722-1056}
		\begin{pmatrix}
			A\\
			B^T
		\end{pmatrix}W	
		=
		\begin{pmatrix}
			AW\\
			B^TW
		\end{pmatrix}
		=
		\begin{pmatrix}
			B\\
			C
		\end{pmatrix}
	\end{equation}
	for some $W\in \RR^{n\times m}$. So the equation \eqref{140722-1055-eq} is solvable.
	In particular, $AW=B$. It remains to prove that any solution $W$ to $AW=B$ is also a solution to $\eqref{140722-1056}$.
	Note that all the solutions of the equation $A\widetilde W=B$ are 
	\begin{equation}\label{140722-1130}
		\widetilde W=A^\dagger B+Z,
	 \end{equation}
	where each column of $Z\in \RR^{n\times m}$ is an arbitrary vector from $\ker A$.
	So $W$ satisfiying $\eqref{140722-1056}$ is also of the form $A^\dagger B+Z_0$ for some $Z_0\in \RR^{n\times m}$ with columns belonging to $\ker A$.
	We have that
	\begin{equation}\label{140722-1129}
		C=B^TW=B^T(A^\dagger B+Z_0)=B^TA^\dagger B+B^TZ_0=B^TA^\dagger B,
	\end{equation}
 	where we used the fact that each column of $B$ belongs to $\cC(A)$ and $\ker(A)^\perp = \cC(A).$
	Replacing $W$ with any $\widetilde W$ of the form \eqref{140722-1130} in the calculation \eqref{140722-1129}
	gives the same result, which proves the statement of the proposition.
\end{proof}

The following theorem is a characterization of psd $2\times 2$ block matrices. 

\begin{theorem}[{\cite{Alb69}}]
    \label{block-psd} 
	Let 
		\begin{equation*}
			M=\left( \begin{array}{cc} A & B \\ B^{T} & C\end{array}\right)\in S_{n+m}
		\end{equation*} 
	be a real symmetric matrix where $A\in S_n$, $B\in \RR^{n\times m}$ and $C\in S_m$.
	Then: 
	\begin{enumerate}
		\item 
                \label{021123-1702}
                The following conditions are equivalent:
			\begin{enumerate}
			     \item 
                    \label{pt1-281021-2128} 
                        $M\succeq 0$.
                    \smallskip
			     \item 
                    \label{pt2-281021-2128} 
                        $C\succeq 0$,         
                    $\cC(B^T)\subseteq\cC(C)$ and $M/C\succeq 0$.
                    \smallskip
				 \item 
                    \label{pt3-281021-2128}
                        $A\succeq 0$,    
                        $\cC(B)\subseteq\cC(A)$ and $M/A\succeq 0$.
			\end{enumerate}
                \smallskip
		\item 
                \label{prop-2604-1140-eq2}
                If $M\succeq 0$, then 
		      $$\Rank M= \Rank A+\Rank M/A=\Rank C+\Rank M/C.$$
	\end{enumerate}
\end{theorem}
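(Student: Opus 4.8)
The plan is to derive both parts from a single block congruence, and to get the equivalence \textup{(a)}$\Leftrightarrow$\textup{(b)} for free from \textup{(a)}$\Leftrightarrow$\textup{(c)} by the symmetry that exchanges $A\leftrightarrow C$ and $B\leftrightarrow B^{T}$. So it suffices to prove \textup{(a)}$\Leftrightarrow$\textup{(c)} in part (1) together with the first equality $\Rank M=\Rank A+\Rank M/A$ in part (2).

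First I would record the elementary fact that $\cC(B)\subseteq\cC(A)$ is equivalent to $AA^{\dagger}B=B$ (because $AA^{\dagger}$ is the orthogonal projection onto $\cC(A)$), and that transposing this and using that $A$, hence $A^{\dagger}$, is symmetric also yields $B^{T}A^{\dagger}A=B^{T}$. Granting $\cC(B)\subseteq\cC(A)$, a direct multiplication verifies
\[
M=\begin{pmatrix} I & 0\\ B^{T}A^{\dagger} & I\end{pmatrix}
\begin{pmatrix} A & 0\\ 0 & M/A\end{pmatrix}
\begin{pmatrix} I & A^{\dagger}B\\ 0 & I\end{pmatrix},
\qquad M/A=C-B^{T}A^{\dagger}B,
\]
where the outer factors are mutually transpose and have determinant $1$; thus $M$ is congruent to the block--diagonal matrix $A\oplus(M/A)$.

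Given this, \textup{(c)}$\Rightarrow$\textup{(a)} is immediate: if $A\succeq0$ and $M/A\succeq0$ then $A\oplus(M/A)\succeq0$, and congruence preserves positive semidefiniteness, so $M\succeq0$. For \textup{(a)}$\Rightarrow$\textup{(c)}, assume $M\succeq0$. Then $A\succeq0$ as a principal submatrix. To obtain $\cC(B)\subseteq\cC(A)$, take $v\in\ker A$ and test $M$ on $w_{t}=\begin{pmatrix}v\\ tu\end{pmatrix}$ with $t\in\RR$ and $u$ arbitrary: using $Av=0$ one gets $0\le w_{t}^{T}Mw_{t}=2t\,u^{T}B^{T}v+t^{2}u^{T}Cu$ for all $t$, which forces $u^{T}B^{T}v=0$, and hence $B^{T}v=0$ as $u$ is arbitrary. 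So $\ker A\subseteq\ker B^{T}$, i.e.\ $\cC(B)\subseteq\cC(A)$, and now the displayed factorization applies; since congruence preserves positive semidefiniteness in both directions, $M\succeq0$ gives $A\oplus(M/A)\succeq0$ and in particular $M/A\succeq0$. This proves part (1).

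For part (2): when $M\succeq0$ the inclusion $\cC(B)\subseteq\cC(A)$ just shown makes the congruence $M=S^{T}\big(A\oplus(M/A)\big)S$ with $S$ invertible valid, and since rank is a congruence invariant, $\Rank M=\Rank\big(A\oplus(M/A)\big)=\Rank A+\Rank M/A$; the identity $\Rank M=\Rank C+\Rank M/C$ then follows by the $A\leftrightarrow C$ symmetry, using $\cC(B^{T})\subseteq\cC(C)$ from \textup{(b)}. I expect the only points needing care to be the generalized-inverse identities $AA^{\dagger}B=B$ and $B^{T}A^{\dagger}A=B^{T}$ under the column-space inclusion (and confirming the triple product really equals $M$), and the logical order of the second implication --- one must first extract $\cC(B)\subseteq\cC(A)$ from $M\succeq0$ before the block factorization may be invoked; the rest is formal.
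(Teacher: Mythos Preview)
Your proof is correct. Note that the paper does not actually prove this theorem: it is quoted from \cite{Alb69} and stated without proof, so there is no ``paper's own proof'' to compare against. Your argument via the block congruence $M=S^{T}\big(A\oplus(M/A)\big)S$ (valid once $\cC(B)\subseteq\cC(A)$ has been established from $M\succeq0$) is the standard one and is essentially Albert's original approach; in particular your derivation of $\ker A\subseteq\ker B^{T}$ from $M\succeq0$ and the use of rank-invariance under congruence are exactly the expected steps.
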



\subsection{Extension principle}

\begin{proposition}
	\label{extension-principle}
	Let $\cA\in S_n$ be positive semidefinite, 
	$Q$ a subset of the set $\{1,\ldots,n\}$
	and	
	$\cA|_Q$ the restriction of $\cA$ to the rows and columns from the set $Q$. 
	If $\cA|_Qv=0$ for a nonzero vector $v$,
	then $\cA\widehat v=0$, where $\widehat{v}$ is a vector with the only nonzero entries in the rows from $Q$ and such that the restriction 
	$\widehat{v}|_Q$ to the rows from $Q$ equals to $v$. 
\end{proposition}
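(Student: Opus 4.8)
The plan is to prove the statement directly by exploiting positive semidefiniteness of $\cA$, specifically the fact that a psd matrix controls the ``size'' of its off-diagonal entries by its diagonal entries. Write $\{1,\ldots,n\}=Q\sqcup Q^c$ and, after reordering so that the indices of $Q$ come first, block-decompose
\begin{equation*}
	\cA=\begin{pmatrix} \cA|_Q & B\\ B^T & D\end{pmatrix},
\end{equation*}
where $B$ collects the entries of $\cA$ with row index in $Q$ and column index in $Q^c$, and $D=\cA|_{Q^c}$. In this notation $\widehat v=(v^T,0)^T$, so the claim $\cA\widehat v=0$ is equivalent to the two identities $\cA|_Q v=0$ (which is the hypothesis) and $B^T v=0$.

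The key step is therefore to show $B^T v=0$, i.e.\ that $v$, which lies in $\ker(\cA|_Q)$, is orthogonal to every column of $B$. First I would invoke Theorem \ref{block-psd}\eqref{pt3-281021-2128}: since $\cA\succeq 0$, we have $\cA|_Q\succeq 0$ and, crucially, $\cC(B)\subseteq \cC(\cA|_Q)$. Now $\cA|_Q$ is symmetric, so $\cC(\cA|_Q)=\ker(\cA|_Q)^{\perp}$; hence every column of $B$ is orthogonal to $\ker(\cA|_Q)$, and in particular to $v$. This gives $B^T v=0$, and combined with the hypothesis $\cA|_Q v=0$ we conclude
\begin{equation*}
	\cA\widehat v=\begin{pmatrix} \cA|_Q & B\\ B^T & D\end{pmatrix}\begin{pmatrix} v\\ 0\end{pmatrix}=\begin{pmatrix} \cA|_Q v\\ B^T v\end{pmatrix}=\begin{pmatrix} 0\\ 0\end{pmatrix},
\end{equation*}
as desired.

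There is essentially no serious obstacle here; the only thing to be careful about is the bookkeeping of the permutation that moves $Q$ to the front (the statement is invariant under simultaneous row/column permutations of $\cA$, so this is harmless), and making sure one quotes the correct inclusion $\cC(B)\subseteq \cC(\cA|_Q)$ from Theorem \ref{block-psd} rather than the reverse. An alternative, more elementary route that avoids Theorem \ref{block-psd} entirely is to argue entrywise: for any index $p\in Q^c$, consider the $2\times 2$ principal-type argument, or more directly note that for the vector $w=\widehat v + t e_p$ with $e_p$ the $p$-th standard basis vector and $t\in\RR$, positivity $w^T\cA w\geq 0$ reads $2t\,(B^Tv)_p + t^2 \cA_{pp}\geq 0$ for all $t\in\RR$ (using $\widehat v^T\cA\widehat v=v^T\cA|_Q v=0$), which forces $(B^Tv)_p=0$; ranging over $p\in Q^c$ gives $B^Tv=0$. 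I would present the first argument as the main proof since Theorem \ref{block-psd} is already available, and perhaps remark on the elementary alternative.
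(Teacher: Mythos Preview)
Your proof is correct. The paper does not actually give its own proof of this proposition; it only cites \cite[Proposition 2.4]{Fia95} and \cite[Lemma 2.4]{Zal22a}. Your argument via Theorem \ref{block-psd}\eqref{pt3-281021-2128} is a clean way to obtain the key inclusion $\cC(B)\subseteq\cC(\cA|_Q)=\ker(\cA|_Q)^{\perp}$, and the elementary quadratic-form alternative you sketch is the classical route that does not rely on Albert's theorem; either is perfectly acceptable here.
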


\begin{proof}
See \cite[Proposition 2.4]{Fia95} or \cite[Lemma 2.4]{Zal22a} for an alternative proof.
\end{proof}


\subsection{Partially positive semidefinite matrices and their completions}\label{SubS2.4}

A \textbf{partial matrix} $A=(a_{i,j})_{i,j=1}^n$ is a matrix of real numbers $a_{i,j}\in \RR$, where some of the entries are not specified. 

A partial symmetric matrix $A=(a_{i,j})_{i,j=1}^n$ is 
\textbf{partially positive semidefinite (ppsd)} 
(resp.\ \textbf{partially positive definite (ppd)}) 
if the following two conditions hold:
\begin{enumerate} 
  \item $a_{i,j}$ is specified if and only if $a_{j,i}$ is specified and $a_{i,j}=a_{j,i}$.
  \item All fully specified principal minors of $A$ are psd (resp.\ pd). 
\end{enumerate}

For $n\in \NN$ write $[n]:=\{1,2,\ldots,n\}$.
We denote by 
$A_{Q_1,Q_2}$ 
the submatrix of 
$A\in \RR^{n\times n}$ 
consisting of the rows indexed by the elements of $Q_1\subseteq [n]$
and the columns indexed by the elements of $Q_2\subseteq [n]$. 
In case $Q:=Q_1=Q_2$, we write 
$A_{Q}:=A_{Q,Q}$
for short. 

It is well-known that a ppsd matrix 
$A(\mathbf{x})$ of the form as in Lemma \ref{psd-completion} below admits a psd completion
(This follows from the fact that the corresponding graph is chordal, see e.g., \cite{GJSW84,Dan92,BW11}). 
Since we will need an additional information about the rank of the completion $A(x_0)$ and 
the explicit interval of all possible $x_0$ for our results, we give a proof of Lemma \ref{psd-completion} based on 
the use of generalized Schur complements.

\begin{lemma}
	\label{psd-completion}
	Let 
		$A(\mathbf{x})$	
	be a partially positive semidefinite symmetric matrix of size $n\times n$ with the missing entries 
        in the positions $(i,j)$ and $(j,i)$, $1\leq i<j\leq n$.
	Let 
	\begin{align*}
		A_1 &= (A(\mathbf{x}))_{[n]\setminus \{i,j\}},\; 
            a=(A(\mathbf{x}))_{[n]\setminus \{i,j\},\{i\}},\; 
            b=(A(\mathbf{x}))_{[n]\setminus \{i,j\},\{j\}},\;
		\alpha=(A(\mathbf{x}))_{i,i},\;
		\gamma=(A(\mathbf{x}))_{j,j}.
	\end{align*}
	Let
		$$A_2=(A(\mathbf{x}))_{[n]\setminus \{j\}}	
			=\begin{pmatrix}
				A_1 & a \\
				a^T & \alpha
			\end{pmatrix}\in S_{n-1},\qquad
		A_3=(A(\mathbf{x}))_{[n]\setminus \{i\}}
			=\begin{pmatrix}
				A_1 & b \\
				b^T & \gamma
			\end{pmatrix}\in S_{n-1},$$
	and
		$$x_{\pm}:=b^TA_1^{\dagger}a\pm \sqrt{(A_2/A_1)(A_3/A_1)}\in \RR.$$
	Then: 
	\begin{enumerate}[(i)]
		\item\label{psd-comp-pt1} $A(x_{0})$ is positive semidefinite if and only if $x_0\in [x_-,x_+]$. 
	 	\item\label{psd-comp-pt2} 
			$$\Rank A(x_0)=
			\left\{\begin{array}{rl}
			\max\big\{\Rank A_2, \Rank A_3\big\},& \text{for}\;x_0\in \{x_-,x_+\},\\[0.5em]
			\max\big\{\Rank A_2, \Rank A_3\big\}+1,& \text{for}\;x_0\in (x_-,x_+).
			\end{array}\right.$$
            \item 
            The following statements are equivalent:
	\begin{enumerate}
		\item $x_-=x_+$.\smallskip
		\item $A_2/A_1=0$ or $A_3/A_1=0$.\smallskip
		\item $\Rank A_2=\Rank A_1$ or $\Rank A_3=\Rank A_1$.
	\end{enumerate}
	\end{enumerate}
\end{lemma}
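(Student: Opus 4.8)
The plan is to reduce everything to a computation with generalized Schur complements and then invoke Theorem \ref{block-psd}. The point is that the completed matrix $A(x_0)$ can be viewed as a bordered matrix with core $A_1$, and the two "known" borders $a,b$ together with the unknown entry $x_0$. Concretely, order the rows/columns so that $A(x_0)$ has the block form
\begin{equation*}
    A(x_0)=\begin{pmatrix} A_1 & a & b \\ a^T & \alpha & x_0 \\ b^T & x_0 & \gamma \end{pmatrix}.
\end{equation*}
Since $A_2\succeq 0$ and $A_3\succeq 0$ (both are fully specified principal submatrices of the ppsd matrix $A(\mathbf x)$), Theorem \ref{block-psd}\eqref{021123-1702} applied to $A_2$ and to $A_3$ gives $\cC(a)\subseteq\cC(A_1)$, $\cC(b)\subseteq\cC(A_1)$, $A_1\succeq 0$, and $A_2/A_1=\alpha-a^TA_1^\dagger a\geq 0$, $A_3/A_1=\gamma-b^TA_1^\dagger b\geq 0$. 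In particular the square root defining $x_\pm$ is of a nonnegative number, so $x_\pm\in\RR$ as claimed, and $x_-\leq x_+$.

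First I would prove (\ref{psd-comp-pt1}). Regard $A(x_0)$ as the $2\times2$ block matrix with upper-left block $A_1$ and the $2\times2$ lower-right block $D=\begin{pmatrix}\alpha & x_0\\ x_0 & \gamma\end{pmatrix}$, with off-diagonal block $B=\begin{pmatrix} a & b\end{pmatrix}$. Since $\cC(B)\subseteq\cC(A_1)$ holds automatically (columnwise, from the previous paragraph) and $A_1\succeq0$, Theorem \ref{block-psd}\eqref{021123-1702} tells us $A(x_0)\succeq0$ iff the Schur complement
\begin{equation*}
    A(x_0)/A_1 = \begin{pmatrix} \alpha - a^TA_1^\dagger a & x_0 - a^TA_1^\dagger b \\ x_0 - b^TA_1^\dagger a & \gamma - b^TA_1^\dagger b\end{pmatrix}=\begin{pmatrix} A_2/A_1 & x_0 - b^TA_1^\dagger a \\ x_0 - b^TA_1^\dagger a & A_3/A_1\end{pmatrix}
\end{equation*}
is psd (here I use $a^TA_1^\dagger b = b^TA_1^\dagger a$, a scalar, and symmetry of $A_1^\dagger$). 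A $2\times2$ symmetric matrix $\begin{pmatrix} p & r\\ r& q\end{pmatrix}$ with $p,q\geq0$ is psd iff $pq-r^2\geq0$, i.e. $r^2\leq pq$, i.e. $|x_0-b^TA_1^\dagger a|\leq\sqrt{(A_2/A_1)(A_3/A_1)}$, which is exactly $x_0\in[x_-,x_+]$. One edge case to handle: if $A_2/A_1=0$ (or $A_3/A_1=0$) then psd-ness forces $r=0$, i.e. $x_0=b^TA_1^\dagger a=x_-=x_+$; this is consistent with the displayed formula and is also where part (iii) comes from.

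Next, for (\ref{psd-comp-pt2}) I would use the rank additivity in Theorem \ref{block-psd}\eqref{prop-2604-1140-eq2}: when $A(x_0)\succeq0$,
\begin{equation*}
    \Rank A(x_0)=\Rank A_1 + \Rank\big(A(x_0)/A_1\big).
\end{equation*}
Likewise $\Rank A_2=\Rank A_1+\Rank(A_2/A_1)$ and $\Rank A_3=\Rank A_1+\Rank(A_3/A_1)$, so $\max\{\Rank A_2,\Rank A_3\}=\Rank A_1+\max\{\Rank(A_2/A_1),\Rank(A_3/A_1)\}$ where the two Schur complements here are $1\times1$, hence have rank $0$ or $1$. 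So it remains to compute the rank of the $2\times2$ psd matrix $A(x_0)/A_1$ displayed above: if $x_0\in(x_-,x_+)$ the determinant $(A_2/A_1)(A_3/A_1)-(x_0-b^TA_1^\dagger a)^2$ is strictly positive, so the rank is $2$, which forces both $A_2/A_1>0$ and $A_3/A_1>0$, giving $\max\{\Rank(A_2/A_1),\Rank(A_3/A_1)\}=1$ and hence $\Rank A(x_0)=\Rank A_1+2=\max\{\Rank A_2,\Rank A_3\}+1$. If $x_0\in\{x_-,x_+\}$ the determinant is $0$, so $\Rank(A(x_0)/A_1)\leq1$, and one checks it equals $\max\{\Rank(A_2/A_1),\Rank(A_3/A_1)\}$ by noting a diagonal entry of $A(x_0)/A_1$ is nonzero iff the corresponding $A_i/A_1$ is nonzero (and if both vanish, then $x_0-b^TA_1^\dagger a=0$ too, so the whole $2\times2$ block is zero). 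Part (iii) then falls out: $x_-=x_+$ iff $\sqrt{(A_2/A_1)(A_3/A_1)}=0$ iff $A_2/A_1=0$ or $A_3/A_1=0$, and since $A_i/A_1=0\iff\Rank(A_i/A_1)=0\iff\Rank A_i=\Rank A_1$ by the rank additivity above, all three conditions are equivalent.

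I do not expect a serious obstacle; the only delicate points are bookkeeping ones — making sure the column-inclusion hypotheses of Theorem \ref{block-psd} are legitimately available (they come for free from $A_2,A_3\succeq0$), correctly identifying the scalar $a^TA_1^\dagger b$ with $b^TA_1^\dagger a$, and carefully treating the degenerate sub-case where one or both of $A_2/A_1$, $A_3/A_1$ vanish so that the interval $[x_-,x_+]$ collapses to a point. The rest is the standard $2\times2$ psd/determinant criterion plus Schur-complement rank additivity.
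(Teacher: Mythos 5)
Your proof is correct and complete, and it takes a genuinely more self-contained route than the paper's. Both arguments start from the same permuted block form with $A_1$ in the upper-left corner and the $2\times 2$ block $\left(\begin{smallmatrix}\alpha & \mathbf{x}\\ \mathbf{x} & \gamma\end{smallmatrix}\right)$ in the lower-right, but the paper then defers to \cite[Lemma 2.11]{Zal21}, whose proof splits into the cases $A_2/A_1>0$ and $A_2/A_1=0$ and, in the former, uses the quotient formula $(A(x)/A_2)=\big(A(x)/A_1\big)/\big(A_2/A_1\big)$; the bulk of the paper's own text is devoted to removing the invertibility hypothesis on $A_1$ by appealing to the generalized quotient formula of \cite[Theorem 4]{CHM74} and verifying its column-inclusion hypotheses \eqref{021123-1643}. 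You bypass the quotient formula entirely by taking the generalized Schur complement of $A_1$ against the full $2\times 2$ lower-right block in a single application of Theorem \ref{block-psd}; the inclusions $a,b\in\cC(A_1)$ you need are obtained exactly as in the paper, from $A_2,A_3\succeq 0$. Everything then reduces to the elementary determinant criterion for psd $2\times 2$ matrices plus the rank additivity of Theorem \ref{block-psd}.\eqref{prop-2604-1140-eq2}, and the degenerate situation $A_2/A_1=0$ or $A_3/A_1=0$ is absorbed uniformly instead of being treated as a separate case. What your approach buys is a short, fully self-contained proof; what the paper's approach buys is brevity on the page by reusing the earlier lemma, at the cost of importing the Carlson--Haynsworth--Markham machinery.
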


\begin{proof}
We write
\begin{align*}
A(\mathbf{x})
    &=
\begin{pmatrix}
    A_{11} & a_{12} & A_{13} & a_{14} & A_{15} \\[0.2em]
    (a_{12})^T & \alpha & (a_{23})^T & \mathbf{x} & (a_{25})^T \\[0.2em]
    (A_{13})^T & a_{23} & A_{33} & a_{34} & a_{35}\\[0.2em]
    (a_{14})^T & \mathbf{x} & (a_{34})^T & \gamma & (a_{45})^T \\[0.2em]
    (A_{15})^T & a_{25} & (A_{35})^T & a_{45} & A_{55}
\end{pmatrix}\\[0.5em]
    &\in
    \begin{pmatrix}
    S_{i-1} & \RR^{(i-1)\times 1} & \RR^{(i-1)\times (j-i-1)} & \RR^{(i-1)\times 1} & \RR^{(i-1)\times (n-j)}\\
    \RR^{1\times (i-1)} & \RR & \RR^{1\times (j-i-1)} & \RR & \RR^{1\times (n-j)}\\
    \RR^{(j-1-1)\times (i-1)} & \RR^{(j-i-1)\times 1} & S_{j-i-1} & \RR^{(j-i-1)\times 1} & \RR^{(j-i-1)\times (n-j)}\\
    \RR^{1\times (i-1)} & \RR & \RR^{1\times (j-i-1)} & \RR & \RR^{1\times (n-j)}\\
    \RR^{(n-j)\times (i-1)} & \RR^{(n-j)\times 1} & \RR^{(n-j)\times (j-i-1)} & \RR^{(n-j)\times 1} & S_{n-j}
    \end{pmatrix}
\end{align*}
Let $P$ be a permutation matrix, which changes the order of columns to 
    $$1,2,\ldots,i-1,i+1,\ldots,j-1,j+1,\ldots,n,i,j.$$
Then 
    $$
    P^TA(\mathbf{x})P
    =
    \begin{pmatrix}
    A_{11} & A_{13} & A_{15} & a_{12} & a_{14} \\[0.2em]
    (A_{13})^T & A_{33} & A_{35} & a_{23} & a_{34} \\[0.2em]
    (A_{15})^T & (A_{35})^T & A_{55} & a_{25} & a_{45}\\[0.2em]
    (a_{12})^T & (a_{23})^T & (a_{25})^T & \alpha & \mathbf{x} \\[0.2em]
    (a_{14})^T & (a_{34})^T & (a_{45})^T & \mathbf{x} & \gamma
    \end{pmatrix}
    $$
Note that
\begin{equation}
    \label{021123-1626}
    P^TA(\mathbf{x})P
    =
    \begin{pmatrix}
        A_{1} & a & b\\[0.2em]
        a^T & \alpha & \mathbf{x} \\[0.2em]
        b^T & \mathbf{x} & \gamma
    \end{pmatrix}
    \qquad \text{and} \qquad
    P^TA(\mathbf{x})P\succeq 0\; \Leftrightarrow\; A(\mathbf{x})\succeq 0.
\end{equation}
Lemma \ref{psd-completion} with the missing entries in the positions $(n-1,n)$ and $(n,n-1)$
was proved in \cite[Lemma 2.11]{Zal21}
	using computations with generalized Schur complements under one additional assumption:
		\begin{equation}\label{assump-on-ranks-050822}
			A_1\;\text{is invertible}\quad \text{or} \quad \Rank A_1=\Rank A_2.
		\end{equation}
Here we explain why the assumption \eqref{assump-on-ranks-050822} can be removed from \cite[Lemma 2.11]{Zal21}.
The proof of \cite[Lemma 2.11]{Zal21} is separated into two
cases: $A_2/A_1>0$ and $A_2/A_1=0$. 
The case $A_2/A_1=0$ does not use \eqref{assump-on-ranks-050822}.
Assume now that $A_2/A_1>0$ or equivalently $\Rank A_2>\Rank A_1$.
Invertibility of $A_1$ (and by $A_2/A_1>0$ also $A_2$ is invertible)
is used in the proof of \cite[Lemma 2.11]{Zal21} for the application of the quotient formula (\cite{CH69})
\begin{equation}\label{050822-2137}
	(A(x)/A_2)=\big(A(x)/A_1\big)\big/\big(A_2/A_1\big),
\end{equation}
where
$$
A(x)/A_1=
\begin{pmatrix}		
	A_2/A_1 & \begin{pmatrix} A_1 & b \\ a^T & x \end{pmatrix}\big/A_1\\
	\begin{pmatrix} A_1 & a \\ b^T & x \end{pmatrix}\big/A_1 & A_3/A_1
\end{pmatrix}
$$
However, the formula \eqref{050822-2137} has been generalized \cite[Theorem 4]{CHM74}
to noninvertible $A_1$, $A_2$, where all Schur complements are the generalized ones, 
under the conditions: 
\begin{equation}
    \label{021123-1643}
    \begin{pmatrix}b & x\end{pmatrix}^T \in \cC(A_2)
    \qquad\text{and}\qquad
	a \in \cC(A_1).
\end{equation}
So if we show that the conditions \eqref{021123-1643} hold, 
the same proof as in  \cite[Lemma 2.11]{Zal21} can be applied in the case $A_1$ is singular.
From $A_2$ (resp.\ $A_3$) being psd, $a \in \cC(A_1)$ (resp.\ $b\in \cC(A_1)$) follows 
by Theorem \ref{block-psd}, used for $(M,A):=(A_2,A_1)$ (resp.\ $(M,A):=(A_3,A_1)$).
The assumption $A_2/A_1>0$ implies that 
	$\begin{pmatrix}a & \alpha\end{pmatrix}^T \notin \cC(\begin{pmatrix} A_1 & a^T \end{pmatrix}^T)$.
Since $a \in \cC(A_1)$, it follows that	$\begin{pmatrix}0 & 1\end{pmatrix}^T \in \cC(A_2)$.
Hence, $\begin{pmatrix}b & x\end{pmatrix}^T \in \cC(A_2)$ for every $x\in \RR$,
which concludes the proof of \eqref{021123-1643}.
\end{proof}


\subsection{Hamburger TMP}\label{SubS2.2}
Let $k\in \NN$.
For 
		$v=(v_0,\ldots,v_{2k} )\in \RR^{2k+1}$
we define the corresponding Hankel matrix as
	\begin{equation}\label{vector-v}
		A_{v}:=\left(v_{i+j} \right)_{i,j=0}^k
					=\left(\begin{array}{ccccc} 
							v_0 & v_1 &v_2 & \cdots &v_k\\
							v_1 & v_2 & \iddots & \iddots & v_{k+1}\\
							v_2 & \iddots & \iddots & \iddots & \vdots\\
							\vdots 	& \iddots & \iddots & \iddots & v_{2k-1}\\
							v_k & v_{k+1} & \cdots & v_{2k-1} & v_{2k}
						\end{array}\right)
					\in S_{k+1}.
	\end{equation}
We denote by 
	$\mbf{v_j}:=\left( v_{j+\ell} \right)_{\ell=0}^k$ the $(j+1)$--th column of $A_{v}$, $0\leq j\leq k$, i.e.,
		$$A_{v}=\left(\begin{array}{ccc} 
								\mbf{v_0} & \cdots & \mbf{v_k}
							\end{array}\right).$$
As in \cite{CF91}, the \textbf{rank} of $v$, denoted by $\Rank v$, is defined by
	$$\Rank v=
	\left\{\begin{array}{rl} 
		k+1,&	\text{if } A_{v} \text{ is nonsingular},\\[0.2em]
		\min\left\{\bf{i}\colon \bf{v_i}\in \Span\{\bf{v_0},\ldots,\bf{v_{i-1}}\}\right\},&		\text{if } A_{v} \text{ is singular}.
	 \end{array}\right.$$

For $m\leq k$ we denote
the upper left--hand corner $\left(v_{i+j} \right)_{i,j=0}^m\in S_{m+1}$ of $A_{v}$ of size $m+1$ by $A_{v}(m)$. 
A sequence $v$ is called \textbf{positively recursively generated (prg)} if for  $r=\Rank v$ the following two conditions hold:
		\begin{itemize}
			\item $A_v(r-1)\succ 0$.
                \smallskip
			\item If $r<k+1$, denoting 
				\begin{equation}\label{rg-coefficients-2809-2000}
					(\varphi_0,\ldots,\varphi_{r-1}):=A_{v}(r-1)^{-1}(v_r,\ldots,v_{2r-1})^{T},
				\end{equation}
				the equality
				\begin{equation}\label{recursive-generation}
					  v_j=\varphi_0v_{j-r}+\cdots+\varphi_{r-1}v_{j-1}
				\end{equation}
				holds for $j=r,\ldots,2k$.
		 \end{itemize}

The solution to the $\RR$--TMP is the following.

\begin{theorem} [{\cite[Theorems 3.9--3.10]{CF91}}]
    \label{Hamburger} 
	For $k\in \NN$ and $v=(v_0,\ldots,v_{2k})\in \RR^{2k+1}$ with $v_0>0$, the following statements are equivalent:
\begin{enumerate}	
	\item 
            There exists a $\RR$--representing measure for $\beta$.\smallskip
        \item 
            There exists a $(\Rank A_v)$--atomic $\RR$--representing measure for $\beta$.\smallskip
	\item\label{pt5-v2206} 
            $A_v$ is positive semidefinite
            and one of the following holds:
            \smallskip
            \begin{enumerate}
                \item 
                $A_v(k-1)$ is positive definite.
                \smallskip
                \item 
                $\Rank A_v(k-1)=\Rank A_v$.
            \end{enumerate}
                \smallskip
        \item
            $v$ is positively recursively generated.
\end{enumerate}
\end{theorem}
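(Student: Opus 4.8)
The plan is to prove the four statements equivalent by closing the loop $(2)\Rightarrow(1)\Rightarrow(3)\Rightarrow(2)$ and separately establishing $(3)\Leftrightarrow(4)$. The implication $(2)\Rightarrow(1)$ is immediate. For $(1)\Rightarrow(3)$: if $\mu$ is an $\RR$--representing measure for $v$, then $A_v=\int (1,x,\dots,x^k)^T(1,x,\dots,x^k)\,d\mu\succeq 0$; and if moreover $A_v(k-1)$ is not positive definite, I set $r=\Rank A_v(k-1)\le k-1$ and pick a monic $p$ of degree $r$ with $p(X)=0$ a column relation of $A_v(k-1)$. By Proposition \ref{extension-principle} this extends to a column relation of $A_v$, so $\supp\mu\subseteq\cZ(p)$ consists of at most $r$ points; hence $\mu$ is at most $r$--atomic, which gives $\Rank A_v\le r=\Rank A_v(k-1)\le\Rank A_v$ and thus $\Rank A_v=\Rank A_v(k-1)$.

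For $(3)\Leftrightarrow(4)$ I would first isolate two structural facts about psd Hankel matrices. (a) If $A_v\succeq 0$, then its leading principal block of size $r:=\Rank v$ is positive definite: otherwise some leading block $A_v(j)$ with $j<r-1$ is singular, and applying Proposition \ref{extension-principle} to its kernel produces a column relation of $A_v$ of degree $\le j<r$, contradicting the definition of $\Rank v$. (b) If $A_v(r-1)\succ 0$, $r\le k$, and the recursion \eqref{recursive-generation} holds for $j=r,\dots,2k$, then every column $\mathbf{v_m}$ ($m\ge r$) of $A_v$ lies in the span of $\mathbf{v_0},\dots,\mathbf{v_{r-1}}$, which yields a factorization $A_v=\binom{I_r}{E}A_v(r-1)\binom{I_r}{E}^T$; hence $A_v\succeq 0$ with $\Rank A_v=\Rank A_v(k-1)=r$. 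Granting (a), the direction $(3)\Rightarrow(4)$ is a matter of extracting the recursion: when $r=k+1$ there is nothing to verify and $A_v(k)=A_v\succ 0$; when $r\le k$ one gets $A_v(r-1)\succ 0$ from (a), reads off the relation $\mathbf{v_r}=\sum_{i<r}\varphi_i\mathbf{v_i}$ in $A_v$ with the $\varphi_i$ forced to be those of \eqref{rg-coefficients-2809-2000} (restrict to the first $r$ rows, where $A_v(r-1)$ is invertible), and then propagates this relation — using that the restriction map $\cC(A_v)\to\cC(A_v(k-1))$ is a bijection, both spaces having dimension $r$ — to obtain $x^\ell p(X)=0$ whenever $\deg(x^\ell p)\le k$, i.e.\ \eqref{recursive-generation} for all indices up to $2k$. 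The direction $(4)\Rightarrow(3)$ is immediate from (b) together with the trivial case $r=k+1$.

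The core is $(3)\Rightarrow(2)$. If $A_v\succ 0$ I would first perform one flat Hankel extension: choose $v_{2k+1}\in\RR$ arbitrarily and set $v_{2k+2}:=\mathbf b^TA_v^{-1}\mathbf b$ with $\mathbf b=(v_{k+1},\dots,v_{2k+1})^T$; by Theorem \ref{block-psd} the resulting $A_{v'}$ of size $k+2$ is psd of rank $k+1$, which reduces matters to the singular case. So assume $r=\Rank v\le k$ with $A_v(r-1)\succ 0$ (guaranteed by statement $(3)$ and fact (a), using $v_0>0$). Then $q\mapsto L_v(q)$ is a positive inner product on $\RR[x]_{\le r-1}$, and the monic degree--$r$ polynomial $p(x)=x^r-\sum_{i<r}\varphi_i x^i$ satisfies $L_v(x^\ell p)=0$ for $\ell=0,\dots,k$. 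A standard sign--change argument then shows $p$ has $r$ distinct real roots $x_1,\dots,x_r$: if $p$ changed sign at only $s<r$ points $t_1,\dots,t_s$, the polynomial $p(x)\prod_\ell(x-t_\ell)$ would be nonzero, of constant sign, and of even degree at most $2r-2$, hence a sum of two squares of polynomials of degree at most $r-1$, so its $L_v$--value would be both $0$ (orthogonality of $p$) and positive (since $A_v(r-1)\succ 0$). I then take $\mu=\sum_j\rho_j\delta_{x_j}$, where $(\rho_1,\dots,\rho_r)$ solves the Vandermonde system $\sum_j\rho_j x_j^\ell=v_\ell$ for $\ell=0,\dots,r-1$; polynomial division $q=ap+b$ shows the quadrature is exact on $\RR[x]_{\le 2r-1}$, whence $\rho_j=\int\ell_j^2\,d\mu=L_v(\ell_j^2)>0$ for the Lagrange basis polynomials $\ell_j$ at the nodes. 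Finally $\int x^\ell\,d\mu=v_\ell$ for all $\ell\le 2k$: for $\ell\le 2r-1$ by exactness, and for $\ell\ge 2r$ by induction, since $p(x_j)=0$ gives $x_j^\ell=\sum_{i<r}\varphi_i x_j^{\ell-r+i}$ while \eqref{recursive-generation} gives $v_\ell=\sum_{i<r}\varphi_i v_{\ell-r+i}$. Thus $\mu$ is an $r$--atomic $\RR$--representing measure with $r=\Rank A_v$. (Alternatively, the explicit construction could be replaced by an appeal to the flat extension theorem, since the argument above exhibits a rank--preserving extension $A_{v'}$ of $A_v$.)

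I expect the main obstacle to be the bookkeeping in $(3)\Rightarrow(4)$: the single column relation of $A_v$ controls the recursion \eqref{recursive-generation} only for indices up to $r+k$, and pushing it out to $2k$ is exactly where the hypothesis $\Rank A_v(k-1)=\Rank A_v$ — equivalently, the bijectivity of $\cC(A_v)\to\cC(A_v(k-1))$, i.e.\ recursive generation of $A_v$ — must be combined with Proposition \ref{extension-principle}. Fact (a) is the other delicate point, since its analogue fails for arbitrary psd matrices and relies essentially on the Hankel structure.
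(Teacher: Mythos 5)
The paper does not prove this statement at all: it is imported verbatim from Curto--Fialkow \cite[Theorems 3.9--3.10]{CF91}, so there is no in-paper proof to compare against. Judged on its own, your argument is a correct reconstruction of the classical one: the cycle $(2)\Rightarrow(1)\Rightarrow(3)\Rightarrow(2)$ plus $(3)\Leftrightarrow(4)$, with the measure in $(3)\Rightarrow(2)$ built by Gaussian-quadrature-type reasoning from the roots of the generating polynomial $p$. The two delicate points you flag are indeed the right ones, and your tools for them work: the injectivity of the truncation $\RR^{k+1}\to\RR^k$ on $\cC(A_v)$ (forced by $\Rank A_v(k-1)=\Rank A_v$) does push the relation $p(X)=0$ to $x^\ell p(X)=0$ for all $\ell\le k-r$, which is exactly \eqref{recursive-generation} up to index $2k$; and the sign-change/sum-of-two-squares argument correctly produces $r$ simple real roots.

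Two spots deserve tightening, though neither is a genuine gap. First, in $(1)\Rightarrow(3)$ you set $r=\Rank A_v(k-1)$ and assert a \emph{monic} column relation of degree exactly $r$; for a general psd Hankel matrix the first dependent column index need not equal the matrix rank (e.g.\ $v=(1,1,1,1,2)$), so either argue via the measure that the two coincide for $A_v(k-1)$, or simply take $s$ to be the first dependent column index and run the same chain $\Rank A_v\le|\supp\mu|\le s\le\Rank A_v(k-1)\le\Rank A_v$. Second, in fact (a) the singular block to which you apply Proposition \ref{extension-principle} is $A_v(r-1)$ itself (not some $A_v(j)$ with $j<r-1$); the extension of its kernel vector to a column relation of $A_v$ then exhibits some $\mathbf{v_j}$ with $j\le r-1$ in the span of its predecessors, contradicting the minimality in the definition of $\Rank v$. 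With these repairs the proof is complete and matches the standard Curto--Fialkow argument in both structure and substance.
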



\subsection{TMP on the unit circle}

The solution to the $\cZ(x^2+y^2-1)$--TMP is the following.

\begin{theorem}[{\cite[Theorem 2.1]{CF02}}]
	\label{circle-TMP}
	Let $p(x,y)=x^2+y^2-1$ and 
	$\beta:=\beta^{(2k)}=(\beta_{i,j})_{i,j\in \ZZ_+,i+j\leq 2k}$, where $k\geq 2$. 
	Then the following statements are equivalent:
	\begin{enumerate}
		\item
		      $\beta$ has a $\mc Z(p)$--representing measure.
            \smallskip
		\item
				$\beta$ has a $(\Rank \mc M(k))$--atomic $\mc Z(p)$--representing measure.
            \smallskip
		\item 
			$\mc M(k)$ is positive semidefinite
   and the relations $\beta_{2+i,j}+\beta_{i,2+j}=\beta_{i,j}$ hold for every $i,j\in \ZZ_+$ with $i+j\leq 2k-2$.
	\end{enumerate}
\end{theorem}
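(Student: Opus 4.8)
The statement is the Curto--Fialkow solution \cite[Theorem 2.1]{CF02} of the truncated moment problem on the unit circle, and the natural route is to reduce it to the classical (univariate) truncated trigonometric moment problem by passing to complex moments. The implication $(2)\Rightarrow(1)$ is trivial, and $(1)\Rightarrow(2)$ will follow once we establish $(1)\Rightarrow(3)$ and $(3)\Rightarrow(2)$; so the content lies in these two implications.

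For $(1)\Rightarrow(3)$: a moment matrix of a positive measure is always psd, so $\mc M(k)\succeq 0$. If $\mu$ is a $\cZ(p)$--representing measure for $\beta$ and $i,j\in\ZZ_+$ with $i+j\leq 2k-2$, then $(x^2+y^2-1)x^iy^j$ is a polynomial of degree at most $2k$ vanishing identically on $\supp\mu\subseteq\cZ(p)$, whence $0=\int_{\cZ(p)}(x^2+y^2-1)x^iy^j\,d\mu=\beta_{i+2,j}+\beta_{i,j+2}-\beta_{i,j}$, which is the asserted relation (all three moments are among the degree--$2k$ data since $(i+2)+j\leq 2k$ and $i+(j+2)\leq 2k$).

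For $(3)\Rightarrow(2)$: introduce the invertible linear change of coordinates on $\CC^{\binom{k+2}{2}}$ carrying the real monomial basis $\{x^iy^j:i+j\leq k\}$ to the complex monomial basis $\{z^a\bar z^b:a+b\leq k\}$, $z=x+iy$, and let $M_{\CC}(k)$ be the resulting Hermitian complex moment matrix, with entries the complex moments $\gamma_{a,b}:=L_\beta\big((x+iy)^a(x-iy)^b\big)$. Then $M_{\CC}(k)\succeq 0$ iff $\mc M(k)\succeq 0$, with equal ranks, and the relations in (3) are precisely the statement that $M_{\CC}(k)$ carries the recursively generated column relation $Z\bar Z=\mathit 1$ (because $z\bar z=x^2+y^2$); in particular $\gamma_{a,b}$ depends only on $a-b$, say $\gamma_{a,b}=c_{a-b}$ for $\lvert a-b\rvert\leq 2k$. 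Consequently every column $Z^a\bar Z^b$ of $M_{\CC}(k)$ equals $Z^{a-b}$ (if $a\geq b$) or $\bar Z^{\,b-a}$ (if $b\geq a$), so the $2k+1$ columns indexed by $\{\mathit 1,Z,\dots,Z^k,\bar Z,\dots,\bar Z^k\}$ span $\cC(M_{\CC}(k))$; since $M_{\CC}(k)\succeq 0$, a standard fact about psd matrices (a spanning set of columns has the same rank as the corresponding principal submatrix) forces $\Rank\mc M(k)=\Rank\cT$, where $\cT:=(c_{m-n})_{m,n=-k}^{k}$ is the $(2k+1)\times(2k+1)$ Toeplitz principal submatrix of $M_{\CC}(k)$ on those indices. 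Moreover $\cT\succeq 0$ as a principal submatrix of $M_{\CC}(k)$. By the classical truncated trigonometric moment problem, $\cT\succeq 0$ is exactly the condition for $(c_\ell)_{\lvert\ell\rvert\leq 2k}$ to admit a representing measure $\nu$ on the unit circle $\TT$, and $\nu$ may be taken $(\Rank\cT)$--atomic. Pushing $\nu$ forward under the injection $w\mapsto(\re w,\im w)$ and expanding $\big(\tfrac{w+\bar w}{2}\big)^i\big(\tfrac{w-\bar w}{2i}\big)^j$ into monomials $w^a\bar w^b$ (with $a+b=i+j\leq 2k$, hence $\lvert a-b\rvert\leq 2k$) shows that for $i+j\leq 2k$ the integral $\int x^iy^j$ against the pushforward equals the same linear combination of the $\gamma_{a,b}$'s that defines $\beta_{i,j}=L_\beta(x^iy^j)$; thus the pushforward is a $\cZ(p)$--representing measure for $\beta^{(2k)}$ with exactly $\Rank\cT=\Rank\mc M(k)$ atoms, which is $(2)$.

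The main obstacle is the bridge in the third paragraph: checking that the change of coordinates intertwines $\mc M(k)$ with $M_{\CC}(k)$, that the relations of (3) are equivalent to recursive generation by $Z\bar Z=\mathit 1$, and --- crucially --- that this yields the rank identity $\Rank\mc M(k)=\Rank\cT$, so that the minimal atom count transfers through the reduction. The remaining steps are bookkeeping with the substitutions $x=\tfrac{z+\bar z}{2}$, $y=\tfrac{z-\bar z}{2i}$; full details are carried out in \cite[Theorem 2.1]{CF02}.
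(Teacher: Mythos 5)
The paper does not prove this statement; it is quoted directly from \cite{CF02}, and the introduction describes the case $x^2+y^2=1$ precisely as being "equivalent to the univariate trigonometric moment problem," which is the reduction you carry out. Your sketch is a correct reconstruction of that argument: the passage to complex moments $\gamma_{a,b}$, the identification of the relations in (3) with the recursively generated column relation $Z\bar Z=\mathit{1}$, the rank transfer to the Toeplitz compression $\cT$ via the spanning-columns fact for psd matrices, and the pushforward of the $(\Rank\cT)$--atomic trigonometric representing measure all check out.
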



\subsection{Parabolic TMP}

We will need the following solution to the parabolic TMP (see \cite[Theorem 3.7]{Zal23}).

\begin{theorem}
    \label{131023-1211} 
    Let $p(x,y)=x-y^2$ and 
	$\beta:=\beta^{(2k)}=(\beta_{i,j})_{i,j\in \ZZ_+,i+j\leq 2k}$, where $k\geq 2$. 
    Let
        \begin{equation*}
			\cB=\{\mathit{1},Y,X,XY,X^2,X^2Y,\ldots,
                    X^i,X^iY,\ldots,X^{k-1},X^{k-1}Y,X^k\}.
	\end{equation*}
    
	Then the following statements are equivalent:
	\begin{enumerate}
		\item\label{131023-1346-pt0} 
		      $\beta$ has a $\mc Z(p)$--representing measure.
            \smallskip
		\item
				$\beta$ has a $(\Rank \mc M(k))$--atomic $\mc Z(p)$--representing measure.
            \smallskip
		\item 
            \label{parabolic--pt3}
			$\mc M(k)$ is positive semidefinite,
                the relations $\beta_{1+i,j}=\beta_{i,2+j}$ hold for every $i,j\in \ZZ_+$ with $i+j\leq 2k-2$
                and one of the following statements holds:
            \begin{enumerate}
            \smallskip
		\item\label{131023-1346-pt1} 
			$\big(\mc M(k)\big)_{\cB\setminus \{X^k\}}$ 
                is positive definite.
            \smallskip
		\item\label{131023-1346-pt2} 
			$\Rank \big(\mc M(k)\big)_{\cB\setminus \{X^k\}}
                =
                \Rank \mc M(k).$
		\end{enumerate}
            \smallskip
            \item\label{131023-1346-pt4} 
                The relations
                $\beta_{1+i,j}=\beta_{i,2+j}$ 
                hold for every $i,j\in \ZZ_+$ with $i+j\leq 2k-2$
                and 
                $\gamma=(\gamma_0,\ldots,\gamma_{4k})$,
                defined by
                $\gamma_i=\beta_{\lfloor \frac{i}{2}\rfloor,i\; \mathrm{mod}\; 2}$,
                admits a $\RR$--representing measure.
        \end{enumerate}
\end{theorem}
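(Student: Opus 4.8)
The plan is to prove the equivalence of the four statements in Theorem \ref{131023-1211} by establishing the cycle $(\ref{131023-1346-pt0})\Rightarrow(\ref{131023-1346-pt0})'\Rightarrow\ldots$, but in practice the cleanest route is to reduce everything to the Hamburger TMP for the univariate sequence $\gamma$ via Theorem \ref{Hamburger}, and to use Theorem \ref{block-psd} and Lemma \ref{psd-completion} to control the psd and rank bookkeeping. The trivial implications are $(\ref{131023-1346-pt0})'\Rightarrow(\ref{131023-1346-pt0})$ (an $r$--atomic measure is a measure) and the fact that, by \cite{CF96}, any $\cZ(p)$--rm forces $p(X,Y)=0$ as a column relation, which unwinds to the relations $\beta_{1+i,j}=\beta_{i,2+j}$; combined with positivity of $\mc M(k)$ for any sequence with a rm, this gives $(\ref{131023-1346-pt0})\Rightarrow(\ref{parabolic--pt3})$ once we also verify the rank/pd dichotomy (a)/(b). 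So the substance is $(\ref{parabolic--pt3})\Rightarrow(\ref{131023-1346-pt4})\Rightarrow(\ref{131023-1346-pt0})'$, together with checking that the dichotomy in (\ref{parabolic--pt3}) really does hold whenever a rm exists.

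First I would set up the dictionary between $\mc M(k)$ and the Hankel matrix $A_\gamma$. The relations $\beta_{1+i,j}=\beta_{i,2+j}$ say precisely that every moment $\beta_{i,j}$ depends only on $2i+j$, so that $\beta_{i,j}=\gamma_{2i+j}$, and hence every column of $\mc M(k)$ indexed by a monomial $X^aY^b$ is determined by $2a+b$. On the monomial set $\cB$ the exponents $2a+b$ run through $0,1,2,\ldots,2k$ without repetition (this is the point of choosing $\cB$: $1,Y,X,XY,X^2,\ldots$), so the compression $\big(\mc M(k)\big)_{\cB}$ is, after reindexing, exactly the Hankel matrix $A_\gamma$. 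Moreover I claim $\Rank\mc M(k)=\Rank\big(\mc M(k)\big)_{\cB}$ and, under the psd hypothesis, $\mc M(k)\succeq0\iff\big(\mc M(k)\big)_{\cB}\succeq0$: every column of $\mc M(k)$ outside $\cB$ equals some column inside $\cB$ (namely the one with the same value of $2a+b$), so by the extension principle (Proposition \ref{extension-principle}) the corresponding equality of columns, which holds on the $\cB$--compression, lifts to all of $\mc M(k)$; thus $\mc M(k)$ is the ``recursively generated fill-in'' of $A_\gamma$ and has the same rank and the same definiteness. The one index that needs care is $X^k$ versus $X^{k-1}Y$: both would like the value $2k$, which is why $\cB\setminus\{X^k\}$ carries exponents $0,\ldots,2k-1$ and $\big(\mc M(k)\big)_{\cB\setminus\{X^k\}}$ corresponds to $A_\gamma(k-1)$ in the notation preceding Theorem \ref{Hamburger} (up to the single extra row/column for $Y^?$—I would check the exact size matches $k\times k$).

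With this dictionary, $(\ref{parabolic--pt3})\Leftrightarrow$ ``$A_\gamma\succeq0$ and ($A_\gamma(k-1)\succ0$ or $\Rank A_\gamma(k-1)=\Rank A_\gamma$)'', which by Theorem \ref{Hamburger}(\ref{pt5-v2206}) is equivalent to $\gamma$ having a $\RR$--rm, hence equivalent to $(\ref{131023-1346-pt4})$ once we also note that the $\beta_{1+i,j}=\beta_{i,2+j}$ relations are common to both. It remains to produce the measure: given a $\RR$--rm $\nu=\sum_{\ell}\rho_\ell\,\delta_{t_\ell}$ for $\gamma$ (which exists and is $(\Rank A_\gamma)$--atomic by Theorem \ref{Hamburger}), push it forward under $t\mapsto(t^2,t)$ to get $\mu=\sum_\ell\rho_\ell\,\delta_{(t_\ell^2,t_\ell)}$, supported on $\cZ(x-y^2)$; then $\int x^iy^j\,d\mu=\sum_\ell\rho_\ell\,t_\ell^{2i+j}=\gamma_{2i+j}=\beta_{i,j}$, so $\mu$ is a $\cZ(p)$--rm, and it is $(\Rank A_\gamma)$--atomic $=(\Rank\mc M(k))$--atomic, giving $(\ref{131023-1346-pt4})\Rightarrow(\ref{131023-1346-pt0})'$. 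Finally, to close $(\ref{131023-1346-pt0})\Rightarrow(\ref{parabolic--pt3})$ I would start from any $\cZ(p)$--rm, deduce the column relations and psd-ness as above, and then invoke the already-proven $(\ref{131023-1346-pt4})\Rightarrow(\ref{131023-1346-pt0})'$ direction backwards: a $\cZ(p)$--rm for $\beta$ restricts (via $y\mapsto$ coordinate on the parabola) to a $\RR$--rm for $\gamma$, so $\gamma$ is prg and Theorem \ref{Hamburger} forces the dichotomy (a)/(b) on $A_\gamma(k-1)$, which translates back to (\ref{131023-1346-pt1})/(\ref{131023-1346-pt2}).

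The main obstacle I expect is purely combinatorial/bookkeeping rather than conceptual: verifying carefully that the monomial ordering on $\cB$ biject­s the exponents $2a+b$ onto $\{0,1,\ldots,2k\}$, that the induced reindexing of $\big(\mc M(k)\big)_{\cB}$ is literally a Hankel matrix $A_\gamma$ (and not merely Hankel up to a permutation), and that $\big(\mc M(k)\big)_{\cB\setminus\{X^k\}}$ matches $A_\gamma(k-1)$ on the nose—the off-by-one at the top degree between $X^k$ and $X^{k-1}Y$ is the delicate point, and it is exactly why the statement singles out $X^k$. The extension-principle argument that definiteness and rank descend from the $\cB$--compression to all of $\mc M(k)$ should be routine given Proposition \ref{extension-principle} and Theorem \ref{block-psd}(\ref{prop-2604-1140-eq2}), but I would write it out since it is used repeatedly.
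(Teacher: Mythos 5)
Your proposal is correct and follows essentially the route the paper relies on: the paper does not reprove this theorem but cites \cite[Theorem 3.7]{Zal23}, whose method is exactly your reduction of $\beta$ to the univariate Hamburger sequence $\gamma$ (with $\beta_{i,j}=\gamma_{2i+j}$, the compression $(\mc M(k))_{\cB}$ becoming the Hankel matrix $A_\gamma$, and the measure transported back and forth via $t\mapsto(t^2,t)$). Two bookkeeping points you flagged resolve as follows: within $\cB$ the exponents $2a+b$ do not collide ($X^{k-1}Y$ gives $2k-1$, not $2k$), so $(\mc M(k))_{\cB}$ is literally $A_\gamma\in S_{2k+1}$ and $(\mc M(k))_{\cB\setminus\{X^k\}}$ is $A_\gamma(2k-1)$ of size $2k$ — i.e.\ Theorem \ref{Hamburger} must be invoked with its degree parameter equal to $2k$, so the relevant corner is $A_\gamma(2k-1)$ rather than $A_\gamma(k-1)$.
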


\begin{remark}
The equivalence $\eqref{parabolic--pt3}\Leftrightarrow\eqref{131023-1346-pt4}$
is part of the proof of \cite[Theorem 3.7]{Zal23}.
\end{remark}


\section{
    TMP on reducible cubics - case reduction
    }
\label{case-reduction}

In this section we show that to solve the TMP
on reducible cubic curves it suffices, after applying an affine linear transformation, to solve the TMP on 8 canonical forms of curves. 

\begin{proposition}
\label{cases}
Let $k\in \RR$ and 
$\beta:=
\beta^{(2k)}=
(\beta_{i,j})_{i,j\in \ZZ_+,i+j\leq 2k}$.
Assume $\mc M(k;\beta)$ does not satisfy any nontrivial column relation between columns indexed by monomials of degree at most 2, but it 
satisfies a column relation 
$p(X,Y)=\bf{0}$, where $p\in \RR[x,y]$ is a reducible polynomial with $\deg p=3$.
If $\beta$ admits a representing measure, then there exists an invertible affine linear transformation $\phi$ of the form \eqref{alt}
such that the moment matrix 
$\phi\big(\mc M(k;\beta)\big)$
satisfies a column relation $q(x,y)=0$,
where $q$ has one of the following forms:

\renewcommand{\descriptionlabel}[1]{\hspace{\labelsep}\textit{#1}}

\begin{description}
    \item[Parallel lines type:] 
        $q(x,y)=y(a+y)(b+y)$,
        $a,b\in \RR\setminus \{0\}$,
        $a\neq b$.
        \smallskip
    \item[Circular type:]
        $q(x,y)=y(ay+x^2+y^2)$, 
        $a\in \RR\setminus\{0\}$.
        \smallskip
    \item[Parabolic type:] 
        $q(x,y)=y(x-y^2).$
        \smallskip
    \item[Hyperbolic type 1:] 
        $q(x,y)=y(1-xy)$.
        \smallskip
    \item[Hyperbolic type 2:] 
        $q(x,y)=y(x+y+axy)$,
        $a\in \RR\setminus\{0\}$.
        \smallskip
    \item[Hyperbolic type 3:] 
        $q(x,y)=y(ay+x^2-y^2)$,
        $a\in \RR$.
        \smallskip
    \item[Intersecting lines type:]         
        $q(x,y)=yx(y+1)$, 
        \smallskip
    \item[Mixed type:]  
        $q(x,y)=y(1+ay+bx^2+cy^2)$, 
        $a,b,c\in \RR$, $b\neq 0$.
\end{description}
\end{proposition}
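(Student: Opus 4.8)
The plan is to classify reducible cubic polynomials $p$ up to the action of invertible affine linear transformations, using the structure hypothesis on $\mc M(k;\beta)$ and the existence of a representing measure. Since $p$ is reducible of degree $3$, it factors either as a product of three (real) linear forms, or as a linear form times an irreducible quadratic form. The key preliminary observation is that, because $\beta$ admits a representing measure $\mu$, the matrix $\mc M(k;\beta)$ is psd and recursively generated, and by \cite{CF96} the support of $\mu$ lies in $\cZ(p)$; hence every real line or conic appearing as a factor of $p$ must carry mass, and in particular each irreducible factor $q_i$ of $p$ with $\cZ(q_i)\neq\emptyset$ must itself be a column relation of $\mc M(k)$ (again by \cite{CF96}, applied to the restriction of $\mu$ to $\cZ(q_i)$). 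Combined with the hypothesis that $\mc M(k;\beta)$ has \emph{no} nontrivial column relation among columns indexed by monomials of degree $\leq 2$, this forces that no linear form and no quadratic form individually divides $p$ in a way visible below degree $3$ — more precisely, $p$ has no repeated linear factor whose square would be a degree $\leq 2$ relation forced by rg, and if $p=\ell\cdot c$ with $\ell$ linear and $c$ an irreducible conic, then neither $\ell$ nor $c$ alone is a column relation, so both $\cZ(\ell)$ and $\cZ(c)$ must be nonempty (otherwise $p$ would reduce) but neither is forced at low degree. This is what rules out, e.g., a cubic with an empty-zero-set quadratic factor.

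Next I would run the affine classification. Affine transformations act on the leading (degree-$3$) form of $p$ and on the whole polynomial; the orbits are finite and correspond exactly to the classification of cubic plane curves that are reducible, intersected with the requirement (coming from the no-low-degree-relation hypothesis plus the measure) that the curve is genuinely "spread out" in the needed sense. Concretely:
\begin{description}
  \item[Three real linear factors.] Up to affine equivalence these are: three parallel lines (leading form $y^3$), which after normalization gives $y(a+y)(b+y)$ with $a,b$ distinct and nonzero (the cases $a=0$, $b=0$, or $a=b$ are excluded because they produce a repeated line, forcing a degree $\leq 2$ relation by rg, contradicting the hypothesis); two parallel lines plus a transversal, giving $yx(y+1)$ (intersecting lines type); and three lines in general position or through a common point — but a common point again yields, together with the variety/rg constraints, a configuration affinely equivalent to one already listed, and three lines in general position is affinely rigid and reduces to $yx(x+y+1)$-type, which one checks falls under the listed forms after relabelling.
  \item[Linear times irreducible conic.] The conic is affinely either an ellipse, a parabola, or a hyperbola. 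Normalizing the conic and then using the residual affine freedom to place the line: ellipse case gives, after scaling, $y(ay+x^2+y^2)$, $a\neq 0$ (circular type; $a\neq0$ because $a=0$ would put the line through the center making $\cZ(c)\cap\cZ(y)$ degenerate in a way again forced at low degree, or is absorbed); parabola gives $y(x-y^2)$ (parabolic type); hyperbola splits according to how the line meets the asymptotic directions, yielding the three hyperbolic types $y(1-xy)$, $y(x+y+axy)$, and $y(ay+x^2-y^2)$; and a genuinely "mixed" residual case where the line is tangent or the affine normalization leaves one more parameter produces $y(1+ay+bx^2+cy^2)$ with $b\neq0$.
\end{description}
In each branch I would write down the explicit transformation: first an affine map sending the linear factor to $\{y=0\}$ (using that $\mc M(k)$ has no degree-$1$ relation, so this factor is not already $0$, and the map is invertible), then a further affine map fixing $\{y=0\}$ setwise (i.e. of the form $(x,y)\mapsto(a'+b'x+c'y,\; f'y)$) to put the conic or the remaining two lines into the canonical shape, tracking how the coefficients transform. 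Proposition \ref{extension-principle} and the rg property are used throughout to justify that the excluded degenerate sub-cases would manifest as low-degree column relations.

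The main obstacle I expect is the \emph{bookkeeping of residual affine freedom}: after normalizing the linear factor to $y$, the stabilizer of $\{y=0\}$ in the affine group is only $4$-dimensional (the maps $(x,y)\mapsto(a'+b'x+c'y, f'y)$ with $b'f'\neq0$), and one must check carefully that this is exactly enough to reach each listed canonical conic/line-triple and not more — i.e. that the list is both exhaustive (every reducible cubic lands somewhere) and that each canonical form genuinely occurs. Getting the hyperbolic sub-classification right (three distinct canonical forms depending on the line's slope relative to the two asymptotic directions and whether it passes through the center) is the fiddliest part, as is confirming that the "mixed type" is not affinely equivalent to one of the circular/hyperbolic types — this requires an affine invariant (e.g. the cross-ratio or incidence pattern of the three intersection points of the line with the conic, counted with the line at infinity) to separate the orbits. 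Once the orbit representatives are fixed, the statement follows by assembling the per-branch transformations; the hypothesis that no column relation of degree $\leq 2$ holds is precisely what prunes the degenerate orbits and guarantees the listed parameter restrictions ($a\neq0$, $a\neq b$, $b\neq0$, etc.).
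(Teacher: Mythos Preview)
Your overall framework matches the paper's: factor $p=\ell\cdot c$ with $\ell$ linear, send $\ell$ to $y$ by an invertible affine map, then use the stabilizer of $\{y=0\}$ (maps $(x,y)\mapsto(a'+b'x+c'y,\,f'y)$) to normalize $c$. The paper, however, organizes this second step purely algebraically---a case tree on which coefficients of $c$ vanish, with an explicit substitution at each node to kill one coefficient at a time---rather than by the affine type of the conic.

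Your geometric organization has a genuine gap: the eight target forms do \emph{not} respect the dichotomy ``three lines vs.\ line $\times$ irreducible conic'', nor the ellipse/parabola/hyperbola trichotomy. The mixed type $y(1+ay+bx^2+cy^2)$, $b\neq 0$, is a catch-all whose conic factor may be an ellipse, a hyperbola, a parabola, or a pair of lines depending on the signs of $b,c$ and the discriminant; in particular three lines in general position lands there, not in the intersecting-lines type $yx(y+1)$ as you suggest. Likewise hyperbolic type~3 with $a=0$ is three concurrent lines. So branching on conic type does not deliver the listed forms directly; at the leaves you would still need exactly the coefficient manipulations the paper carries out. Your stated ``main obstacle''---matching the three hyperbolic sub-cases to line/asymptote incidence and exhibiting an affine invariant to separate mixed type from the others---is therefore both harder than what is required and partly unnecessary: the proposition only asserts that every case is equivalent to \emph{some} form on the list, not that the list is irredundant.

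Your opening use of \cite{CF96} is also misstated. It is not true that each irreducible factor of $p$ must itself be a column relation of $\mc M(k)$; the hypothesis explicitly forbids this for the linear and quadratic factors, and you contradict yourself two sentences later. What \cite{CF96} actually provides, and what the paper invokes at specific nodes of its case tree, is: whenever the \emph{real zero set} $\cZ(p)$ collapses set-theoretically to $\cZ(q)$ for some $q$ of degree $\leq 2$ (e.g.\ a repeated linear factor, or a conic factor with empty real locus so that $\cZ(p)=\cZ(\ell)$), then the existence of a representing measure forces $q$ to be a column relation, contradicting the hypothesis. This is how the parameter exclusions ($a\neq 0$, $a\neq b$, etc.) arise, and it is applied pointwise inside the case analysis rather than as a blanket statement about factors carrying mass.
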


\begin{remark} 
The name of the types of the form $q$ in 
Proposition \ref{cases} comes from the type
of the conic $\frac{q(x,y)}{y}=0$.
The conic
    $x+y+axy=0$,
    $a\in \RR\setminus \{0\}$,
is a hyperbola,
since the discriminant $a^2$ is positive. 
Similarly,  
the conic
    $ay+x^2-y^2=0$,
    $a\in \RR$,
is a hyperbola, since its discriminant is equal to 4.
Clearly, the conic
    $ay+x^2+y^2=0$,
    $a\in \RR$,
is a circle with the center 
$(0,-\frac{a}{2})$
and radius $\frac{a}{2}$.
\end{remark}

Now we prove Proposition \ref{cases}.

\begin{proof}[Proof of Proposition \ref{cases}]
    Since $p(x,y)$ is reducible, it is of the form
    $p=p_1p_2$, where 
    \begin{align*}
        p_1(x,y)
        &=a_0+a_1x+a_2y
        \quad
        \text{with } 
            a_i\in \RR,\;
            (a_1,a_2)\neq (0,0),\\
        p_2(x,y)
        &=b_0+b_1x+b_2y+b_3x^2+b_4xy+b_5y^2
        \quad
        \text{with } b_i\in \RR, \; (b_3,b_4,b_5)\neq (0,0,0).
    \end{align*}
    Without loss of generality we can assume that $a_2\neq 0$, since otherwise we 
    apply the alt $(x,y)\mapsto (y,x)$ to exchange the
    roles of $x$ and $y$.
    Since $a_2\neq 0$, the alt 
    $$
        \phi_1(x,y)=(x,a_0+a_1x+a_2y)
    $$
    is invertible and hence:
    \begin{align}
    \label{relation}
    \begin{split}
        &\text{A sequence }
        \phi_1(\beta)
        \text{ has a moment matrix }
        \phi_1\big(\mc M(k;\beta)\big)
        \text{ satisfying the column relation}\\
        &
        c_0Y+c_1X+c_2Y^2+c_3X^2Y+c_4XY^2+c_5Y^3=\mathbf{0}
        \text{ with }c_i\in\RR,\; 
        (c_3,c_4,c_5)\neq (0,0,0).
    \end{split}
    \end{align}
    We separate two cases according to the value of $c_3$.\\

    \noindent
    \textbf{Case 1: $c_3=0$.} 
    In this case \eqref{relation} is 
    equal to
    \begin{align}
    \label{relation-1}
    \begin{split}
        &\text{A sequence }
        \phi_1(\beta)
        \text{ has a moment matrix }
        \phi_1\big(\mc M(k;\beta)\big)
        \text{ satisfying the column relation}\\
        &
        c_0Y+c_1XY+c_2Y^2+c_4XY^2+c_5Y^3=\mathbf{0}
        \text{ with }c_i\in\RR,\; 
        (c_4,c_5)\neq (0,0).
    \end{split}
    \end{align}
    If $c_0=c_1=c_2=0$, then
    \eqref{relation-1} is equal to
    $c_4XY^2+c_5Y^3=\mathbf{0}$. Since by assumption $\beta$ and hence $\phi_1(\beta)$ admit  a rm, supported on 
        $$\cZ(y^2(c_4x+c_5y))=\cZ(y(c_4x+c_5y)),$$
    it follows by \cite{CF96} that 
    $c_4 XY+c_5 Y^2=\mathbf{0}$
    is a nontrivial column relation in 
    $\phi_1\big(\mc M(k;\beta)\big)$. Hence, also $\mc M(k;\beta)$
    satisfies a nontrivial column relation between columns indexed by monomials of degree at most 2,
    which is a contradiction with the assumption of the proposition. Therefore 
    $(c_0,c_1,c_2)\neq (0,0,0).$\bigskip

    \noindent 
    \textbf{Case 1.1:
    $c_0\neq 0$.}
    Dividing the relation in \eqref{relation-1}
    by $c_0$, we get:
    \begin{align}
    \label{relation2}
    \begin{split}
        &\text{A sequence }
        \phi_1(\beta)
        \text{ has a moment matrix }
        \phi_1\big(\mc M(k;\beta)\big)
        \text{ satisfying the column relation}\\
        &
        Y+
        \widetilde c_1XY+
        \widetilde c_2Y^2+
        \widetilde c_4XY^2+
        \widetilde c_5Y^3
        =\mathbf{0}
        \text{ with }\widetilde c_i\in\RR,\; 
        (\widetilde c_4,\widetilde c_5)\neq (0,0).\medskip
    \end{split}
    \end{align}

    \noindent 
    \textbf{Case 1.1.1:
    $\widetilde c_1=0$}. In this case 
    \eqref{relation2}
    is equivalent to:
    \begin{align}
    \label{relation1.1.1}
    \begin{split}
        &\text{A sequence }
        \phi_1(\beta)
        \text{ has a moment matrix }
        \phi_1\big(\mc M(k;\beta)\big)
        \text{ satisfying the column relation}\\
        &        
        Y+
        \widetilde c_2Y^2+
        \widetilde c_4XY^2+
        \widetilde c_5Y^3
        =\mathbf{0}
        \text{ with }\widetilde c_i\in\RR,\; 
        (\widetilde c_4,\widetilde c_5)\neq (0,0).\medskip 
    \end{split}
    \end{align}

    \noindent 
    \textbf{Case 1.1.1.1:
    $\widetilde c_4=0$.
    }
    In this case \eqref{relation1.1.1}
    is equivalent to
    \begin{align}
    \label{relation1.1.1.1}
    \begin{split}
        &\text{A sequence }
        \phi_1(\beta)
        \text{ has a moment matrix }
        \phi_1\big(\mc M(k;\beta)\big)
        \text{ satisfying the column relation}\\
        & 
        Y+
        \widetilde c_2Y^2+
        \widetilde c_5Y^3
        =\mathbf{0}
        \text{ with }\widetilde c_2\in \RR,\widetilde c_5\in\RR\setminus\{0\}.
    \end{split}
    \end{align}
    The quadratic equation $1+\widetilde c_2 y+ \widetilde c_5 y^2=0$
    must have two different real nonzero solutions, otherwise
        $\cZ(y(1+\widetilde c_2x+
        \widetilde c_5y))$
    is a union of two parallel lines.
    Then it follows by \cite{CF96} that
    there is a nontrivial column relation in 
    $\mc M(k;\beta)$
    between columns indexed by monomials of degree at most 2, which is a contradiction with the assumption of the proposition. 
    So we have the parallel lines type relation from the proposition.\medskip

    \noindent 
    \textbf{Case 1.1.1.2:
    $\widetilde c_4\neq 0$.}
    In this case the alt 
    $$
        \phi_2(x,y)=
        (
        -\widetilde c_2
        -
        \widetilde c_4 x
        -
        \widetilde c_5
        y,
        y
        \Big)
    $$
    is invertible and 
    applying it
    to $\phi_1(\beta)$, we obtain:
    \begin{align*}
    \begin{split}
        &\text{A sequence }
        (\phi_2\circ\phi_1)(\beta)
        \text{ has a moment matrix }
        (\phi_2\circ \phi_1)\big(\mc M(k;\beta)\big)
        \text{ satisfying}\\
        &     
        \text{the hyperbolic type 1 relation from the proposition.}\bigskip
    \end{split}
    \end{align*}

    \noindent 
    \textbf{Case 1.1.2:
    $\widetilde c_1\neq 0$.} We apply the alt 
    $$\phi_3(x,y)=(1+\widetilde c_1 x,y)$$
    to $\phi_1(\beta)$ and obtain:
    \begin{align}
    \label{relation3}
    \begin{split}
        &\text{A sequence }
        (\phi_3\circ\phi_1)(\beta)
        \text{ has a moment matrix }
        (\phi_3\circ \phi_1)\big(\mc M(k;\beta)\big)
        \text{ satisfying}\\
        &             
        \text{the column relation }
        XY+
        \widehat c_2Y^2+
        \widehat c_4XY^2+
        \widehat c_5Y^3
        =\mathbf{0}
        \text{ with }\widehat c_i\in\RR,\; 
        (\widehat c_4,\widehat c_5)\neq (0,0). \medskip
    \end{split}
    \end{align}

    \noindent 
    \textbf{Case 1.1.2.1:
    $\widehat c_4\neq 0$.}
    We apply the alt 
    $$\phi_4(x,y)=
    \Big(x-\frac{\widehat c_5}{\widehat c_4}y,y\Big)$$
    to $(\phi_3\circ \phi_1)(\beta)$ and obtain:
    \begin{align}
    \label{relation-1.1.2.1}
    \begin{split}
        &\text{A sequence }
        (\phi_4\circ\phi_3\circ\phi_1)(\beta)
        \text{ has a moment matrix }
        (\phi_4\circ\phi_3\circ \phi_1)\big(\mc M(k;\beta)\big)
        \text{ satisfying}\\
        &             
        \text{the column relation }
        XY+
        \breve c_2 Y^2+
        \widehat c_4XY^2
        =\mathbf{0}
        \text{ with }
        \breve c_2,
        \widehat c_4 \in\RR,\; 
        \widehat c_4\neq 0. \medskip
    \end{split}
    \end{align}

    \noindent 
    \textbf{Case 1.1.2.1.1:
    $\breve c_2= 0$.}
    In this case the relation in \eqref{relation-1.1.2.1} is of the form 
        \begin{equation*}
    \label{relation-1.1.2.1.1}
        XY+
        \widehat c_4XY^2
        =\mathbf{0}
        \quad
        \text{with }
        \widehat c_4 \in\RR\setminus \{0\}.
        \end{equation*}
    Applying the alt
        $$\phi_5(x,y)=(x,\widehat c_4 y)$$
    to 
    $(\phi_4\circ\phi_3\circ\phi_1)(\beta)$
    we obtain:
    \begin{align*}
    \begin{split}
        &\text{A sequence }
        (\phi_5\circ\phi_4\circ\phi_3\circ\phi_1)(\beta)
        \text{ has a moment matrix }
        (\phi_5\circ\phi_4\circ\phi_3\circ \phi_1)\big(\mc M(k;\beta)\big)
        \text{ satisfying}\\
        &             
        \text{the intersecting lines type relation from the proposition.}
        \medskip
    \end{split}
    \end{align*}
    
    \noindent 
    \textbf{Case 1.1.2.1.2:
    $\breve c_2\neq 0$.}
    We apply the alt 
    $$\phi_6(x,y)=(x,\breve{c}_2y)$$
    to      
    $(\phi_4\circ\phi_3\circ\phi_1)(\beta)$
    and obtain:
    \begin{align*}
    \begin{split}
        &\text{A sequence }
        (\phi_6\circ\phi_4\circ\phi_3\circ\phi_1)(\beta)
        \text{ has a moment matrix }
        (\phi_6\circ\phi_4\circ\phi_3\circ \phi_1)\big(\mc M(k;\beta)\big)
        \text{ satisfying}\\
        &             
        \text{the hyperbolic type 2 relation in the proposition.}\medskip
    \end{split}
    \end{align*}
    
    \noindent 
    \textbf{Case 1.1.2.2:
    $\widehat c_4= 0$.}
    In this case \eqref{relation3} is equivalent to:
    \begin{align}
    \label{relation-1.1.2.2}
    \begin{split}
        &\text{A sequence }
        (\phi_3\circ\phi_1)(\beta)
        \text{ has a moment matrix }
        (\phi_3\circ \phi_1)\big(\mc M(k;\beta)\big)
        \text{ satisfying}\\
        &             
        \text{the column relation }
        XY+
        \widehat c_2Y^2+
        \widehat c_5Y^3
        =\mathbf{0}
        \text{ with }\widehat c_2,\widehat c_5\in\RR,\; 
        \widehat c_5\neq 0. \medskip
    \end{split}
    \end{align}

    \noindent 
    \textbf{Case 1.1.2.2.1:
    $\widetilde c_2=0$.}
   Applying the alt 
        $$
        \phi_7(x,y)=(x,-\widehat c_5 y),
        $$
    to $(\phi_3\circ\phi_1)(\beta)$
    we obtain: 
    \begin{align*}
    \begin{split}
        &\text{A sequence }
        (\phi_7\circ\phi_3\circ\phi_1)(\beta)
        \text{ has a moment matrix }
        (\phi_7\circ\phi_3\circ \phi_1)\big(\mc M(k;\beta)\big)
        \text{ satisfying}\\
        &             
        \text{the parabolic type relation in the proposition.}\medskip
    \end{split}
    \end{align*}

    \noindent 
    \textbf{Case 1.1.2.2.2:
    $\widetilde c_2\neq 0$.}
    Applying the alt          
        $$\phi_8(x,y)=(x,\widehat c_2 y)$$
    to $(\phi_3\circ\phi_1)(\beta)$
    and obtain:
    \begin{align}
    \label{relation7}
    \begin{split}
        &\text{A sequence }
        (\phi_8\circ\phi_3\circ\phi_1)(\beta)
        \text{ has a moment matrix }
        (\phi_8\circ\phi_3\circ \phi_1)\big(\mc M(k;\beta)\big)
        \text{ satisfying}\\
        &             
        \text{the column relation }
        XY+
        Y^2+
        \breve c_5Y^3
        =\mathbf{0}
        \text{ with }\breve c_5\in\RR,\; 
        \widebreve c_5\neq 0.
    \end{split}
    \end{align}
Further on, the relation in
    \eqref{relation7} is equivalent to
    \begin{equation}
    \label{relation8}
        (\widebreve c_5)^{-1}
        (XY+
        Y^2)+
        Y^3
        =
        \mathbf{0}
        \quad
        \text{with }\widebreve c_5\in\RR,\; 
        \widebreve c_5\neq 0. 
    \end{equation}
    Finally, applying the alt
    $$
    \phi_9(x,y)=
    \big(
        (-\widebreve c_5)^{-1}(x+y),y
    \big)
    $$
    to $(\phi_8\circ\phi_3\circ\phi_1)(\beta)$,
    we obtain: 
    \begin{align*}
    \begin{split}
        &\text{A sequence }
        (\phi_9\circ\phi_8\circ\phi_3\circ\phi_1)(\beta)
        \text{ has a moment matrix }
        (\phi_9\circ\phi_8\circ\phi_3\circ \phi_1)\big(\mc M(k;\beta)\big)
        \\
        &             
        \text{satisfying the parabolic type relation in the proposition.}\bigskip
    \end{split}
    \end{align*}

    \noindent 
    \textbf{Case 1.2:
    $c_0= 0$.}
    In this case \eqref{relation-1} is equivalent to:
    \begin{align}
    \label{relation-1.2}
    \begin{split}
        &\text{A sequence }
        \phi_1(\beta)
        \text{ has a moment matrix }
        \phi_1\big(\mc M(k;\beta)\big)
        \text{ satisfying the column relation}\\
        &
        c_1XY+c_2Y^2+c_4XY^2+c_5Y^3=\mathbf{0}
        \text{ with }c_i\in\RR,\; 
        (c_4,c_5)\neq (0,0).
    \end{split}
    \end{align}
    Assume that $c_1=0$.
    Since by assumption $\beta$ and hence $\phi_1(\beta)$ admits a rm, supported on 
        $$\cZ(y^2(c_2+c_4x+c_5y))
        =\cZ(y(c_2+c_4x+c_5y)),$$
    it follows by \cite{CF96} that 
    $c_2Y+c_4 XY+c_5 Y^2=\mathbf{0}$
    is a nontrivial column relation in 
    $\phi_1\big(\mc M(k;\beta)\big)$. Hence, also $\mc M(k;\beta)$
    satisfies a nontrivial column relation between columns indexed by monomials of degree at most 2,
    which is a contradiction with the assumption of the proposition. Hence, $c_1\neq 0.$
    Applying the alt $(x,y)\mapsto (c_1x,y)$
    to $\phi_1(\beta)$, we obtain a sequence with the moment matrix satisfying the column relation of the form
    \eqref{relation3} and we can proceed as in the Case 1.1.2 above.
    \bigskip

    \noindent
    \textbf{Case 2: $c_3\neq 0$.}
    Applying the alt 
        $$\phi_{10}(x,y)=\big(\sqrt{|c_3|} x,y\big)$$
    to $\phi_1(\beta)$, we obtain:
    \begin{align}
    \label{relation4-v2}
    \begin{split}
        &\text{A sequence }
        (\phi_{10}\circ\phi_1)(\beta)
        \text{ has a moment matrix }
        (\phi_{10}\circ\phi_1)\big(\mc M(k;\beta)\big)
        \text{ satisfying}\\
        &
        \text{the column relation }
        c_0
        Y
        +
        \widetilde c_1 XY
        +
        c_2 Y^2
        +
        \frac{|c_3|}{c_3} X^2Y+
        \widetilde c_4 XY^2+
        c_5Y^3
        =\mathbf{0}
        \text{ with }c_i,\widetilde c_i\in\RR. \medskip
    \end{split}
    \end{align}

    \noindent
    \textbf{Case 2.1: $\widetilde c_1= 0$.}
    In this case
    \eqref{relation4-v2}
    is equivalent to:
    \begin{align}
    \label{relation5-v2}
    \begin{split}
        &\text{A sequence }
        (\phi_{10}\circ\phi_1)(\beta)
        \text{ has a moment matrix }
        (\phi_{10}\circ\phi_1)\big(\mc M(k;\beta)\big)
        \text{ satisfying}\\
        &
        \text{the column relation }
        c_0
        Y
        +
        c_2 Y^2
        +
        \frac{|c_3|}{c_3} X^2Y+
        \widetilde c_4 XY^2+
        c_5Y^3
        =\mathbf{0}
        \text{ with }c_i,\widetilde c_i\in\RR. \medskip
    \end{split}
    \end{align}

    \noindent
    \textbf{Case 2.1.1: $c_0= 0$.}
    Dividing  the relation in \eqref{relation5-v2} with $\frac{|c_3|}{c_3}$, \eqref{relation5-v2} is equivalent to:
    \begin{align}
    \label{relation5-v3-2}
    \begin{split}
        &\text{A sequence }
        (\phi_{10}\circ\phi_1)(\beta)
        \text{ has a moment matrix }
        (\phi_{10}\circ\phi_1)\big(\mc M(k;\beta)\big)
        \text{ satisfying}\\
        &
        \text{the column relation }
        \widetilde c_2 Y^2
        +
        X^2Y+
        \widehat c_4 XY^2+
        \widetilde c_5Y^3
        =\mathbf{0}
        \text{ with }\widetilde c_2,\widehat c_4, \widetilde c_5\in\RR. \medskip
    \end{split}
    \end{align}
     Applying the alt 
        $$\phi_{11}(x,y)=
        \Big(
        x+\frac{\widehat c_4}{2}y, y
        \Big)$$
    to $(\phi_{10}\circ\phi_1)(\beta)$, we obtain:
    \begin{align}
    \label{relation-151023-1509}
    \begin{split}
        &\text{A sequence }
        (\phi_{11}\circ\phi_{10}\circ\phi_1)(\beta)
        \text{ has a moment matrix }
        (\phi_{11}\circ\phi_{10}\circ\phi_1)\big(\mc M(k;\beta)\big)
        \\
        &
        \text{satisfying the column relation }
        \breve c_2 Y^2
        +
        X^2Y
        +
        \breve c_5Y^3
        =\mathbf{0}
        \text{ with }
        \breve c_2,\breve c_5\in\RR. \medskip
    \end{split}
    \end{align}
    
    \noindent
    \textbf{Case 2.1.1.1: 
    $\breve c_5=0$.}
     Since by assumption of the proposition, 
     $(\phi_{11}\circ\phi_{10}\circ\phi_1)(\beta)$ admits a rm, supported on 
        $\cZ(y(\breve c_2y+x^2))$,
    $\breve c_2$ in \eqref{relation-151023-1509} cannot be equal to 0.
    Indeed, $\breve c_2=0$
    would imply that 
    $\cZ(y(\breve c_2y+x^2))=
    \cZ(yx^2)=\cZ(yx)$
    and by \cite{CF96}, 
    $XY=\mathbf{0}$ 
    would be a nontrivial column relation in $(\phi_{11}\circ\phi_{10}\circ\phi_1)\big(\mc M(k;\beta)\big)$. Hence, also $\mc M(k;\beta)$
    would satisfy a nontrivial column relation between columns indexed by monomials of degree at most 2,
    which is a contradiction with the assumption of the proposition. Since $\breve c_2\neq 0$, after applying the
    alt 
        $$\phi_{12}(x,y)=
        (x,-\breve c_2 y)$$
    to $(\phi_{11}\circ\phi_{10}\circ\phi_1)(\beta)$, we obtain:
    \begin{align*}
    \begin{split}
        &\text{A sequence }
        (\phi_{12}\circ\phi_{11}\circ\phi_{10}\circ\phi_1)(\beta)
        \text{ has a moment matrix }
        (\phi_{12}\circ\phi_{11}\circ\phi_{10}\circ \phi_1)\big(\mc M(k;\beta)\big)
        \\
        &             
        \text{satisfying the parabolic type relation in the proposition.}\medskip
    \end{split}
    \end{align*}
    
    \noindent
    \textbf{Case 2.1.1.2: 
    $\breve c_5>0$.}
    Applying the
    alt 
        $$\phi_{13}(x,y)=
        (x,\sqrt{\breve c_5} y)$$
    to $(\phi_{11}\circ\phi_{10}\circ\phi_1)(\beta)$ we obtain:
    \begin{align*}
    \begin{split}
        &\text{A sequence }
        (\phi_{13}\circ\phi_{11}\circ\phi_{10}\circ\phi_1)(\beta)
        \text{ has a moment matrix }
        (\phi_{13}\circ\phi_{11}\circ\phi_{10}\circ \phi_1)\big(\mc M(k;\beta)\big)
        \\
        &             
        \text{satisfying the circular type relation in the proposition.}\medskip
    \end{split}
    \end{align*}
    
    \noindent
    \textbf{Case 2.1.1.3: 
    $\breve c_5<0$.}
        Applying the
    alt 
        $$\phi_{14}(x,y)=
        (x,\sqrt{-\breve c_5} y)$$
    to $(\phi_{11}\circ\phi_{10}\circ\phi_1)(\beta)$, we obtain:
    \begin{align*}
    \begin{split}
        &\text{A sequence }
        (\phi_{14}\circ\phi_{11}\circ\phi_{10}\circ\phi_1)(\beta)
        \text{ has a moment matrix }
        (\phi_{14}\circ\phi_{11}\circ\phi_{10}\circ \phi_1)\big(\mc M(k;\beta)\big)
        \\
        &             
        \text{satisfying the hyperbolic type 3 relation in the proposition.}\medskip
    \end{split}
    \end{align*} 
    
    \noindent
    \textbf{Case 2.1.2: $c_0\neq 0$.}
    Dividing the relation in
    \eqref{relation5-v2} with $c_0$, 
    \eqref{relation5-v2} is equivalent to:
    \begin{align}
    \label{relation5-v3}
    \begin{split}
        &\text{A sequence }
        (\phi_{10}\circ\phi_1)(\beta)
        \text{ has a moment matrix }
        (\phi_{10}\circ\phi_1)\big(\mc M(k;\beta)\big)
        \text{ satisfying}\\
        &
        \text{the column relation }
        Y
        +
        \widetilde c_2 Y^2
        +
        \widetilde c_3 X^2Y+
        \widehat c_4 XY^2+
        \widetilde c_5 Y^3
        =\mathbf{0}
        \text{ with }\widetilde c_i, \widehat c_4\in\RR,
        \widetilde c_3\neq 0.
    \end{split}
    \end{align} 
    Applying the
    alt 
        $$\phi_{15}(x,y)=
        \Big(
            x+\frac{\widehat c_4}{2\widetilde c_3},
            y
        \Big)
        $$
    to $(\phi_{10}\circ\phi_1)(\beta),$ we obtain:
    \begin{align*}
    \begin{split}
        &\text{A sequence }
        (\phi_{15}\circ\phi_{10}\circ\phi_1)(\beta)
        \text{ has a moment matrix }
        (\phi_{15}\circ\phi_{10}\circ \phi_1)\big(\mc M(k;\beta)\big)
        \\
        &             
        \text{satisfying the mixed type relation in the proposition.}\medskip
    \end{split}
    \end{align*}

    \noindent
    \textbf{Case 2.2: $\widetilde c_1\neq 0$.}
    Dividing the relation in \eqref{relation4-v2} with $\frac{|c_3|}{c_3}$, \eqref{relation4-v2} is equivalent to:
    \begin{align}
    \label{relation5-151023}
    \begin{split}
        &\text{A sequence }
        (\phi_{10}\circ\phi_1)(\beta)
        \text{ has a moment matrix }
        (\phi_{10}\circ\phi_1)\big(\mc M(k;\beta)\big)
        \text{ satisfying the}\\
        &
        \text{column relation }
        \widehat c_0
        Y
        +
        \widehat c_1 XY
        +
        \widehat c_2 Y^2
        +
        X^2Y+
        \widehat c_4 XY^2+
        \widehat c_5Y^3
        =\mathbf{0}
        \text{ with }\widehat c_i\in\RR,
        \widehat c_1\neq 0.
    \end{split}
    \end{align}
    Now we apply the alt 
    $$\phi_{16}(x,y)=\Big(x+\frac{\widehat c_1}{2},y\Big)$$
    to $(\phi_{10}\circ\phi_1)(\beta)$ and obtain:
    \begin{align}
    \label{relation6}
    \begin{split}
        &\text{A sequence }
        (\phi_{16}\circ\phi_{10}\circ\phi_1)(\beta)
        \text{ has a moment matrix }
        (\phi_{16}\circ\phi_{10}\circ\phi_1)\big(\mc M(k;\beta)\big)\\
        &
        \text{satisfying the column relation }
        \breve c_0Y
        +
        \breve c_2 Y^2
        +
        X^2Y+
        \breve c_4 XY^2+
        \breve c_5Y^3
        =\mathbf{0}
        \text{ with }\breve c_i\in\RR.\medskip
    \end{split}
    \end{align}

    \noindent
    \textbf{Case 2.2.1: $\breve c_0= 0$.}
    In this case the relation in \eqref{relation6}
    becomes equal to the relation in \eqref{relation5-v3-2} from the Case 2.1.1, so we can proceed as above.\\
     
    \noindent
    \textbf{Case 2.2.2: $\breve c_0\neq 0$.}
    Dividing the relation in \eqref{relation6} with $\breve c_0$, it 
    becomes equal to the relation in \eqref{relation5-v3} from the Case 2.1.2, so we can proceed as above.
\end{proof}


\section{Solving the TMP on canonical reducible cubic curves}
\label{Section-common-approach}

Let 
$\beta=\{\beta_i\}_{i\in \ZZ_+^2,|i|\leq 2k}$ 
be a sequence of degree $2k$, $k\in \NN$,
and
\begin{equation}
\label{degree-lex}
    \cC=\{\mathit{1},X,Y,X^2,XY,Y^2,\ldots,X^k,X^{k-1}Y,\ldots,Y^k\}
\end{equation}
the set of rows and columns of the moment matrix $\cM(k;\beta)$
in the degree-lexicographic order.
Let 
\begin{equation}
\label{form-of-p}
    p(x,y)=y\cdot c(x,y)\in \RR[x,y]_{\leq 3}
\end{equation}
be a polynomial of degree 3 in one of the canonical forms from Proposition \ref{cases}. Hence, $c(x,y)$ a polynomial of degree 2.
$\beta$
will have a $\cZ(p)$--rm if and only if it can be decomposed as
\begin{equation}
\label{decomposition-v2}
	\beta=\beta^{(\ell)}+\beta^{(c)},
\end{equation} 
where
\begin{align*}
    \beta^{(\ell)}
    &:=
    \{\beta_i^{(\ell)}\}_{i\in \ZZ_+^2,|i|\leq 2k}
    \quad
    \text{has a representing measure on }y=0,\\
    \beta^{(c)}    
    &:=
    \{\beta_i^{(c)}\}_{i\in \ZZ_+^2,|i|\leq 2k}
    \quad
    \text{has a representing measure on the conic }c(x,y)=0,
\end{align*}
and the sum in \eqref{decomposition-v2} is a component-wise sum.
On the level of moment matrices, \eqref{decomposition-v2}
is equivalent to 
\begin{equation}
\label{decomposition-v3}
	\cM(k;\beta)=\cM(k;\beta^{(\ell)})+\cM(k;\beta^{(c)}).
\end{equation}
Note that if $\beta$ has a $\cZ(p)$--rm,
then the matrix $\mc M(k;\beta)$ satisfies the relation 
$p(X,Y)=\mathbf{0}$ and it must be rg, i.e.,
\begin{equation}
    \label{071123-0657}
        X^iY^jp(X,Y)=\mathbf{0}
        \quad
        \text{for }i,j=0,\ldots,k-3\text{ such that }i+j\leq k-3.
\end{equation}

We write $\vec{X}^{(0,k)}:=(\mathit{1},X,\ldots,X^k)$.
Let $\cT\subseteq \cC$ be a subset, such that the columns from $\cT$
span the column space $\cC(\cM(k;\beta))$
and
\begin{align}
    \label{order-general}
    \begin{split}
    &P
    \text{ is a permutation matrix such that moment matrix }
    \widetilde{\mc M}(k;\beta):=P\mc M(k;\beta)P^T\\
    &\text{has rows and columns indexed in the order }
        \vec{X}^{(0,k)},
        \cT\setminus\vec{X}^{(0,k)},
        \cC\setminus (\vec{X}^{(0,k)}\cup \cT).
    \end{split}
    \end{align} 
In this new order of rows and columns, \eqref{decomposition-v3}
becomes equivalent to
\begin{equation}
\label{decomposition-v4}
	\widetilde\cM(k;\beta)=
 \widetilde\cM(k;\beta^{(\ell)})+
 \widetilde\cM(k;\beta^{(c)}).
\end{equation}
We write
    \begin{align}
    \label{071123-1939}
		\widetilde\cM(k;\beta)
		&=
		\kbordermatrix{
		& \vec{X}^{(0,k)}
            & \cT\setminus\vec{X}^{(0,k)}
            & \cC\setminus(\vec{X}^{(0,k)}\cup\cT) \\
			(\vec{X}^{(0,k)})^T & A_{11} & A_{12} & A_{13}\\[0.3em]
		  (\cT\setminus\vec{X}^{(0,k)})^T & (A_{12})^T & A_{22} & A_{23}\\[0.3em]
            (\cC\setminus(\vec{X}^{(0,k)}\cup\cT))^T & (A_{13})^T & (A_{23})^T & A_{33}\\
		}.
    \end{align}
By the form of the atoms, we know that 
$\widetilde\cM(k;\beta^{(\ell)})$ 
    and 
$\widetilde\cM(k;\beta^{(c)})$ 
will be of the forms
    \begin{align}
    \label{071123-0642}
    \begin{split}
		\widetilde\cM(k;\beta^{(c)})
		&=
		\kbordermatrix{
		& \vec{X}^{(0,k)}
            & \cT\setminus\vec{X}^{(0,k)}
            & \cC\setminus(\vec{X}^{(0,k)}\cup\cT) \\
			(\vec{X}^{(0,k)})^T & A & A_{12} & A_{13}\\[0.3em]
		  (\cT\setminus\vec{X}^{(0,k)})^T & (A_{12})^T & A_{22} & A_{23}\\[0.3em]
            (\cC\setminus(\vec{X}^{(0,k)}\cup\cT))^T & (A_{13})^T & (A_{23})^T & A_{33}\\
		},\\[0.5em]
		\widetilde\cM(k;\beta^{(\ell)})
		&=
		\kbordermatrix{
		& \vec{X}^{(0,k)}
            & \cT\setminus\vec{X}^{(0,k)}
            & \cC\setminus(\vec{X}^{(0,k)}\cup\cT) \\
			(\vec{X}^{(0,k)})^T & A_{11}-A & \mathbf{0} & \mathbf{0}\\[0.3em]
		  (\cT\setminus\vec{X}^{(0,k)})^T & \mathbf{0} & \mathbf{0} & \mathbf{0}\\[0.3em]
            (\cC\setminus(\vec{X}^{(0,k)}\cup\cT))^T & \mathbf{0} & \mathbf{0} & \mathbf{0}\\
		}
    \end{split}
    \end{align}
for some Hankel matrix $A\in S_{k+1}.$
    Define matrix functions 
    $\cF:S_{k+1}\to S_{\frac{(k+1)(k+2)}{2}}$ 
    and
    $\cH:S_{k+1}\to S_{k+1}$
    by
    \begin{align}
    \label{071123-1940}
    \cF(\mathbf{A})
		&=
        \begin{pmatrix} 
        \mathbf{A} & A_{12} & A_{13}\\[0.3em]
		   (A_{12})^T & A_{22} & A_{23}\\[0.3em]
             (A_{13})^T & (A_{23})^T & A_{33}
	\end{pmatrix}\quad\text{and}\quad
    \cH(\mathbf{A})=
            A_{11}-\mathbf{A}
        .
    \end{align}
    Using \eqref{071123-0642}, \eqref{decomposition-v4}
    becomes equivalent to
    \begin{equation}
        \label{decomposition-v5}
        \widetilde{\mc M}(k;\beta)
        =
        \cF(A)+\cH(A)\oplus \mathbf{0}_{\frac{k(k+1)}{2}}
    \end{equation}
    for some Hankel matrix $A\in S_{k+1}$.

 	\begin{lemma}	
		\label{071123-0646}
            Assume the notation above.
            The sequence 
		$\beta=\{\beta_i\}_{i\in \ZZ_+^2,|i|\leq 2k}$, 
            where $k\geq 3$, 
            has a $\cZ(p)$--representing measure if and only if there exist a Hankel matrix 
		$A\in S_{k+1}$,
		such that:
		\begin{enumerate}
			\item
                \label{301023-1756-pt1}
                The sequence 
				with the moment matrix $\cF(A)$ has a $\cZ(c)$--representing measure.
			\item
                \label{301023-1756-pt2}
   The sequence with the moment matrix $\cH(A)$ has a $\RR$--representing measure.
		\end{enumerate}
	\end{lemma}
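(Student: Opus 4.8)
The plan is to read \eqref{decomposition-v2}--\eqref{decomposition-v5} backwards. Since $p=y\cdot c(x,y)$ we have $\cZ(p)=\cZ(y)\cup\cZ(c)$, so a $\cZ(p)$-representing measure is precisely a sum of a measure carried by the line $\{y=0\}$ and a measure carried by the conic $\cZ(c)$; and a measure carried by $\{y=0\}$ has $\beta_{i,j}=0$ for every $j\geq 1$, so in the ordering \eqref{order-general} its moment matrix lives only in the leading $\vec{X}^{(0,k)}\times\vec{X}^{(0,k)}$ block. Recall from \eqref{071123-1939} that $A_{11}=(\beta_{a+b,0})_{a,b=0}^k$ is Hankel and that $\cF,\cH$ are the matrix functions of \eqref{071123-1940}. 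I will also use that $\cM(k;\cdot)$ records \emph{every} moment $\beta_{i,j}$ with $i+j\leq 2k$, so that $\widetilde{\cM}(k;\cdot)$ determines the degree-$2k$ sequence uniquely.

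For necessity I would take a $\cZ(p)$-representing measure $\mu$ for $\beta$ and split it as $\mu=\mu_\ell+\mu_c$ with $\mu_\ell:=\mu|_{\{y=0\}}$ and $\mu_c:=\mu|_{\cZ(c)\setminus\{y=0\}}$ (any atom on $\{y=0\}\cap\cZ(c)$ is assigned to $\mu_\ell$); then $\supp\mu_\ell\subseteq\{y=0\}$, $\supp\mu_c\subseteq\cZ(c)$, and their degree-$2k$ moment sequences $\beta^{(\ell)},\beta^{(c)}$ obey $\beta=\beta^{(\ell)}+\beta^{(c)}$, i.e. \eqref{decomposition-v4}. Since $\beta^{(\ell)}_{i,j}=0$ for $j\geq 1$, the matrix $\widetilde{\cM}(k;\beta^{(\ell)})$ has the form in \eqref{071123-0642} with leading block $(\beta^{(\ell)}_{a+b,0})_{a,b=0}^k$, so putting $A:=A_{11}-(\beta^{(\ell)}_{a+b,0})_{a,b=0}^k$ (a difference of Hankel matrices, hence a Hankel matrix in $S_{k+1}$) gives $\cH(A)\oplus\mathbf{0}_{\frac{k(k+1)}{2}}=\widetilde{\cM}(k;\beta^{(\ell)})$, whence by \eqref{decomposition-v4} and the identity \eqref{decomposition-v5} also $\cF(A)=\widetilde{\cM}(k;\beta^{(c)})$. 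Then \eqref{301023-1756-pt1} holds because $\cF(A)$ is the moment matrix of $\beta^{(c)}$ up to the fixed reindexing by $P$, which has the $\cZ(c)$-representing measure $\mu_c$; and \eqref{301023-1756-pt2} holds because the sequence with moment matrix $\cH(A)$ is the $x$-marginal of $\mu_\ell$, which has a $\RR$-representing measure.

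For sufficiency, given a Hankel $A\in S_{k+1}$ satisfying \eqref{301023-1756-pt1} and \eqref{301023-1756-pt2}, I would first check that $\cF(A)=P\,\cM(k;\beta^{(c)})\,P^T$ for the sequence $\beta^{(c)}$ with $\beta^{(c)}_{i,j}=\beta_{i,j}$ for $j\geq 1$ and $\beta^{(c)}_{i,0}$ the $i$-th anti-diagonal entry of $A$: the blocks $A_{12},A_{13},A_{22},A_{23},A_{33}$ are copied from $\widetilde{\cM}(k;\beta)$ and each involves a row or column index from $\cC\setminus\vec{X}^{(0,k)}$ (i.e.\ a monomial $X^aY^b$ with $b\geq 1$), hence records only moments $\beta_{i,j}$ with $j\geq 1$, while the leading block $A$ is Hankel. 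Next take the $\cZ(c)$-representing measure $\mu_c$ for $\beta^{(c)}$ from \eqref{301023-1756-pt1} and the $\RR$-representing measure $\nu$ for the sequence with moment matrix $\cH(A)$ from \eqref{301023-1756-pt2}, and let $\mu_\ell$ be the push-forward of $\nu$ under $t\mapsto(t,0)$; its degree-$2k$ moment sequence $\beta^{(\ell)}$ has $\beta^{(\ell)}_{i,j}=0$ for $j\geq 1$ and leading Hankel block $\cH(A)=A_{11}-A$, so $\widetilde{\cM}(k;\beta^{(\ell)})=\cH(A)\oplus\mathbf{0}_{\frac{k(k+1)}{2}}$. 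Since \eqref{decomposition-v5} is an algebraic identity valid for every $A\in S_{k+1}$, summing gives $\widetilde{\cM}(k;\beta)=\widetilde{\cM}(k;\beta^{(c)})+\widetilde{\cM}(k;\beta^{(\ell)})$, hence $\beta=\beta^{(c)}+\beta^{(\ell)}$, and $\mu:=\mu_\ell+\mu_c$ is a positive Borel measure with $\supp\mu\subseteq\{y=0\}\cup\cZ(c)=\cZ(p)$ whose moments up to degree $2k$ coincide with $\beta$.

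I do not expect a genuine obstacle: the statement merely repackages \eqref{071123-0642}--\eqref{decomposition-v5}. The two points needing care are, in the sufficiency direction, verifying that $\cF(A)$ is a bona fide moment matrix — which reduces to the observation that the blocks $A_{12},\dots,A_{33}$ carry only moments $\beta_{i,j}$ with $j\geq 1$, so all the freedom in the decomposition sits in the $x$-axis moments $\beta_{i,0}$ encoded by the Hankel parameter $A$ — and, in the necessity direction, the harmless bookkeeping of the atoms of $\mu$ lying on $\{y=0\}\cap\cZ(c)$.
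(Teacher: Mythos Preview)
Your proposal is correct and follows essentially the same approach as the paper: split a $\cZ(p)$-representing measure into its restrictions to $\{y=0\}$ and $\cZ(c)$ for necessity, and sum the two given measures for sufficiency. You are in fact a bit more careful than the paper about the bookkeeping of atoms on $\{y=0\}\cap\cZ(c)$ and about checking that $\cF(A)$ is a genuine moment matrix, but the argument is otherwise identical.
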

 
\begin{proof}
First we prove the implication 
$(\Rightarrow)$.
If $\beta$ has a $\cZ(p)$--rm $\mu$, then $\mu$ is supported on the union of the line
	$y=0$ and the conic $c(x,y)=0$.
    Since the moment matrix, generated by the measure supported on $y=0$, can be nonzero only
	when restricted to the columns and rows indexed by ${\vec{X}}^{(0,k)}$, it follows that the
	moment matrix generated by the restriction 
        $\mu|_{\{c=0\}}$ (resp.\ $\mu|_{\{y=0\}}$) of the measure $\mu$
        to the conic $c(x,y)=0$ (resp.\ line $y=0$),
	is of the form 	$\cF(A)$ (resp.\ 
	$\cH(A)\oplus \mathbf{0}_{\frac{k(k+1)}{2}}$) for some Hankel matrix 
	$A\in S_{k+1}$.

 It remains to establish the implication 
 $(\Leftarrow)$.
	Let $\mc M^{(c)}(k)$ (resp.\ $\mc M^{(\ell)}(k)$) be the moment matrix generated by the measure $\mu_1$ (resp.\ $\mu_2$) supported on $\cZ(c)$ (resp.\ $y=0$)
	such that 
 \begin{align}
\label{071123-0717}
 P\mc M^{(c)}(k)P^T
 &=\cF(A),\quad
 P\mc M^{(\ell)}(k)P^T
 =\cH(A)\oplus \mathbf{0}_{\frac{k(k+1)}{2}},
 \end{align}
 respectively,
	where $P$ is as in 
 \eqref{order-general}.
     The equalities \eqref{071123-0717} imply 
     that $\cM(k;\beta)=\cM^{(c)}(k)+\cM^{(\ell)}(k;\beta)$.
     Since the measure $\mu_1+\mu_2$
     is supported on the curve $\cZ(c)\cup \{y=0\}=\cZ(p)$,
     the implication $(\Leftarrow)$ holds.
\end{proof}

\begin{lemma}
    \label{071123-1942}
    Assume the notation above
    and let the sequence 
	$\beta=\{\beta_i\}_{i\in \ZZ_+^2,|i|\leq 2k}$, where $k\geq 3$, 
    admit a $\cZ(p)$--representing measure.
    Let $A:=A_{\big(\beta_{0,0}^{(c)},\beta_{1,0}^{(c)},\ldots,\beta_{2k,0}^{(c)}\big)}\in S_{k+1}$ be a Hankel matrix such that $\cF(A)$ admits a $\cZ(c)$--representing measure and $\cH(A)$ admits a $\RR$--representing measure.
    Let $c(x,y)$ be of the form
        \begin{align}
        \label{081123-1004}
        \begin{split}
        &c(x,y)
        =a_{00}+a_{10}x+a_{20}x^2+a_{01}y+a_{02}y^2+a_{11}xy\quad
        \text{with }a_{ij}\in \RR\\
        &\text{and exactly one of the coefficients }a_{00},a_{10},a_{20}\text{ is nonzero}.
        \end{split}
        \end{align}
    If:
    \begin{enumerate}
        \item
        \label{071123-1942-pt1}
        $a_{00}\neq 0$, then 
            $$
            \beta_{i,0}^{(c)}
            =
            -\frac{1}{a_{00}}
            (a_{01}\beta_{i,1}+a_{02}\beta_{i,2}+a_{11}\beta_{i+1,1})
            \quad
            \text{for }i=0,\ldots,2k-2.
            $$
        \item 
        \label{071123-1942-pt2}
        $a_{10}\neq 0$, then 
            $$
            \beta_{i,0}^{(c)}
            =
            -\frac{1}{a_{10}}
            (a_{01}\beta_{i,1}+a_{02}\beta_{i,2}+a_{11}\beta_{i+1,1})
            \quad
            \text{for }i=1,\ldots,2k-1.
            $$
        \item 
        \label{071123-1942-pt3}
        $a_{20}\neq 0$, then 
            $$
            \beta_{i,0}^{(c)}
            =
            -\frac{1}{a_{20}}
            (a_{01}\beta_{i,1}+a_{02}\beta_{i,2}+a_{11}\beta_{i+1,1})
            \quad
            \text{for }i=2,\ldots,2k.
            $$
    \end{enumerate}
\end{lemma}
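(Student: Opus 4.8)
The plan is to read off all three formulas from the column relations $x^{j}c(X,Y)=\mathbf 0$ of $\cF(A)$, $j=0,\dots,k-2$, along the rows indexed by pure powers of $X$.

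First I would pin down the entries of $\cF(A)=\cM(k;\beta^{(c)})$. By the block description \eqref{071123-0642} and the definitions \eqref{071123-1940}, $\cF(A)$ agrees with $\cM(k;\beta)$ outside the block indexed by $\vec{X}^{(0,k)}\times\vec{X}^{(0,k)}$ and equals $A$ inside it; hence $\beta^{(c)}_{i,j}=\beta_{i,j}$ whenever $j\ge 1$ (within the admissible degree range), while $\beta^{(c)}_{i,0}$, $i=0,\dots,2k$, are exactly the entries of the Hankel matrix $A$. Now fix a $\cZ(c)$–representing measure $\mu_{1}$ of $\cF(A)$ (one exists by hypothesis). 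Since $\supp\mu_{1}\subseteq\cZ(c)\subseteq\cZ(x^{j}c)$ and $\deg(x^{j}c)=j+2\le k$ for $j\le k-2$, \cite{CF96} gives that $x^{j}c(X,Y)=\mathbf 0$ is a column relation of $\cF(A)$ for every $j=0,\dots,k-2$; this is the point where $k\ge 3$ is used, and it is precisely these relations — not merely $c(X,Y)=\mathbf 0$ itself — that are needed to reach the asserted index ranges.

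Then I would write out, for $a=0,\dots,k$ and $j=0,\dots,k-2$, the entry of the column vector $x^{j}c(X,Y)$ in the row indexed by $X^{a}$. With the moment–matrix convention $(\cM(k))_{X^{a}Y^{b},X^{c}Y^{d}}=\beta_{a+c,b+d}$, this entry equals
\[
a_{00}\beta^{(c)}_{a+j,0}+a_{10}\beta^{(c)}_{a+j+1,0}+a_{20}\beta^{(c)}_{a+j+2,0}+a_{01}\beta^{(c)}_{a+j,1}+a_{02}\beta^{(c)}_{a+j,2}+a_{11}\beta^{(c)}_{a+j+1,1}=0.
\]
Using $\beta^{(c)}_{\bullet,1}=\beta_{\bullet,1}$ and $\beta^{(c)}_{\bullet,2}=\beta_{\bullet,2}$, and isolating the pure–power moment carrying the unique nonzero coefficient among $a_{00},a_{10},a_{20}$ — dividing through by that coefficient — produces, after the re–indexing $m:=a+j$ (case $a_{00}\ne 0$), $m:=a+j+1$ (case $a_{10}\ne 0$), $m:=a+j+2$ (case $a_{20}\ne 0$), the claimed identity for $\beta^{(c)}_{m,0}$. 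The index window is in each case the image of $\{(a,j):0\le a\le k,\ 0\le j\le k-2\}$ under that map, i.e.\ $\{0,\dots,2k-2\}$, $\{1,\dots,2k-1\}$, $\{2,\dots,2k\}$ respectively, and one checks that for every $m$ in this window all moments $\beta_{\bullet,1},\beta_{\bullet,2}$ occurring on the right-hand side have total degree $\le 2k$, hence are genuinely prescribed by $\beta$.

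The only delicate point is this degree bookkeeping: one must simultaneously (a) have the recursively generated relations available up to $j=k-2$, (b) evaluate them only at rows $X^{a}$ with $a\le k$, and (c) see that (a)--(b) force the right-hand sides to call on no moment of total degree exceeding $2k$ — which is exactly what trims the three windows down to $2k-2$, $2k-1$, $2k$. Everything else is direct substitution. An equivalent, slightly more hands-on variant bypasses column relations altogether: integrate $x^{m}c$, $x^{m-1}c$ or $x^{m-2}c$ — each of total degree $\le 2k$ in the relevant range — against $\mu_{1}$, using $c\equiv 0$ on $\supp\mu_{1}$; the same constraint on the exponents is what keeps $\beta^{(c)}_{m,0}$ from being expressed through an undetermined higher moment.
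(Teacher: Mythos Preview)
Your proposal is correct and follows essentially the same approach as the paper: both use that the $\cZ(c)$--representing measure forces the column relations $X^{j}c(X,Y)=\mathbf{0}$ in $\cF(A)$, read these relations along the rows indexed by $\mathit{1},X,\ldots,X^{k}$, and then replace $\beta^{(c)}_{i,1},\beta^{(c)}_{i,2}$ by $\beta_{i,1},\beta_{i,2}$ using the block structure of $\cF(A)$. The paper is slightly terser (it only writes out the relations for $j=0$ and $j=k-2$, which already cover the full index range), whereas you run over all $j=0,\ldots,k-2$ and also note the equivalent direct-integration variant, but the argument is the same.
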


\begin{proof}
    By Lemma \ref{071123-0646}, $\cF(A)$ has a $\cZ(c)$--rm for
    some Hankel matrix $A\in S_{k+1}$. Hence, $\cF(A)$ satisfies the rg
    relations $X^iY^jc(X,Y)=\mathbf{0}$ for $i,j\in \ZZ_+$, $i+j\leq k-2$.
    Let us assume that $a_{00}\neq 0$ and $a_{10}=a_{20}=0$. In particular,
    $\cF(A)$ satisfies the relations
    \begin{align}
    \begin{split}
    \label{071123-1936}
        a_{00}\mathit{1}+a_{01}Y+a_{02}Y^2+a_{11}XY&=\mathbf{0},\\
        a_{00}X^{k-2}+a_{01}X^{k-2}Y+a_{02}X^{k-2}Y^2+a_{11}X^{k-1}Y&=\mathbf{0}.     
    \end{split}
    \end{align}
    Observing the rows $\mathit{1},X,\ldots,X^k$ of $\cF(A)$, the relations \eqref{071123-1936}
    imply that
    \begin{equation}
        \label{071123-1938}
            \beta_{i,0}^{(c)}
            =
            -\frac{1}{a_{00}}
            \big(a_{01}\beta_{i,1}^{(c)}+a_{02}\beta_{i,2}^{(c)}+a_{11}\beta_{i+1,1}^{(c)}\big)
            \quad
            \text{for }i=0,\ldots,2k-2.
    \end{equation}
    Using the forms of $\widetilde\cM(k;\beta)$ and $\cF(A)$ (see \eqref{071123-1939}
    and \eqref{071123-1940}), it follows that 
        $\beta_{i,1}^{(c)}=\beta_{i,1}$
    and 
        $\beta_{j,2}^{(c)}=\beta_{j,2}$
    for each $i,j$. Using this in \eqref{071123-1938} proves the statement 
    \eqref{071123-1942-pt1} of the lemma.
    The proofs of the statements \eqref{071123-1942-pt2} and \eqref{071123-1942-pt3} are analogous.
\end{proof}

Lemma \ref{071123-1942} states that for all canonical relations from Proposition \ref{cases} except for the mixed type relation, all but two entries of the Hankel matrix $A$ from Lemma \ref{071123-0646} are uniquely determined by $\beta$.
The following lemma gives the smallest candidate for $A$ in Lemma \ref{071123-0646} with respect to the usual Loewner order of matrices.

\begin{lemma}
	\label{071123-2008}
    Assume the notation above
    and let
	$\beta=\{\beta_i\}_{i\in \ZZ_+^2,|i|\leq 2k}$, where $k\geq 3$, be a sequence of degree $2k$.
    Assume that $\widetilde{\mc{M}}(k;\beta)$ is positive semidefinite and 
    satisfies the column relations \eqref{071123-0657}.
		Then:
		\begin{enumerate}	
                \item 
                    \label{071123-2008-pt0}
                    $\cF(A)\succeq 0$ for some $A\in S_{k+1}$
                    if and only if
                    $A\succeq A_{12}(A_{22})^{\dagger} (A_{12})^T$.
                \medskip
			\item
				\label{071123-2008-pt1}
					$\cF\big(A_{12}(A_{22})^{\dagger} (A_{12})^T\big)\succeq 0$ 
					and 	
					$\cH\big(A_{12}(A_{22})^{\dagger} (A_{12})^T\big) \succeq 0$.
                \medskip
			\item
				\label{071123-2008-pt2} 
					$\cF\big(A_{12}(A_{22})^{\dagger} (A_{12})^T\big)$ satisfies the column relations
                    $X^iY^jc(X,Y)=0$
                    for $i,j\in \ZZ_+$ such that $i+j\leq k-2$.
                \medskip
			\item 	
				\label{071123-2008-pt3}
                    We have that
                        \begin{align*}
                        \Rank \widetilde{\mc M}(k;\beta)
                        &=
                        \Rank A_{22}+
                        \Rank \big(A_{11}-A_{12}(A_{22})^{\dagger} (A_{12})^T\big)\\
                        &=\Rank \cF\big(A_{12}(A_{22})^{\dagger} (A_{12})^T\big)+
                        \Rank \cH\big(A_{12}(A_{22})^{\dagger} (A_{12})^T\big).
                        \end{align*}
		\end{enumerate}
\end{lemma}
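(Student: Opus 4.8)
Write $A_{0}:=A_{12}(A_{22})^{\dagger}(A_{12})^{T}$, $B:=\begin{pmatrix}A_{12}&A_{13}\end{pmatrix}$, $C:=\begin{pmatrix}A_{22}&A_{23}\\(A_{23})^{T}&A_{33}\end{pmatrix}$, so that $\cF(\mathbf{A})=\begin{pmatrix}\mathbf{A}&B\\B^{T}&C\end{pmatrix}$ and $\widetilde{\mc M}(k;\beta)=\cF(A_{11})$. The whole argument rests on one structural observation: since $p=y\,c(x,y)$, every relation in \eqref{071123-0657} has the form $X^{i}Y^{j+1}c(X,Y)=\mathbf{0}$, so the column relations it recursively generates involve only columns indexed by monomials divisible by $y$; inspecting the canonical shapes of $c$ in Proposition \ref{cases} together with the concrete choice of $\cT$, this forces every column of $\widetilde{\mc M}(k;\beta)$ indexed by a monomial in $\cC\setminus(\vec{X}^{(0,k)}\cup\cT)$ to lie already in the span of the columns indexed by $\cT\setminus\vec{X}^{(0,k)}$. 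In a Gram factorization $\widetilde{\mc M}(k;\beta)=G^{T}G$, $G=\begin{pmatrix}U&W&Z\end{pmatrix}$ with $U=[g_{m}]_{m\in\vec{X}^{(0,k)}}$, $W=[g_{t}]_{t\in\cT\setminus\vec{X}^{(0,k)}}$, $Z=[g_{s}]_{s\in\cC\setminus(\vec{X}^{(0,k)}\cup\cT)}$, this says $\cC(Z)\subseteq\cC(W)$; in particular $A_{13}=A_{12}(A_{22})^{\dagger}A_{23}$ and $\Rank C=\Rank A_{22}$.

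I would first prove the positivity statements. For $\cH(A_{0})$: $A_{11}-A_{0}$ is the generalized Schur complement of $A_{22}$ in the principal submatrix $\begin{pmatrix}A_{11}&A_{12}\\(A_{12})^{T}&A_{22}\end{pmatrix}$ of $\widetilde{\mc M}(k;\beta)\succeq0$, hence $\succeq0$ by Theorem \ref{block-psd}. For $\cF(A_{0})$: set $U':=P_{\cC(W)}U$ (orthogonal projection applied columnwise) and $G':=\begin{pmatrix}U'&W&Z\end{pmatrix}$; using $W(W^{T}W)^{\dagger}W^{T}=P_{\cC(W)}$ and $\cC(Z)\subseteq\cC(W)$ one checks block by block that $G'^{T}G'=\cF(A_{0})$, so $\cF(A_{0})=G'^{T}G'\succeq0$. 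Given this, the equivalence $\cF(A)\succeq0\Leftrightarrow A\succeq A_{0}$ is immediate: $(\Leftarrow)$ because $\cF(A)=\cF(A_{0})+(A-A_{0})\oplus\mathbf{0}\succeq\cF(A_{0})\succeq0$; $(\Rightarrow)$ because $\begin{pmatrix}A&A_{12}\\(A_{12})^{T}&A_{22}\end{pmatrix}$ is a principal submatrix of $\cF(A)$, so Theorem \ref{block-psd} gives $A\succeq A_{12}(A_{22})^{\dagger}(A_{12})^{T}=A_{0}$. (Alternatively one may get $\cF(A_{0})\succeq0$ from Theorem \ref{block-psd} applied to $\begin{pmatrix}A_{0}&B\\B^{T}&C\end{pmatrix}$: $C\succeq0$ and $\cC(B^{T})\subseteq\cC(C)$ are inherited from $\widetilde{\mc M}(k;\beta)$, and $A_{13}=A_{12}(A_{22})^{\dagger}A_{23}$ together with the quotient formula for generalized Schur complements, used as in the proof of Lemma \ref{psd-completion}, yields $BC^{\dagger}B^{T}=A_{0}$, so the remaining condition holds with equality.)

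Next, the column relations of $\cF(A_{0})$. Because $\cF$ alters only entries in rows and columns indexed by $\vec{X}^{(0,k)}$, the relations $X^{i}Y^{j}c(X,Y)=\mathbf{0}$ with $j\geq1$ and $i+j\leq k-2$ — exactly \eqref{071123-0657} rewritten — survive in $\cF(A_{0})$. For the remaining cases $j=0$: the column of $\cF(A_{0})$ representing $X^{i}c(X,Y)$, $i\leq k-2$, vanishes in every row indexed by a $y$-divisible monomial (again by \eqref{071123-0657}); since by the structural observation all Gram vectors of $G'$ lie in $\cC(W)$, this column is $G'^{T}g$ for some $g\in\cC(W)$, and $g\perp g_{t}$ for all $t\in\cT\setminus\vec{X}^{(0,k)}$ forces $g\perp\cC(W)$, hence $g=\mathbf{0}$ and the column is $\mathbf{0}$.

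Finally, the rank formula. Theorem \ref{block-psd} applied to $\widetilde{\mc M}(k;\beta)=\begin{pmatrix}A_{11}&B\\B^{T}&C\end{pmatrix}$ gives $\Rank\widetilde{\mc M}(k;\beta)=\Rank C+\Rank(\widetilde{\mc M}(k;\beta)/C)$; by the structural observation $\Rank C=\Rank A_{22}$ and $\widetilde{\mc M}(k;\beta)/C=A_{11}-BC^{\dagger}B^{T}=A_{11}-A_{0}=\cH(A_{0})$, which is the first equality, and the second follows because $\cC(\cF(A_{0}))$ equals the span of the columns of $\cF(A_{0})$ indexed by $\cT\setminus\vec{X}^{(0,k)}$, so $\Rank\cF(A_{0})=\dim\cC(W)=\Rank A_{22}$. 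The one step that requires genuine care is the structural observation itself: one must verify, for each canonical $c$ from Proposition \ref{cases} and the chosen $\cT$, that recursive generation from the $y$-divisible relations \eqref{071123-0657} never forces a column indexed by a $y$-divisible monomial to depend on a column indexed by a pure power of $x$; once this is in hand, everything else is a routine application of Albert's theorem and of generalized Schur complement identities.
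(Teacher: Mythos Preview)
Your proof is correct and follows essentially the same route as the paper. Both arguments hinge on the same structural observation---that the columns indexed by $\cC\setminus(\vec{X}^{(0,k)}\cup\cT)$ lie in the span of those indexed by $\cT\setminus\vec{X}^{(0,k)}$---and use it in the same way: to show $\cF(A_0)\succeq 0$, the paper writes $\cF(A_0)=\begin{pmatrix}X^{T}\\I\\W^{T}\end{pmatrix}A_{22}\begin{pmatrix}X&I&W\end{pmatrix}$ for suitable $X,W$, while your Gram factorization $\cF(A_0)=G'^{T}G'$ with $G'=\begin{pmatrix}P_{\cC(W)}U&W&Z\end{pmatrix}$ is exactly this congruence after choosing a square root of $A_{22}$. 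Your identity $BC^{\dagger}B^{T}=U^{T}P_{\cC(W)}U=A_0$ is the paper's Claim $A_{\min}=\widetilde{A}_{\min}$, and your orthogonality argument for the $j=0$ relations in part~(3) is a Gram-vector phrasing of the extension principle (Proposition~\ref{extension-principle}) that the paper invokes. Your explicit flagging of the verification needed for the structural observation (for each canonical $c$ and the chosen $\cT$) is appropriate; the paper handles it with the brief justification around \eqref{081123-0749}.
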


\begin{proof}
By the equivalence between \eqref{pt1-281021-2128} and \eqref{pt2-281021-2128} of Theorem \ref{block-psd}
used for $(M,A)=(\widetilde{\mc M}(k;\beta),A_{11})$ 
        and 
        $(M,A)=(\big(\widetilde{\mc M}(k;\beta)\big)_{\vec{X}^{(0,k)}\cup\cT},A_{11})$,
it follows in particular that
\begin{align}
     \label{081123-0743}
     \begin{split}
        \cC\big(
        \begin{pmatrix}
            (A_{12})^T \\
            (A_{13})^T
        \end{pmatrix}
        \big)
        &\subseteq
        \cC\big(
        \begin{pmatrix}
            A_{22} & A_{23} \\[0.3em]
            (A_{23})^T & A_{33}
        \end{pmatrix}
        \big),\\[0.3em]
        \cC(A_{12}^T)&\subseteq \cC(A_{22}).
    \end{split}
\end{align}
and
\begin{equation}
     \label{081123-0731}
        \cH(A_{\min})\succeq 0,
\end{equation}
where
$$
A_{\min}:=
 \begin{pmatrix}
 A_{12} & A_{13}    
 \end{pmatrix}
 \begin{pmatrix}
     A_{22} & A_{23} \\[0.3em]
     (A_{23})^T & A_{33}
 \end{pmatrix}^{\dagger}
 \begin{pmatrix}
     (A_{12})^T \\ (A_{13})^T
 \end{pmatrix}.
$$
Using the equivalence between \eqref{pt1-281021-2128} and \eqref{pt2-281021-2128} of Theorem \ref{block-psd}
again for the pairs $(M,A)=(\cF(A),A)$ 
        and 
        $(M,A)=(\big(\cF(A)\big)_{\vec{X}^{(0,k)}\cup\cT},A)$, it follows that
 \begin{align}
 \label{081123-0729} 
 \begin{split}
 \cF(A)\succeq 0 \quad &\Leftrightarrow \quad 
 A\succeq A_{\min},\\[0.5em]
\big(\cF(A)\big)_{\vec{X}^{(0,k)}\cup\cT}\succeq 0
  \quad &\Leftrightarrow \quad 
  A\succeq 
 A_{12}
     (A_{22})^{\dagger}
     (A_{12})^T 
 =:\widetilde A_{\min}.
 \end{split}
 \end{align}
 Since $\cF(A)\succeq 0$ implies, in particular, that $\big(\cF(A)\big)_{\vec{X}^{(0,k)}\cup\cT}\succeq 0$, 
 \eqref{081123-0729} implies that 
 \begin{equation}
 \label{081123-0736}
    A_{\min}\succeq \widetilde A_{\min}.  \smallskip 
 \end{equation}

 \noindent\textbf{Claim.} $A_{\min}= \widetilde A_{\min}$.\\

 \noindent\textit{Proof of Claim.} 
 By \eqref{081123-0729} and \eqref{081123-0736}, it suffices to prove that 
 $\cF(\widetilde A_{\min})\succeq 0$.
 By definition of $\cT$ and 
 the relations $X^iY^jp(X,Y)=X^iY^{j+1}c(X,Y)=\mathbf{0}$, $i,j\in \ZZ_+,i+j\leq k-3$,
 which hold in $\widetilde{\cM}(k;\beta)$,
 it follows, in particular, that
 \begin{equation}
     \label{081123-0749}
        \cC\big(
        \begin{pmatrix}
            A_{23} \\
            A_{33}
        \end{pmatrix}
        \big)
        \subseteq
        \cC\big(
        \begin{pmatrix}
            A_{22}  \\[0.3em]
            (A_{23})^T
        \end{pmatrix}
        \big)
\end{equation}
 \eqref{081123-0743} and \eqref{081123-0749} together imply that 
  \begin{equation}
     \label{081123-0750}
        \cC\big(
        \begin{pmatrix}
            (A_{12})^T \\
            (A_{13})^T
        \end{pmatrix}
        \big)
        \subseteq
        \cC\big(
        \begin{pmatrix}
            A_{22}  \\[0.3em]
            (A_{23})^T
        \end{pmatrix}
        \big).
\end{equation}
 \eqref{081123-0743} and \eqref{081123-0750}
 can be equivalently expressed as
\begin{align}
\label{081123-0904}
\begin{split}
        \begin{pmatrix}
            A_{22}  \\[0.3em]
            (A_{23})^T
        \end{pmatrix}
        W&=
        \begin{pmatrix}
            A_{23} \\
            A_{33}
        \end{pmatrix}
        \;\text{for some matrix }W,\\[0.3em]
        \begin{pmatrix}
            A_{22}  \\[0.3em]
            (A_{23})^T
        \end{pmatrix}
        X&=
        \begin{pmatrix}
            (A_{12})^T \\
            (A_{13})^T
        \end{pmatrix}
        \;\text{for some matrix }X.
\end{split}
\end{align}
We have that
\begin{align*}
    0
    &\preceq \begin{pmatrix}
        X^T\\
        I\\
        W^T
    \end{pmatrix}
    A_{22}
    \begin{pmatrix}
        X & I & W
    \end{pmatrix}\\[0.3em]
    &=
    \begin{pmatrix}
        X^TA_{22}X & X^T A_{22} & X^TA_{22}W\\[0.3em]
        A_{22}X & A_{22} & A_{22}W\\[0.3em]
        W^TA_{22}X & W^TA_{22} & W^T A_{22} W 
    \end{pmatrix}
    \\[0.3em]
    &
    =
    \begin{pmatrix}
        A_{12}(A_{22})^{\dagger}(A_{12})^T 
            & A_{12} & A_{13}\\[0.3em]
        (A_{12})^T & A_{22} & A_{23}\\[0.3em]
        (A_{13})^T & (A_{23})^T & A_{33} 
    \end{pmatrix}
    =
    \cF(\widetilde A_{\min})
\end{align*}
where $I$ is the identity matrix of the same size as $A_{22}$
and 
we used \eqref{081123-0904} in the second equality.
This proves the Claim.\hfill\hfill$\blacksquare$\\

\noindent Using \eqref{081123-0731},
\eqref{081123-0729} and Claim,
the statements \eqref{071123-2008-pt0}
and \eqref{071123-2008-pt1} follow. 
By Theorem \ref{block-psd}.\eqref{prop-2604-1140-eq2},
used for $(M,A)=(\widetilde{\cM}(k;\beta),A_{11})$, we have that 
\begin{align}
\label{081123-0946}
\begin{split}
    \Rank \widetilde{\mc M}(k;\beta)
    &=
    \Rank 
        \begin{pmatrix}
            A_{22} & A_{23} \\[0.3em]
            (A_{23})^T & A_{33}
        \end{pmatrix}
    +
    \Rank \cH(A_{\min})\\[0.3em]
   &=
    \Rank 
        \cF(A_{\min})
    +
    \Rank \cH(A_{\min}).
\end{split}
\end{align}
By \eqref{081123-0749} and        
        $$
        B:=
        \begin{pmatrix}
            A_{22} & A_{23} \\[0.3em]
            (A_{23})^T & A_{33}
        \end{pmatrix}
        \succeq 0,
        $$
it follows by Theorem \ref{block-psd}, used for $(M,A)=(B,A_{22})$,
that $\Rank B=\Rank A_{22}$. Using this and the Claim,
\eqref{081123-0946} implies the statement \eqref{071123-2008-pt3}.

	Since $\widetilde{\mc M}(k;\beta)$ satisfies 
        the relations \eqref{071123-0657}, it follows that
	the restriction 	
        $\big(\cF(\widetilde A_{\min})\big)_{\cC\setminus \vec{X}^{(0,k)},\cC}
        $
        satisfies the column relations
        $X^iY^jc(X,Y)=\mathbf{0}$
                    for $i,j\in \ZZ_+$ such that $i+j\leq k-2$. 
	By Proposition \ref{extension-principle},
        these relations extend to 
        $\cF(\widetilde A_{\min})$,
        which proves 
        \eqref{071123-2008-pt2}.
\end{proof}

\begin{remark}
\label{general-procedure}
By Lemmas \ref{071123-0646}--\ref{071123-2008}, 
solving the $\cZ(p)$--TMP for the sequence 
    $\beta=\{\beta_i\}_{i\in \ZZ_+^2,|i|\leq 2k}$, 
where $k\geq 3$, 
with $p$ being any but the mixed type relation from Proposition \ref{cases}, 
the natural procedure is the following:
\begin{enumerate}
    \item First compute $A_{\min}:=A_{12}(A_{22})^{\dagger}A_{12}$.
        By Lemma \ref{071123-2008}.\eqref{071123-2008-pt2}, there is one entry
        of $A_{\min}$,
        which might need to be changed to obtain a Hankel structure. 
        Namely, in the notation \eqref{081123-1004},
        if:
        \begin{enumerate}
            \item $a_{00}\neq 0$, then the value of $(A_{\min})_{k,k}$ must be made equal to $(A_{\min})_{k-1,k+1}$.
            \item $a_{10}\neq 0$, then the value of $(A_{\min})_{1,k+1}$   
                must be made equal to $(A_{\min})_{2,k}$.
            \item $a_{20}\neq 0$, then the value of $(A_{\min})_{2,2}$ must be made equal to $(A_{\min})_{3,1}$.
        \end{enumerate}
        Let $\widehat A_{\min}$ be  the matrix obtained from $A_{\min}$ after performing the changes described above.
    \smallskip
    \item Study if $\cF(\widehat A_{\min})$ and $\cH(\widehat A_{\min})$ admit a $\cZ(c)$--rm 
        and a $\RR$--rm, respectively.
        If the answer is yes, $\beta$ admits a $\cZ(p)$--rm. 
        Otherwise by Lemma \ref{071123-1942},
        there are two antidiagonals of the Hankel matrix $\widehat A_{\min}$, 
        which can by varied so that the matrices $\cF(\widehat A_{\min})$ and $\cH(\widehat A_{\min})$
        will admit the corresponding measures. 
        Namely, in the notation \eqref{081123-1004},
        if:
        \begin{enumerate}
            \item $a_{00}\neq 0$, then the last two antidiagonals of $\widehat A_{\min}$ 
            can be changed.
            \item $a_{10}\neq 0$, then the left--upper and the right--lower corner of $\widehat A_{\min}$ 
            can be changed.
            \item $a_{20}\neq 0$, then the first two antidiagonals of $\widehat A_{\min}$ 
            can be changed.
        \end{enumerate}
        To solve the $\cZ(p)$--TMP for $\beta$ one needs to characterize, when it is possible to change these antidiagonals in such a way
        to obtain a matrix $\breve A_{\min}$, such that 
        $\cF(\breve A_{\min})$ and $\cH(\breve A_{\min})$ admit a $\cZ(c)$--rm 
        and a $\RR$--rm, respectively.
\end{enumerate}
\end{remark}

In Sections \ref{circular} and \ref{parabolic} we solve concretely the TMP on reducible cubic curves in the circular and parabolic type form (see the classification from Proposition \ref{cases}).
The parallel lines type form was solved in \cite{Zal22a}, while
the hyperbolic type forms will be solved in the forthcoming work \cite{YZ+}.


\section{Circular type relation: 
            $p(x,y)=y(ay+x^2+y^2)$, 
            $a\notin \RR\setminus\{0\}$.
        }
\label{circular}

In this section we solve the $\cZ(p)$--TMP for 
the sequence $\beta=\{\beta_{i,j}\}_{i,j\in \ZZ_+,i+j\leq 2k}$ 
of degree $2k$, $k\geq 3$,
where $p(x,y)=y(ay+x^2+y^2)$, $a\in \RR\setminus\{0\}$.
Assume the notation from Section \ref{Section-common-approach}.
If $\beta$ admits a $\cZ(p)$--TMP, then $\mc M(k;\beta)$ 
must satisfy the relations
\begin{equation}
    \label{221023-1844}
	aY^{2+j}X^{i}
        +
        Y^{1+j}X^{2+i}
        =
        -Y^{3+j}X^{i}\quad
        \text{for }i,j\in \ZZ_+\text{ such that }i+j\leq k-3.
\end{equation}
	In the presence of all column relations \eqref{221023-1844}, the column space $\cC(\mc M(k;\beta))$ is spanned by the columns in the set
    \begin{equation}
    \label{221023-1848}
    \cT=
        \vec{X}^{(0,k)}
        \cup 
        Y\vec{X}^{(0,k-1)}
        \cup
        Y^2\vec{X}^{(0,k-2)},
    \end{equation}
    where 
    $$
        Y^i\vec{X}^{(j,\ell)}:=(Y^iX^j,Y^iX^{j+1},\ldots,Y^iX^{\ell})
        \quad\text{with }i,j,\ell\in \ZZ_+,\; j\leq \ell,\; i+\ell\leq k.
    $$
    Let $\widetilde \cM(k;\beta)$ 
    be as in 
    \eqref{071123-0642}.
    Let 
    \begin{equation}
        \label{091123-0719}
        A_{\min}:=A_{12}(A_{22})^{\dagger} (A_{12})^T.
    \end{equation}
    As described in Remark \ref{general-procedure},
    $A_{\min}$ might need to be changed to
    $$
    \widehat A_{\min}
    =A_{\min}+\eta E_{2,2}^{(k+1)},
    $$
where
    $$
    \eta:=(A_{\min})_{1,3}-(A_{\min})_{2,2}.
    $$
    Let $\cF(\mathbf{A})$ and $\cH(\mathbf{A})$ 
    be as in 
    \eqref{071123-1940}.
Write
\begin{align}
\label{081123-1936}
\begin{split}
        \cH(\widehat A_{\min})
        &:=
            \kbordermatrix{ 
                & \mathit{1} & X& \vec{X}^{(2,k)}\\
            \mathit{1}  & \beta_{0,0}-(A_{\min})_{1,1} & \beta_{1,0}-(A_{\min})_{1,2} & (h_{12}^{(1)})^T\\[0.2em]
            X
                & \beta_{1,0}-(A_{\min})_{1,2}& \beta_{2,0}-(A_{\min})_{1,3} & (h_{12}^{(2)})^T\\[0.2em]
            (\vec{X}^{(2,k)})^T & 
            h_{12}^{(1)} &
            h_{12}^{(2)} &
            H_{22}},\\[0.5em]
H_1&:=(\cH(\widehat A_{\min}))_{\{1\}\cup\vec{X}^{(2,k)}}=
\kbordermatrix{
    & \mathit{1} & \vec{X}^{(2,k)}\\[0.2em]
    \mathit{1} & \beta_{0,0}-(A_{\min})_{1,1} & (h_{12}^{(1)})^T\\[0.2em]
    (\vec{X}^{(2,k)})^T & h_{12}^{(1)} & H_{22}},\\[0.5em]
H_2&:=(\cH(\widehat A_{\min}))_{\vec{X}^{(1,k)}}=
\kbordermatrix{
    & X & \vec{X}^{(2,k)}\\[0.2em]
    X & \beta_{2,0}-(A_{\min})_{1,3} & (h_{12}^{(2)})^T\\[0.2em]
    (\vec{X}^{(2,k)})^T & h_{12}^{(2)} & H_{22}}.
\end{split}
\end{align}
Define also the matrix function 
    \begin{equation}
        \label{301023-1930}
        \mc G:\RR^2\to S_{k+1},\qquad 
	\mc G(\mathbf{t},\mathbf{u})=
        \widehat A_{\min}
        +\mathbf{t}E_{1,1}^{(k+1)}
        +\mathbf{u}\big(E_{1,2}^{(k+1)}+E_{2,1}^{(k+1)}\big).
    \end{equation}

The solution to the cubic circular type relation TMP is the following. 

    \begin{theorem}
		\label{221023-1854}
	Let $p(x,y)=y(ay+x^2+y^2)$,
        $a\in \RR\setminus\{0\}$,
	and $\beta=(\beta_{i,j})_{i,j\in \ZZ_+,i+j\leq 2k}$, where $k\geq 3$.
	Assume also the notation above.
	Then the following statements are equivalent:
	\begin{enumerate}	
		\item\label{221023-1854-pt1} 
                $\beta$ has a $\cZ(p)$--representing measure.
                \smallskip
		\item\label{221023-1854-pt3}  
  $\widetilde{\mc{M}}(k;\beta)$ is positive semidefinite,
  the relations
	\begin{equation}\label{221023-1857-equation}
	a\beta_{i,2+j}
        +
        \beta_{2+i,1+j}
        =
        -\beta_{i,3+j}
        \quad\text{hold for every }i,j\in \ZZ_+\text{ with }i+j\leq 2k-3
	\end{equation}
	and 
	one of the following statements holds:
        \smallskip
		\begin{enumerate}
			\item
				\label{221023-1857-pt3.1}
                    $\eta=0$
                    and one of the following holds:
                    \smallskip
                    \begin{enumerate}
                        \item 
                        \label{221023-1857-pt3.1.1}
                            $\Rank (\cH(A_{\min}))_{\vec{X}^{(0,k-1)}}=k$.
                        \smallskip
                        \item 
                        \label{221023-1857-pt3.1.2} 
                            $\Rank (H_2)_{\vec{X}^{(1,k-1)}}=\Rank H_2$.
                    \end{enumerate}
                    \smallskip
                \item
			\label{221023-1857-pt3.2}
                    $\eta>0$, $H_2$ is positive semidefinite
                    and defining a real number
                    \begin{align}
                        \label{101123-1548}
                         \begin{split}
                        u_0
                        &=
                        \beta_{1,0}-(A_{\min})_{1,2}
                        -(h_{12}^{(1)})^T (H_{22})^{\dagger} h_{12}^{(2)},
                        \end{split}
                    \end{align}
                    a function
                    \begin{equation}
                        \label{101123-1550}
                        h(\mathbf{t})=
                        \sqrt{
                        (H_1/H_{22}-\mathbf{t})
                        (H_2/H_{22})
                        }
                    \end{equation}
                    and a 
                    set
                    \begin{align}
                    \label{101123-1934}
                    \begin{split}
                    \mc I
                    &=
                    \big\{(t,\sqrt{\eta t})\in \RR_+\times \RR_+\colon
                    \sqrt{\eta t}= 
                    u_{0}+h(t)\},\\
                    &\hspace{0.5cm}
                    \cup
                    \big\{(t,\sqrt{\eta t})\in \RR_+\times \RR_-\colon
                    \sqrt{\eta t}= 
                    u_{0}-h(t)\},\\
                    &\hspace{0.5cm}
                    \cup
                    \big\{(t,-\sqrt{\eta t})\in \RR_+\times \RR_+\colon
                    -\sqrt{\eta t}= 
                    u_{0}+h(t)\},\\
                    &\hspace{0.5cm}
                    \cup
                    \big\{(t,-\sqrt{\eta t})\in \RR_+\times \RR_-\colon
                    -\sqrt{\eta t}= 
                    u_{0}-h(t)\},\\
                    \end{split}
                    \end{align}
                    one of the following holds:
                    \smallskip
                    \begin{enumerate}
                    \item 
                    \label{221023-1857-pt3.2.1}
                    The set $\mc I$ has two elements and $H_2$ is positive definite.
                    \smallskip
                    \item
                    \label{221023-1857-pt3.2.2}
                    $\mc I=\{(\tilde t,\tilde u)\}$ 
                    and 
                        \begin{equation}
                        \label{111123-1803}
                        \Rank 
                        \big(
                            \big(
                            \cH(\mc G(\tilde t,\tilde u))
                            \big)_{\vec{X}^{(0,k-1)}}
                        \big)
                        =
                        \Rank \cH(\mc G(\tilde t,\tilde u)).
                        \end{equation}
                    \end{enumerate}
	\end{enumerate}
 \end{enumerate}
        \smallskip
 	Moreover, if a $\mc Z(p)$--representing measure for $\beta$ exists, then:
        \begin{itemize}
        \item There exists at most $(\Rank \widetilde{\mc{M}}(k;\beta)+1)$--atomic $\cZ(p)$--representing 
            measure.
        \item There exists a $(\Rank \widetilde{\mc{M}}(k;\beta))$--atomic $\cZ(p)$--representing measure
	   if and only if any of the following holds:
        \smallskip
        \begin{itemize}
            \item 
                $\eta=0$.
                \smallskip
            \item 
                $\eta>0$
                and 
                $\cH(A_{\min})$ is positive definite.
        \end{itemize}
        \end{itemize}
In particular, a $p$--pure sequence $\beta$ with a 
 $\cZ(p)$--representing 
            measure
admits a $(\Rank \widetilde{\mc{M}}(k;\beta))$--atomic $\cZ(p)$--representing 
            measure.
\end{theorem}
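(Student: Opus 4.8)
\emph{Overview and reduction.} The plan is to turn the $\cZ(p)$--TMP into the decomposition problem of Lemma~\ref{071123-0646} and then into a two--parameter feasibility question resolved by explicit generalized Schur complement computations together with Theorems~\ref{circle-TMP} and \ref{Hamburger}. First I would record that \eqref{221023-1854-pt1} forces the standing part of \eqref{221023-1854-pt3}: if $\mu$ is a $\cZ(p)$--rm for $\beta$, then $\widetilde{\mc M}(k;\beta)\succeq0$ and, since $\supp\mu\subseteq\cZ(p)$, by \cite{CF96} the polynomial $p$ and all its multiples of degree $\leq2k$ are column relations of $\mc M(k;\beta)$; this is exactly the family \eqref{221023-1844}, equivalently \eqref{221023-1857-equation}, and it makes $\cT$ of \eqref{221023-1848} a spanning set for $\cC(\mc M(k;\beta))$, so the block form \eqref{071123-1939} is available. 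Conversely, once $\widetilde{\mc M}(k;\beta)\succeq0$ and \eqref{221023-1857-equation} hold, the same structure is in force and Lemmas~\ref{071123-0646}, \ref{071123-1942}, \ref{071123-2008} apply: by Lemma~\ref{071123-0646} the question becomes whether there is a Hankel $A\in S_{k+1}$ such that $\cF(A)$ has a $\cZ(c)$--rm and $\cH(A)$ has an $\RR$--rm, and since for the circular type $a_{20}\neq0$, Lemma~\ref{071123-1942}\eqref{071123-1942-pt3} and the Hankel--correction of Remark~\ref{general-procedure} show every such $A$ equals $\mc G(\mathbf t,\mathbf u)$ for some $(\mathbf t,\mathbf u)\in\RR^2$, with $\mc G$ as in \eqref{301023-1930}.

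\emph{The two sub--problems as matrix inequalities.} For the conic part: by Lemma~\ref{071123-2008}\eqref{071123-2008-pt2} the relations $X^iY^jc(X,Y)=\mathbf0$ already hold in $\cF(\widehat A_{\min})$, and since they involve neither the column $\mathit1$ nor the column $X$ they are unaffected by the perturbation $\mc G(\mathbf t,\mathbf u)-\widehat A_{\min}$; hence, via the affine equivalence of $\cZ(c)$ with the unit circle and Theorem~\ref{circle-TMP}, $\cF(\mc G(\mathbf t,\mathbf u))$ has a $\cZ(c)$--rm iff $\cF(\mc G(\mathbf t,\mathbf u))\succeq0$, which by Lemma~\ref{071123-2008}\eqref{071123-2008-pt0} is equivalent to $\mc G(\mathbf t,\mathbf u)\succeq A_{\min}$, i.e.\ to $\left(\begin{smallmatrix}\mathbf t&\mathbf u\\\mathbf u&\eta\end{smallmatrix}\right)\succeq0$. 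For the line part: $\cH(\mc G(\mathbf t,\mathbf u))$ is a Hankel matrix, so by Theorem~\ref{Hamburger} it has an $\RR$--rm iff it is psd and its leading $k\times k$ block is either pd or rank--preserving; computing the Schur complement of $H_{22}$ in $\cH(\mc G(\mathbf t,\mathbf u))$ from \eqref{081123-1936} yields $\left(\begin{smallmatrix}H_1/H_{22}-\mathbf t&u_0-\mathbf u\\u_0-\mathbf u&H_2/H_{22}\end{smallmatrix}\right)$ with $u_0,h$ as in \eqref{101123-1548}, \eqref{101123-1550}, so $\cH(\mc G(\mathbf t,\mathbf u))\succeq0$ iff $H_2\succeq0$, $\mathbf t\leq H_1/H_{22}$ and $|u_0-\mathbf u|\leq h(\mathbf t)$. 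Thus the theorem reduces to deciding for which $(\mathbf t,\mathbf u)$ both psd conditions hold and, additionally, the Hamburger rank alternative for $\cH(\mc G(\mathbf t,\mathbf u))$ is met.

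\emph{The case split.} If $\eta<0$ the first psd region is empty, so no valid $A$ and no $\cZ(p)$--rm, consistent with both branches of \eqref{221023-1854-pt3} failing. If $\eta=0$ one is confined to $\mathbf u=0$, $0\leq\mathbf t\leq\mathbf t_{\max}$ for an explicit $\mathbf t_{\max}\geq0$, and I would compute $\Rank\big(\cH(\mc G(\mathbf t,0))\big)$ and $\Rank\big(\cH(\mc G(\mathbf t,0))\big)_{\vec{X}^{(0,k-1)}}$ at $\mathbf t=0$ and at the Schur--complement collapse $\mathbf t=\mathbf t_{\max}$ (the ranks being controlled by Lemma~\ref{psd-completion}\eqref{psd-comp-pt2} and Proposition~\ref{extension-principle}), showing that Theorem~\ref{Hamburger} is satisfiable for some such $\mathbf t$ iff \eqref{221023-1857-pt3.1.1} or \eqref{221023-1857-pt3.1.2} holds. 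If $\eta>0$ the two psd regions are ``sideways parabola'' regions and the set $\mc I$ of \eqref{101123-1934} is exactly the set of their common boundary points; I would show a valid $(\mathbf t,\mathbf u)$ exists iff either the regions overlap with nonempty interior (forcing $\mc I$ to have two points) and $H_2$ is pd, so an interior point gives $\cH(\mc G(\mathbf t,\mathbf u))\succ0$ and Theorem~\ref{Hamburger} applies --- this is \eqref{221023-1857-pt3.2.1} --- or the regions only touch, $\mc I=\{(\tilde t,\tilde u)\}$, and the rank--preserving equality \eqref{111123-1803} holds there --- this is \eqref{221023-1857-pt3.2.2} --- with Lemma~\ref{psd-completion} again supplying $\Rank\cH(\mc G(\tilde t,\tilde u))$.

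\emph{Atoms, and the main obstacle.} A minimal $\cZ(p)$--rm is the union of a minimal $\cZ(c)$--rm for $\cF(A)$ ($\Rank\cF(A)$ atoms, Theorem~\ref{circle-TMP}) and a minimal $\RR$--rm for $\cH(A)$ ($\Rank\cH(A)$ atoms, Theorem~\ref{Hamburger}); writing $A=A_{\min}+M$ with $M\succeq0$ supported on the $\{\mathit1,X\}$ block and $\Rank M\leq2$, one has $\Rank\cF(A)\leq\Rank\cF(A_{\min})+\Rank M$, $\Rank\cH(A)\leq\Rank\cH(A_{\min})$, and $\Rank\widetilde{\mc M}(k;\beta)=\Rank\cF(A_{\min})+\Rank\cH(A_{\min})$ by Lemma~\ref{071123-2008}\eqref{071123-2008-pt3}, so choosing any feasible $(\mathbf t,\mathbf u)$ on the $\cF$--boundary $\mathbf t\eta=\mathbf u^2$ (where $\Rank M\leq1$) gives the $(\Rank\widetilde{\mc M}(k;\beta)+1)$--atomic bound. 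For the sharp case I would track the two ranks: $\Rank\widetilde{\mc M}(k;\beta)$ atoms are attainable exactly when one can take $M=0$, i.e.\ $\eta=0$ (with $A=\widehat A_{\min}$, the relevant branch of \eqref{221023-1857-pt3.1} guaranteeing $\cH(\widehat A_{\min})$ has an $\RR$--rm), or when $\eta>0$ but $\cH(A_{\min})\succ0$, so a rank--$1$ perturbation keeps $\Rank\cH(A)=k+1$ while the extra unit of $\Rank\cF(A)$ is absorbed within $\Rank\widetilde{\mc M}(k;\beta)$; otherwise every feasible $A$ strictly increases the sum. Finally a $p$--pure $\beta$ has $\mc M(k;\beta)$ $p$--pure, making $\Rank\widetilde{\mc M}(k;\beta)$ maximal subject to the relations and forcing $\eta=0$, which yields the last assertion. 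The substantive difficulty is the case split: converting ``the two parabolic feasibility regions meet at a point where the Hamburger rank alternative holds'' into the clean dichotomies \eqref{221023-1857-pt3.1.1}/\eqref{221023-1857-pt3.1.2} and \eqref{221023-1857-pt3.2.1}/\eqref{221023-1857-pt3.2.2} needs careful bookkeeping of the generalized Schur complements of $\cH(\mc G(\mathbf t,\mathbf u))$ along the boundary curves and a separate treatment of the degenerate configurations ($h\equiv0$, $H_2$ or $H_{22}$ singular, tangency of the boundary curves); the atom count is the second most delicate point, hinging on the rank--$1$ versus rank--$2$ behaviour of $M$ and on Lemma~\ref{psd-completion}\eqref{psd-comp-pt2}.
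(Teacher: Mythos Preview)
Your overall approach is essentially the paper's: reduce via Lemmas~\ref{071123-0646}--\ref{071123-2008} to the two--parameter family $\mc G(\mathbf t,\mathbf u)$, describe the psd regions $\mc R_1=\{\cF(\mc G(t,u))\succeq0\}$ and $\mc R_2=\{\cH(\mc G(t,u))\succeq0\}$ via Schur complements exactly as you do, and then run a case analysis on the intersection. One harmless variation: in the two--element $\mc I$ case you invoke an interior point of $\mc R_1\cap\mc R_2$ to make $\cH$ positive definite, whereas the paper works on the boundary and proves (its Claim~5) that of two boundary points $(t_1,u_1),(t_2,u_2)\in\partial\mc R_2$ with $u_1\neq u_2$, at least one yields a Hamburger measure. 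Your interior argument works for existence (since $H_2\succ0$ forces $H_{22}\succ0$, so interior points of $\mc R_2$ give $\cH$ of full rank $k+1$), but the boundary argument is what the paper actually uses for the atom count.

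There is one genuine error in your final paragraph: you assert that $p$--purity ``forces $\eta=0$''. This is false and is not what the paper proves. What $p$--purity gives, via the rank identity $\Rank\widetilde{\mc M}(k;\beta)=\Rank\cF(A_{\min})+\Rank\cH(A_{\min})$ of Lemma~\ref{071123-2008}\eqref{071123-2008-pt3}, is that $\Rank\cH(A_{\min})=k+1$, i.e.\ $\cH(A_{\min})\succ0$; this lands you in the second bullet of the moreover part whether or not $\eta$ vanishes. Relatedly, your atom--count paragraph is loose at the key step: when $\eta>0$ and $\cH(A_{\min})\succ0$, the mechanism is not that ``a rank--$1$ perturbation keeps $\Rank\cH(A)=k+1$'' --- it does not --- but rather that at a point of $\cI=\partial\mc R_1\cap\partial\mc R_2$ one has $\Rank\cF(\mc G(t,u))=\Rank\cF(A_{\min})+1$ and $\Rank\cH(\mc G(t,u))=\Rank H_{22}+1=k$, so the increase on the $\cF$ side is exactly compensated by a decrease on the $\cH$ side. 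Your inequality $\Rank\cH(A)\leq\Rank\cH(A_{\min})$ alone does not capture this cancellation; you need the precise rank formulae on $\partial\mc R_1$ and $\partial\mc R_2$ (the paper's \eqref{rank-R1} and \eqref{rank-R2}) together with the observation that a feasible point in $\mc I$ actually exists and admits a Hamburger measure (Claim~5 again).
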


\begin{remark}
\label{proof-line+circle}
In this remark we explain the idea of the proof of Theorem \ref{221023-1854}
and the meaning of the conditions in the statement of the theorem.

By Lemmas \ref{071123-0646}--\ref{071123-1942}, the existence of a $\mc Z(p)$--rm for $\beta$ is equivalent to the existence of $t,u\in \RR$ such 
that 
    $\cF(\mc G(t,u))$ 
admits a $\cZ(ay+x^2+y^2)$--rm 
and 
    $\cH(\mc G(t,u))$ 
admits a $\RR$--rm. 
    Let 
\begin{align*}
    \mc R_1
    &=\big\{(t,u)\in \RR^2\colon \cF(\mc G(t,u))\succeq 0\big\}
    \quad\text{and}\quad
    \mc R_2
    =\big\{(t,u)\in \RR^2\colon \cH(\mc G(t,u))\succeq 0\big\}.
\end{align*}
We denote by $\partial R_i$ and $\interior{R}_i$ the topological boundary and the interior of the set $R_i$, respectively.
 By the necessary conditions for the existence of a 
    $\cZ(p)$--rm \cite{CF04,Fia95,CF96}, 
    $\widetilde{\mc M}(k;\beta)$ must be psd and the relations
    \eqref{221023-1857-equation} must hold.
Using also Theorem \ref{circle-TMP}, 
Theorem \ref{221023-1854}.\eqref{221023-1854-pt1} is equivalent to
\begin{align}
\label{251023-1603-v2}
\begin{split}
&\widetilde{\cM}(k;\beta)\succeq 0, 
\text{ the relations }
\eqref{221023-1857-equation}\text{ hold 
and }\\
&\exists (t_0,u_0)\in \mc R_1\cap\mc R_2:
\cH(\mc G(t_0,u_0))\text{ admits a }\RR\text{--rm}.
\end{split}
\end{align}
In the proof of Theorem \ref{221023-1854} we show
that
\eqref{251023-1603-v2} is equivalent to
Theorem \ref{221023-1854}.\eqref{221023-1854-pt3}:
\begin{enumerate}
\item
First we establish (see Claims 1 and 2 below)
that the form of:
\begin{itemize}
    \item 
$\mc R_1$ is one of the following:
    \begin{center}
    \begin{tabular}{lr}
    \includegraphics[width=5cm]{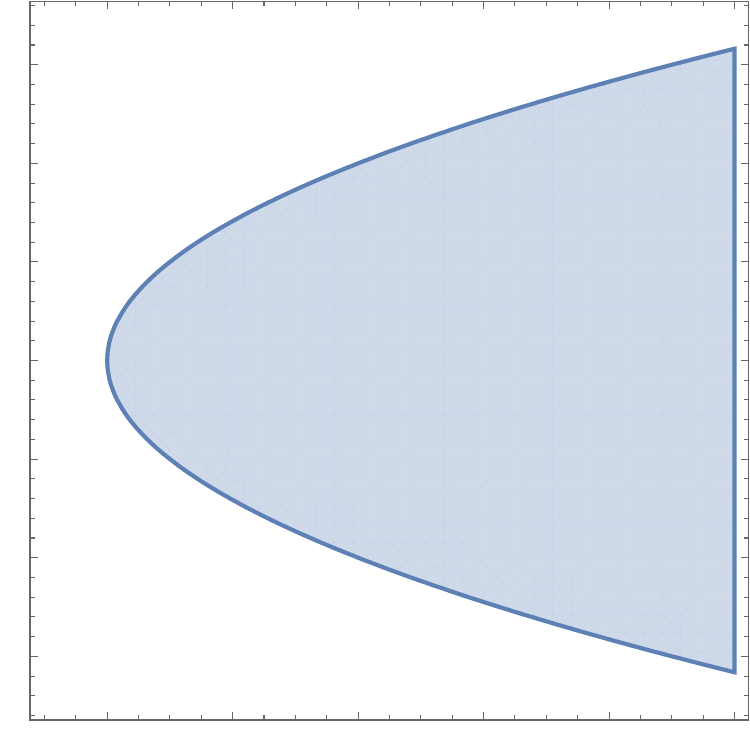}
    &
    \includegraphics[height=4cm]{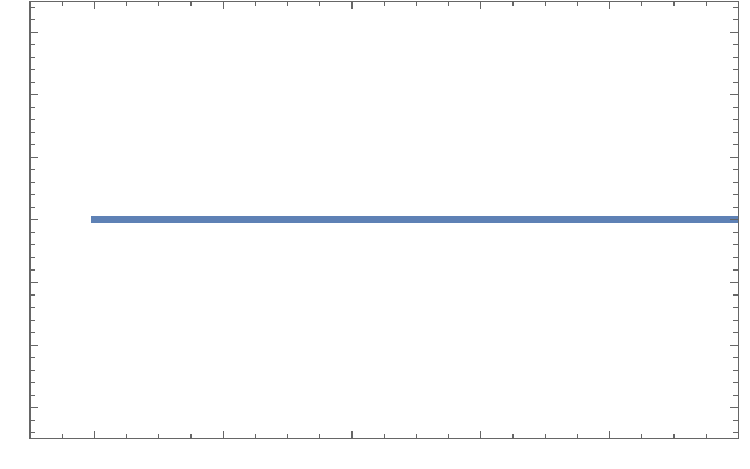}
    \end{tabular}
    \end{center}
where the left case occurs if $\eta>0$ and the right if $\eta=0$.
The case $\eta<0$ cannot occur.
\item 
$\mc R_2$ is one of the following:
    \begin{center}
    \begin{tabular}{lr}
    \includegraphics[width=5cm]{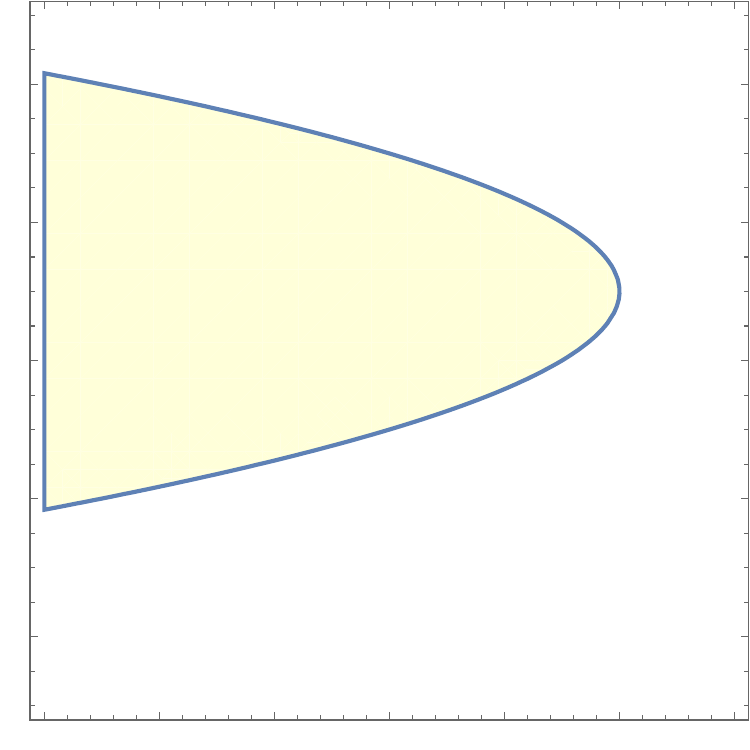}
    &
    \includegraphics[height=4cm]{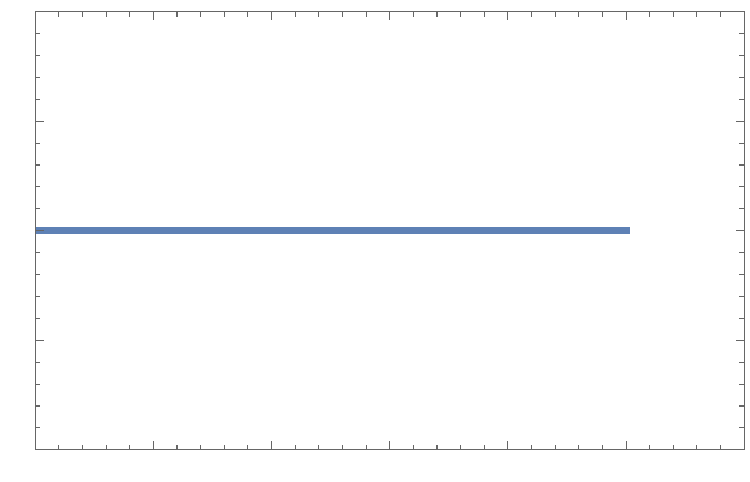}
    \end{tabular}
    \end{center}
where the left case occurs if $H_2/H_{22}>0$ and the right if $H_2/H_{22}=0$. 
\end{itemize}
\item If $\eta=0$, then  we show that
\eqref{251023-1603-v2}
is equivalent to
\begin{align*}
\begin{split}
&\widetilde{\cM}(k;\beta)\succeq 0, 
\text{ the relations }
\eqref{221023-1857-equation}\text{ hold 
and }
\cH(\mc G(0,0))\text{ admits a }\RR\text{--rm}.
\end{split}
\end{align*}
The latter statement is further equivalent to Theorem \ref{221023-1854}.\eqref{221023-1857-pt3.1}. 
\item 
If $\eta>0$,
then by the forms of $\mc R_1$ and $\mc R_2$, 
$\cI=\partial\mc R_1\cap \partial \mc R_2$ is one of the following: 
(i) $\emptyset$, 
(ii) a one-element set, 
(iii) a two-element set.  
In the case: 
\begin{itemize}
\item (i), a $\cZ(p)$--rm for $\beta$ clearly cannot exist.
\smallskip
\item (ii), then denoting $\cI=\{(\tilde t,\tilde u)\}$,
\eqref{251023-1603-v2}
is equivalent to 
\begin{align*}
\begin{split}
&\widetilde{\cM}(k;\beta)\succeq 0, 
\text{ the relations }
\eqref{221023-1857-equation}\text{ hold 
and }
\cH(\mc G(\tilde t,\tilde u))\text{ admits a }\RR\text{--rm}. 
\end{split}
\end{align*}
The latter statement is equivalent to Theorem \ref{221023-1854}.\eqref{221023-1857-pt3.2.2}.
\smallskip
\item (iii), \eqref{251023-1603-v2}
is equivalent to $H_2$ being positive definite,
which is Theorem \ref{221023-1854}.\eqref{221023-1857-pt3.2.1}.
Moreover, in this case for at least one of the points $(t,u)\in \cI$,
a $\cZ(ay+x^2+y^2)$--rm
and a $\RR$--rm exist for $\cF(\mc G(t,u))$ and
$\mc H(\mc G(t,u))$, respectively.
\end{itemize}
\end{enumerate}
\end{remark}

\begin{proof}[Proof of Theorem \ref{221023-1854}]
Let $\mc R_1, \mc R_2$ be as in Remark \ref{proof-line+circle}.
As explained in Remark \ref{proof-line+circle}, 
Theorem \ref{221023-1854}.\eqref{221023-1854-pt1} 
is equivalent to
\eqref{251023-1603-v2},
thus it remains to prove that
\eqref{251023-1603-v2} is equivalent to
Theorem \ref{221023-1854}.\eqref{221023-1854-pt3}.\\

First we establish a few claims needed in the proof.
Claim 1 (resp.\ 2) describes $\mc R_1$ (resp.\ $\mc R_2$) concretely.\\

\noindent 
\textbf{Claim 1.} 
    Assume that $\widetilde{\mc{M}}(k;\beta)\succeq 0$.
    Then
    \begin{equation}
    \label{form-of-R1}    
    \mc R_1
    =
    \left\{
    \begin{array}{rl}
    \big\{
    (t,u)\in \RR^2\colon 
    t\geq 0, 
    u\in \left[-\sqrt{\eta t},\sqrt{\eta t}\right]
    \big\},&
    \text{if }\eta\geq 0,\\[0.3em]
    \emptyset,&
    \text{if }\eta<0.
    \end{array}
    \right.
    \end{equation}
If $\eta\geq 0$, we have
\begin{align}
    \label{rank-R1}
    \Rank \cF(\mc G(t,u))=
    \left\{
    \begin{array}{rl}
            \Rank \cF(A_{\min}),&    
            \text{if }
                t=0, 
                \eta=0,\\[0.3em]
            \Rank \cF(A_{\min})+1,&    
            \text{if }
                (t>0 \text{ or }\eta>0)
                \text{ and }
                u\in \{-\sqrt{\eta t},\sqrt{\eta t}\},\\[0.3em]
            \Rank \cF(A_{\min})+2,&    
            \text{if }
                t>0,\eta>0,
                u\in \left(-\sqrt{\eta t},\sqrt{\eta t}\right),
    \end{array}
    \right.
\end{align}
where $A_{\min}$ is an in \eqref{091123-0719}.\\

\noindent\textit{Proof of Claim 1.}
Note that 
\begin{align}
\label{091123-1155}
\begin{split}
    \mc G(\mathbf{t},\mathbf{u})
    &=
        A_{\min}
        +\eta E_{2,2}^{(k+1)}
        +\mathbf{t}E_{1,1}^{(k+1)}
        +\mathbf{u}\big(E_{1,2}^{(k+1)}+E_{2,1}^{(k+1)}\big)\\
    &=
        A_{\min}+
        \begin{pmatrix}
            \mathbf{t} & \mathbf{u} \\ \mathbf{u} & \eta   
        \end{pmatrix}
        \oplus
        \mathbf{0}_{k-1}.
\end{split}
\end{align}
By Lemma \ref{071123-2008}, we have that
\begin{equation}   
\label{091123-1156}
\cF(\mc G(t,u))\succeq 0 
    \quad\Leftrightarrow\quad   
\mc G(t,u)\succeq A_{\min}
\end{equation}
Using \eqref{091123-1155}, \eqref{091123-1156} and the definition of $\mc R_1$, we have that
\begin{align}
\label{091123-1246}
    (t,u)\in \mc R_1
    \quad&\Leftrightarrow\quad 
    \begin{pmatrix}
        t & u \\ u & \eta
    \end{pmatrix}\succeq 0
    \quad\Leftrightarrow\quad 
    t\geq 0, \eta\geq 0, t\eta\geq u^2,
\end{align}
which proves \eqref{form-of-R1}. 

To prove \eqref{rank-R1} 
first note that by construction of $\cF(A_{\min})$, the columns $1$ and $X$
are 
in the span of the columns indexed by 
    $\cC\setminus \vec{X}^{(0,k)}$.
Hence, there are vectors 
\begin{equation}
\label{091123-1254}
    v_1, v_2 \in \ker \cF(A_{\min})
\end{equation}
of the forms 
$$
v_1=\begin{pmatrix}
    1 & \mathbf{0}_{1,k} & (\tilde v_1)^T
\end{pmatrix}^T\in \RR^{\frac{(k+1)(k+2)}{2}}
\quad\text{and}\quad
v_2=\begin{pmatrix}
    0 & 1 & \mathbf{0}_{1,k-2} & (\tilde v_2)^T
\end{pmatrix}^T\in \RR^{\frac{(k+1)(k+2)}{2}}.
$$
Let $r:=\Rank 
\begin{pmatrix}
    t & u \\ u & \eta
\end{pmatrix}$.
Clearly,
\begin{equation}
\label{091123-1251} 
    \Rank \cF(\mc G(t,u))\leq \Rank \cF(A_{\min})+r.
\end{equation}
We separate three cases according to $r$.
\\

\noindent\textbf{Case 1: $r=0$.} In this case $t=u=\eta=0$ and $\mc G(0,0)=A_{\min}$. In this case \eqref{rank-R1} clearly holds.\\

\noindent\textbf{Case 2: $r=1$.} In this case $t\eta=u^2$. Together with \eqref{091123-1246}, this is equivalent to
$(t>0 \text{ or }\eta>0) \text{ and }
                u\in \{-\sqrt{\eta t},\sqrt{\eta t}\}$.
By \eqref{091123-1251} and $\cF(\mc G(t,u))\succeq \cF(A_{\min})$ to
prove
\eqref{rank-R1}, it suffices to  
find $v\in \ker \cF(A_{\min})$ and $v\notin \ker \cF(\mc G(t,u))$. Note that at least one of $v_1,v_2$ from \eqref{091123-1254}
is such a vector, since
$$
(v_1)^T\cF(\mc G(t,u))v_1=t
    \quad \text{and}\quad
(v_2)^T\cF(\mc G(t,u))v_2=\eta.\smallskip
$$

\noindent\textbf{Case 3: $r=2$.} In this case $t\eta>u^2$. Together with \eqref{091123-1246}, this is equivalent to
$t>0,\eta>0,u\in (-\sqrt{\eta t},\sqrt{\eta t})$.
Note that
\begin{equation}
\label{091123-1305} 
\cF(\mc G(t,u))=
\cF\Big(\mc G\Big(\frac{u^2}{\eta},u\Big)\Big)+
\begin{pmatrix}
    t-\frac{u^2}{\eta}
\end{pmatrix}
\oplus
\mathbf{0}_{\frac{(k+1)(k+2)}{2}-1}
\succeq 
\cF\Big(\mc G\Big(\frac{u^2}{\eta},u\Big)\Big).
\end{equation}
By Case 2, we have $\Rank \cF\Big(\mc G\Big(\frac{u^2}{\eta},u\Big)\Big)=\Rank \cF(A_{\min})+1$.
By \eqref{091123-1251} and \eqref{091123-1305},
to
prove
\eqref{rank-R1}, it suffices to  
find $v\in \ker \cF\Big(\mc G\Big(\frac{u^2}{\eta},u\Big)\Big)$ and $v\notin \ker\cF(\mc G(t,u))$. 
We will check below, that $v_3$, defined by
    $$v_3=
        v_1-\frac{u}{\eta}v_2
        =
        \begin{pmatrix} 1 & -\frac{u}{\eta} & (\tilde v_3)^T\end{pmatrix}^T\in \RR^{\frac{(k+1)(k+2)}{2}},
        $$
is such a vector.
This follows by 
$$
\cF\Big(\mc G\Big(\frac{u^2}{\eta},u\Big)\Big)v_3=\cF(A_{\min})v_3+        
        \left(\begin{pmatrix}
            \frac{u^2}{\eta} & u \\ u & \eta   
        \end{pmatrix}
        \oplus
        \mathbf{0}_{\frac{(k+1)(k+2)-1}{2}}\right)v_3=\mathbf{0}_{\frac{(k+1)(k+2)}{2},1}
$$
and
$$
(v_3)^T\cF(\mc G(t,u))v_3=t-\frac{u^2}{\eta}>0.
$$

\noindent This concludes the proof of Claim 1.
\hfill$\blacksquare$\\

\noindent 
\textbf{Claim 2.} 
    Assume that $\widetilde{\mc{M}}(k;\beta)\succeq 0$.
    Let $u_{0}$, $h(\mathbf{t})$ be as in
    \eqref{101123-1548},\eqref{101123-1550} 
    and
    $$
    t_0
    =
    \beta_{0,0}-(A_{\min})_{1,1}-
                            (h_{12}^{(1)})^T (H_{22})^{\dagger} h_{12}^{(1)}.
    $$
    Then
    \begin{equation}
        \label{form-of-R2}
    \mc R_2
    =
    \left\{
    \begin{array}{rl}
    \big\{
    (t,u)\in \RR^2\colon 
    t\leq t_0, 
    u\in [u_0-h(t),u_0+h(t)]
    \big\},&\text{if }H_2\succeq 0,\\[0.3em]
    \emptyset,&\text{if }H_2\not\succeq 0.
    \end{array}
    \right.
    \end{equation}
If $H_2\succeq 0$, we have that
\begin{align}
    \label{rank-R2}
    \Rank \cH(\mc G(t,u))=
    \left\{
    \begin{array}{rl}
            \Rank H_{2},&    
            \text{for }
                t=t_0, 
                u=u_0,\\[0.2em]
            \Rank H_{22}+1,&    
            \text{for }
                t<t_0, 
                u\in \{u_0-h(t),u_0+h(t)\},\\[0.2em]
            \Rank H_{22}+2,&    
            \text{for }
                t<t_0, 
                u\in (u_0-h(t),u_0+h(t)).
    \end{array}
    \right.
\end{align}
\smallskip

\noindent\textit{Proof of Claim 2.}
Write
\begin{align*}
        H(\mathbf{t})
        &:=
        \big(
        \cH(\mc G(\mathbf{t},\mathbf{u})
        \big)_{1\cup\vec{X}^{(2,k)}}
        =
        \kbordermatrix{
        & \mathit{1} & \vec{X}^{(2,k)}\\
        \mathit{1} & \beta_{0,0}-(A_{\min})_{1,1}-\mathbf{t} & (h_{12}^{(1)})^T\\[0.3em]
        \vec{X}^{(2,k)} & h_{12}^{(1)} & H_{22}
        }
        \end{align*}
Note that 
    $H(0)=(\cH(A_{\min}))_{\{\mathit{1}\}\cup \vec{X}^{(2,k)}}$.
By Lemma \ref{071123-2008}.\eqref{071123-2008-pt1},
$\cH(A_{\min})\succeq 0$ 
and hence, 
$H(0)\succeq 0$.
By Theorem \ref{block-psd}, used for $(M,C)=(H(0),H_{22})$, it follows that $H_2\succeq 0$ and $h_{12}^{(1)}\in \cC(H_{22}).$
Again, by Theorem \ref{block-psd}, used for $(M,C)=(H(t),H_{22})$, it follows that 
$H(t)\succeq 0$ iff $t\leq t_0$.
For a fixed $t$ satisfying $t\leq t_0$, 
Lemma \ref{psd-completion}, used for $A(\mathbf{x})=\cH(\mc G(t,\mathbf{x}))$, 
together with $H(t)/H_{22}=H_1/H_{22}-t$, implies \eqref{form-of-R2}--\eqref{rank-R2} and proves Claim 2.
\hfill$\blacksquare$\\

\noindent 
\textbf{Claim 3.} 
If $\eta=0$, then
$(0,0)\in \partial\mc R_1\cap\mc R_2$.\\

\noindent\textit{Proof of Claim 3.}
By Claim 1, $\eta=0$ implies that
    $(0,0)\in\partial\mc R_1$.
By \eqref{091123-1155} and $\eta=0$, $\cH(A_{\min})=\cH(\mc G(0,0))$.
By Lemma \ref{071123-2008}.\eqref{071123-2008-pt1},
$\cH(A_{\min})\succeq 0$. Hence, $(0,0)\in \mc R_2$,
which proves Claim 3.
\hfill$\blacksquare$\\

\noindent 
\textbf{Claim 4.} 
If $\eta>0$, then:
\begin{itemize}
\item
The set $\cI$ (see \eqref{101123-1934}) has at most 2 elements. 
\smallskip
\item 
$\mc R_1\cap \mc R_2\neq \emptyset$ if and only if $\cI\neq \emptyset.$ 
\smallskip
\item 
If $\cI$ has two elements, then $H_2/H_{22}>0$. 
\smallskip
\item 
If $\cI$ has one element, which we denote by $(\tilde t,\tilde u)$, then one of the following holds:\smallskip
\begin{itemize}
    \item 
        $\mc R_1 \cap \mc R_2=\cI$.
        \smallskip
    \item
    $\partial \mc R_2=\mc R_2=\{(t,u_0)\colon t\leq t_0\}$
    and 
    $\cI\subsetneq \mc R_1\cap \mc R_2= 
    \{(t,u_0)\colon \tilde t\leq t\leq t_0\}$.
\end{itemize}
\end{itemize}
\smallskip

\noindent\textit{Proof of Claim 4.}
    Note that the set $\cI$ is equal to
    $\partial{\mc R}_1 \cap \partial{ \mc R}_2$ (see \eqref{form-of-R1} 
    and \eqref{form-of-R2}).
    Further on, $\partial{\mc R}_1$ is the union of the square root functions
    $\pm\sqrt{\eta \mathbf{t}}$, defined for
    $\mathbf{t}\in [0,\infty)$.
    Similarly, $\partial{\mc R}_2$ is 
    the union of the square root functions
    $u_0\pm\sqrt{(H_1/H_{22}-\mathbf{t})
                        (H_2/H_{22})}$, defined for
    $\mathbf{t}\in (-\infty,t_0]$.
    If $H_2/H_{22}=0$, then the latter could be a half-line
    $\{(t,u_0)\colon t\leq t_0\}$.
    If $\mc R_1\cap \mc R_2\neq \emptyset$, then geometrically it is clear that $\cI$ contains one or two elements. 
    Assume that $\cI$ contains only one element, denoted by $(\tilde t,\tilde u)$. 
    Clearly, $\cI\subseteq \mc R_1\cap \mc R_2$.
    Further on, we either have 
    $\cI=\mc R_1\cap \mc R_2$ or
    $\cI\subsetneq \mc R_1\cap \mc R_2$.
    By the forms of $\partial \mc R_1$ and $\partial \mc R_2$,
    the latter case occurs if
    $H_2/H_{22}=0$ or equivalently
    $\partial \mc R_2=\mc R_2=\{(t,u_0)\colon t\leq t_0\}$.  
    But then the whole line segment 
    $\{(t,u_0\colon \tilde t\leq t\leq t_0\}$
    lies in $\mc R_1$, which proves Claim 4.
\hfill$\blacksquare$\\

\noindent 
\textbf{Claim 5.}
Let $H_2$ (see \eqref{081123-1936}) be positive definite, $(t_1,u_1)\in \partial \mc R_2, (t_2,u_2)\in \partial \mc R_2$
and $u_1\neq u_2$.
Then at least one of
$\cH(\mc G(t_1,u_1))$ and 
$\cH(\mc G(t_2,u_2))$ 
admits a $\RR$--rm.\\

\noindent\textit{Proof of Claim 5.}
Note that $\cH(\mc G(t_i,u_i))$, $i=1,2$, is of the form
    \begin{align*}
    \cH(\mc G(t_i,u_i))
    &=
    \kbordermatrix{
    & \mathit{1} & X & \vec{X}^{(2,k-1)} & X^k\\
    \mathit{1} & \beta_{0,0}-(A_{\min})_{1,1}-t_{i} & 
    \beta_{1,0}-(A_{\min})_{1,2}-u_{i} & (\widehat{h}_{12}^{(1)})^T & \widetilde\beta_{k,0}\\[0.3em]
    X& 
    \beta_{1,0}-(A_{\min})_{1,2}-u_{i} 
    & \beta_{2,0}-(A_{\min})_{1,3}  & (\widehat{h}_{12}^{(2)})^T & \widetilde\beta_{k+1,0}\\[0.3em]
    (\vec{X}^{(2,k-1)})^T&\widehat{h}_{12}^{(1)} & \widehat{h}_{12}^{(2)} & \widehat H_2 & \widehat h_{3}\\[0.3em]
    X^k& \widetilde\beta_{k,0} & \widetilde\beta_{k+1,0} & (\widehat h_3)^T & \widetilde\beta_{2k,0}
    }.
    \end{align*}
Assume on the contrary that 
    none of $\cH(\mc G(t_1,u_1))$ and 
$\cH(\mc G(t_2,u_2))$ admits a $\RR$--rm. 
Theorem \ref{Hamburger}
implies that the column $X^k$ of $\cH(\mc G(t_i,u_i))$, $i=1,2$,
is not in the span of the other columns.
Using this fact, the facts that $\cH(\mc G(t_i,u_i))$, $i=1,2$, are not pd (by $(t_i,u_i)\in \partial \mc R_2$, $i=1,2$) and $H_2$ is pd, it follows that there is a column relation
    $
        \mathit{1}=\sum_{j=1}^{k-1} \alpha^{(i)}_j X^{j},
    $
    $\alpha_j^{(i)}\in \RR$,
    in $\cH(\mc G(t_i,u_i))$, $i=1,2$.
    Since $\cH(\mc G(t_i,u_i))\succeq 0$, $i=1,2$,  
    it follows in particular by Theorem \ref{block-psd},
    used for $(M,A)=(\cH(\mc G(t_i,u_i)),(\cH(\mc G(t_i,u_i)))_{\vec{X}^{(0,k-1)}})$, $i=1,2$,
    that
    \begin{align}
    \label{261023-1701}
        \begin{pmatrix}
            \widetilde\beta_{k,0}&
            \widetilde\beta_{k+1,0}&
            (\widehat h_3)^T
        \end{pmatrix}^T
    &\in \cC\Big(\big(
    \cH(\mc G(t_i,u_i))
    \big)_{\vec{X}^{(0,k-1)}}\Big),\quad i=1,2.
    \end{align}
    Since the first column of 
    $\cH(\mc G(t_i,u_i))\succeq 0$, $i=1,2$,
    is in the span of the others, 
    \eqref{261023-1701} is equivalent to
    \begin{align}
    \label{261023-1743}
        \begin{pmatrix}
            \widetilde\beta_{k,0}&
            \widetilde\beta_{k+1,0}&
            (\widehat h_3)^T
        \end{pmatrix}^T
    &\in \cC\Big(\big(
    \cH(\mc G(t_i,u_i))
    \big)_{\vec{X}^{(0,k-1)},\vec{X}^{(1,k-1)}}\Big),\quad i=1,2.
    \end{align}
    Since 
    $$
    \widetilde H_2
    :=
    \big(\cH(\mc G(t_i,u_i))\big)_{\vec{X}^{(1,k-1)}},\quad i=1,2, 
    $$
    is invertible as a principal submatrix of $H_2$, it follows that
    \begin{align}
    \label{261023-1746}
        \begin{pmatrix}
            \widetilde\beta_{k,0}&
            \widetilde\beta_{k+1,0}&
            (\widehat h_3)^T
        \end{pmatrix}^T
    &=\Big(\big(
    \cH(\mc G(t_i,u_i))
    \big)_{
            \vec{X}^{(0,k-1)},\vec{X}^{(1,k-1)}}\Big) v,
            \quad i=1,2.
    \end{align}
    with 
    $$
    v=
    \widetilde H_2^{-1} 
    \begin{pmatrix}
        \widetilde\beta_{k+1,0} & \widehat h_3
    \end{pmatrix}^T
    =
    \begin{pmatrix}
        v_1 & v_2 & \cdots & v_{k-1}
    \end{pmatrix}^T.
    $$
    If $v_1\neq 0$, this contradicts to \eqref{261023-1746} since 
    $u_{1}\neq u_2$.
    Hence, $v_1=0$.
    By the Hankel structure of $\cH(\mc G(t_i,u_i))$, $i=1,2$, we have that 
    $$
        \big(
            \cH(\mc G(t_i,u_i))
        \big)_{
            \vec{X}^{(0,k-2)},\vec{X}^{(2,k)}}=
        \big(
            \cH(\mc G(t_i,u_i))
        \big)_{
            \vec{X}^{(1,k-1)},\vec{X}^{(1,k-1)}},
            \quad i=1,2.
    $$
    Then \eqref{261023-1746} and $v_1=0$ imply that
    \begin{equation}
    \label{261023-1803}
        \Big(\big(
            \cH(\mc G(t_i,u_i))
        \big)_{
            \vec{X}^{(0,k-2)},\vec{X}^{(2,k)}}\Big)\widetilde v=
        \Big(\big(
            \cH(\mc G(t_i,u_i))
        \big)_{
            \vec{X}^{(1,k-1)},\vec{X}^{(1,k-1)}}\Big)\widetilde v=\mathbf{0}_{k+1,1},
    \end{equation}
    where
    $
    \widetilde v
    =
    \begin{pmatrix}
        v_2 & \cdots & v_{k-1} & -1 
    \end{pmatrix}.
    $
    Since $\big(\cH(\mc G(t_i,u_i))\big)_{
            \vec{X}^{(1,k-1)},\vec{X}^{(1,k-1)}},$
            $i=1,2$,
    is a principal submatrix of $H_{2}$,
    \eqref{261023-1803} contradicts to $H_2$ being pd.
    This proves Claim 5.\hfill $\blacksquare$\\

    Now we prove the implication 
    $\eqref{251023-1603-v2}\Rightarrow
    \text{Theorem }\ref{221023-1854}.\eqref{221023-1854-pt3}$.
    Since $(t_0,u_0)\in \mc R_1$, it follows that
    $\mc R_1\neq \emptyset.$
    By \eqref{form-of-R1}, $\eta\geq 0$.
    We separate two cases according to the value of $\eta$.\\

    \noindent
    \textbf{Case 1: $\eta=0$.}
    We separate two cases according to the invertibility of $H_2$.\\
    
    \noindent
    \textbf{Case 1.1: $H_2$ is not pd.}
    Since $H_2$ is not pd, then by Theorem \ref{Hamburger}, the last column of $\cH(\mc G(t_0,u_0))$
    is in the span of the previous ones. But then by rg, the last column of $H_2$
    is in the span of the previous ones. This is the case 
    Theorem \ref{221023-1854}.\eqref{221023-1857-pt3.1.2}.\\

    \noindent
    \textbf{Case 1.2: $H_2$ is pd.} 
    We separate two cases according to the invertibility of 
    $(\cH(A_{\min}))_{\vec{X}^{(0,k-1)}}$.\\

    \noindent
    \textbf{Case 1.2.1: $\Rank (\cH(A_{\min})_{\vec{X}^{(0,k-1)}})=k$.}
    This is the case
    Theorem \ref{221023-1854}.\eqref{221023-1857-pt3.1.1}.\\

    \noindent
    \textbf{Case 1.2.2: $\Rank (\cH(A_{\min})_{\vec{X}^{(0,k-1)}})<k$.}
    We will prove that this case cannot occur.
    It follows from the assumption in this case that 
    $\Rank \cH(A_{\min})=\Rank H_2=k$.
    Further on,
    the last column of
    $\cH(A_{\min})$
    cannot be in the span of the previous ones
    (otherwise $\Rank \cH(A_{\min})<k$).
    Hence, by Theorem \ref{Hamburger},
    $\cH(A_{\min})=\cH(\mc G(0,0))$
    does not admit a $\RR$--rm.
    Using this fact and Claim 3, 
    $(0,0)\in \partial \mc R_2$.
    If $t_0=0$, then
    $\mc R_1\cap\mc R_2=
    \{(0,0)\}$,
    which contradicts to the third condition in  
    \eqref{251023-1603-v2}.
    So $0<t_0$ must hold.
    Since $\eta=0$, Claim 1
    implies that
    $\mc R_1=
    \{(t,0)
    \colon
    t\geq 0    
    \}$
    is a horizontal half-line.
    By the form of $\partial \mc R_2$,
    which is the union of the graphs of two square root functions on the interval 
    $(-\infty,t_0]$, intersecting in 
    the point $(t_0,u_0)$
    and such that $(t_0,u_0)\in \partial R_2$, 
    it follows that   
    $\mc R_1\cap\mc R_2=
    \{(0,0)\}$. Note that by $H_2\succ 0$, we have $H_2/H_{22}>0$ and hence $h(t)\not\equiv 0$ (see \eqref{101123-1550}),
    which implies that the square root functions are indeed not just a horizontal half-line.
    As above this contradicts to the third condition in  
    \eqref{251023-1603-v2}. Hence, Case 1.2.2 cannot occur.\\

    \noindent
    \textbf{Case 2: $\eta>0$.}
    By assumptions, $(t_0,u_0)\in \mc R_1\cap \mc R_2$.
    By Claim 4, $\cI\neq \emptyset$ and $\cI$ has one or two elements.
    We separate two cases according to the number of elements in $\cI$.\\

    \noindent
    \textbf{Case 2.1: $\cI$ has two elements.}
    By Claim 4, $H_2/H_{22}>0$. 
    If $H_2$ is not pd, then
    the fact that $\cH(\mc G(t_0,u_0))$ has a $\RR$--rm, implies that
    $H_2/H_{22}=0$, which is a contradiction.
    Indeed, if $H_2/H_{22}>0$ and $H_2$ is not pd, then there is a nontrivial column relation among columns $X^2,\ldots,X^k$ in $H_2$. By Proposition \ref{extension-principle},
    the same holds for $\cH(\mc G(t_0,u_0))$. 
    Let $\sum_{i=0}^{k-2} c_i X^{i+2}=\mathbf{0}$ be the nontrivial column relation in $\cH(\mc G(t_0,u_0))$.
    But then
        $\cZ(x^2\sum_{i=0}^{k-2} c_i x^i)=\cZ(x\sum_{i=0}^{k-2} c_i x^i)$
    and it follows by \cite{CF96} that 
    $\sum_{i=0}^{k-2} c_i X^{i+1}=\mathbf{0}$
    is also a nontrivial column relation in $\cH(\mc G(t_0,u_0))$.
    In particular, $H_2/H_{22}=0$.
    Hence, $H_2$ is pd.
    This is the case
    Theorem \ref{221023-1854}.\eqref{221023-1857-pt3.2.1}.
    \\

    \noindent
    \textbf{Case 2.2: $\cI$ has one element.}
    Let us denote this element by $(\tilde t,\tilde u)$.
    By Claim 4, 
    $\cI=\mc R_1\cap \mc R_2$
    or
    $\partial \mc R_2=\mc R_2=\{(t,u_0)\colon t\leq t_0\}$
    and 
    $\cI\subsetneq \mc R_1\cap \mc R_2= 
    \{(t,u_0)\colon \tilde t\leq t\leq t_0\}$
    .
    We separate two cases according to these two possibilities.\\

    \noindent
    \textbf{Case 2.2.1: $\cI=\mc R_1\cap \mc R_2$.} In this case 
    $(t_0,u_0)=(\tilde t,\tilde u)$ and hence
    $\cH(\mc G(\tilde t,\tilde u)$ admits a $\RR$--rm. Since $(\tilde t,\tilde u)\in \partial \mc R_1$, $\cH(\mc G(\tilde t,\tilde u))$ is not pd. Hence, by Theorem \ref{Hamburger}, the statement Theorem \ref{221023-1854}.\eqref{221023-1857-pt3.2.2} holds.\\

    \noindent
    \textbf{Case 2.2.2: 
    $\partial \mc R_2=\mc R_2=\{(t,u_0)\colon t\leq t_0\}$
    and 
    $\cI\subsetneq \mc R_1\cap \mc R_2= 
    \{(t,u_0)\colon \tilde t\leq t\leq t_0\}$.}
    By \eqref{form-of-R2}, it follows that $H_2/H_{22}=0$ (see the definition
    \eqref{101123-1550} of $h(\mathbf{t})$).
    Since $H_2$ is not pd, Theorem \ref{Hamburger} used for  
    $\cH(\mc G(t_0,u_0))$, implies that the last column of $H_2$ is in the
    span of the others. Hence, the same holds by Proposition \ref{extension-principle} for $\cH(\mc G(\tilde t,\tilde u))$ and $\cH(\mc G(\tilde t,\tilde u))$ admits a $\RR$--rm by Theorem \ref{Hamburger}.
    Since $\cH(\mc G(\tilde t,\tilde u))$ is not pd, it in particular satisfies
    \eqref{111123-1803}. Hence, we are in the case Theorem \ref{221023-1854}.\eqref{221023-1857-pt3.2.2}.
    \\
    
    \noindent This concludes the proof of the implication
        $\eqref{251023-1603-v2}\Rightarrow
    \text{Theorem }\ref{221023-1854}.\eqref{221023-1854-pt3}$.\\

    Next we prove the implication 
    $
    \text{Theorem }\ref{221023-1854}.\eqref{221023-1854-pt3}\Rightarrow
    \eqref{251023-1603-v2}
    $. 
    We separate four cases according to the assumptions in $\text{Theorem }\ref{221023-1854}.\eqref{221023-1854-pt3}$.\\

    \noindent\textbf{Case 1: Theorem \ref{221023-1854}.\eqref{221023-1857-pt3.1.1} holds.}
    By Claim 3, $(0,0)\in \mc R_1\cap \mc R_2$.
    This and the assumption $\Rank (\cH(A_{\min}))_{\vec{X}^{(0,k-1)}}=k$,
    imply by Theorem \ref{Hamburger}, 
    that
    $\cH(\mc G(0,0))=\cH(A_{\min})$ admits a $\RR$--rm.
    This proves \eqref{251023-1603-v2} in case of Theorem \ref{221023-1854}.\eqref{221023-1857-pt3.1.1}.\\

    \noindent\textbf{Case 2: Theorem \ref{221023-1854}.\eqref{221023-1857-pt3.1.2}
    holds.}
    By Claim 3, $(0,0)\in \mc R_1\cap \mc R_2$.
    Since the last column of $H_2$
    is by assumption in the span of the previous ones, the same holds for 
    $\cH(\mc G(0,0))$ by Proposition \ref{extension-principle}.
    By Theorem \ref{Hamburger},
    $\cH(\mc G(0,0))$ admits a $\RR$--rm. This proves \eqref{251023-1603-v2} in case of Theorem \ref{221023-1854}.\eqref{221023-1857-pt3.1.2}.\\

    \noindent\textbf{Case 3: Theorem \ref{221023-1854}.\eqref{221023-1857-pt3.2.1}
    holds.}
    By assumption, $\cI=\partial \mc R_1\cap \partial \mc R_2=\{(t_1,u_1),(t_2,u_2)\}$.
    Since $H_2$ is pd, $\partial \mc R_{2}$ is not a half-line and hence $u_1\neq u_2$.
    By Claim 5, at least one of 
        $\cH(\mc G(t_{1},u_{1}))$
    and
        $\cH(\mc G(t_2,u_2))$
    admits a $\RR$--rm. 
    This proves \eqref{251023-1603-v2} in case of Theorem \ref{221023-1854}.\eqref{221023-1857-pt3.2.1}.
    \\

    \noindent\textbf{Case 4: Theorem \ref{221023-1854}.\eqref{221023-1857-pt3.2.2}
    holds.}
    The assumptions imply by Theorem \ref{Hamburger}, that 
    $\cH(\mc G(\tilde t,\tilde u))$
    admits a $\RR$--rm.
        This proves \eqref{251023-1603-v2} in case of Theorem \ref{221023-1854}.\eqref{221023-1857-pt3.2.2}.\\
        
        \noindent This concludes the proof of the implication
        $\text{Theorem }\ref{221023-1854}.\eqref{221023-1854-pt3}\Rightarrow\eqref{251023-1603-v2}.
    $\\
    
    Up to now we established the equivalence
    $\eqref{221023-1854-pt1}
    \Leftrightarrow
    \eqref{221023-1854-pt3}$
    in Theorem $\ref{221023-1854}$.
    It remains to prove the moreover part.    
    We observe again the proof of the implication
    $\eqref{221023-1854-pt3}
    \Rightarrow
    \eqref{251023-1603-v2}$.
    By Lemma \ref{071123-2008}.\eqref{071123-2008-pt3},
    \begin{equation}
        \label{261023-1536}
        \Rank \widetilde{\mc M}(k;\beta)
        =\Rank \cF(A_{\min})+
        \Rank \cH(A_{\min}).
    \end{equation}
    
    In the proof of the implications
    Theorem $\ref{221023-1854}.\eqref{221023-1857-pt3.1.1}\Rightarrow
    \eqref{251023-1603-v2}$
    and
    Theorem $\ref{221023-1854}.\eqref{221023-1857-pt3.1.2}\Rightarrow
    \eqref{251023-1603-v2}$
    we established that
    $\cH(\mc G(0,0))$
    has a $\RR$--rm.
    By Theorem \ref{Hamburger}, there also exists 
    a $(\Rank\cH(\mc G(0,0)))$--atomic one. By Theorem \ref{circle-TMP}, the sequence with the moment matrix
    $\cF(\mc G(0,0))$ can be represented by a
    $(\Rank\cF(\mc G(0,0)))$--atomic
    $\cZ(ay+x^2+y^2)$--rm. By \eqref{261023-1536} and $\mc G(0,0)=A_{\min}$ if $\eta=0$, in these two cases $\beta$ has a $(\Rank \widetilde{\mc M}(k;\beta))$--atomic $\cZ(p)$--rm.

    In the proof of the implication
    Theorem $\ref{221023-1854}.\eqref{221023-1857-pt3.2.1}\Rightarrow
    \eqref{251023-1603-v2}$
    we established that $\cH(\mc G(t',u'))$ has a $\RR$--rm
    for some $(t',u')\in \cI$.
    Analogously as for the point $(0,0)$ in the previous paragraph, it follows that
    $\beta$ has a 
    $(
    \Rank\cF(\mc G(t',u')) 
    +
    \Rank \cH(\mc G(t',u'))
    )$--atomic $\cZ(p)$--rm.
    Using \eqref{rank-R1}, \eqref{rank-R2} and $\Rank H_2=\Rank H_{22}+1$ (by $H_2$ being pd),
    it follows that
    \begin{equation}
    \label{231123-2303}    
    \Rank\cF(\mc G(t',u')) 
        +
        \Rank \cH(\mc G(t',u'))
        =
        \Rank \cF(A_{\min})+\Rank H_2+1.
    \end{equation}
    We separate two cases:
    \begin{itemize}
        \item If $\cH(A_{\min})$ is pd, then 
    $\Rank \cH(A_{\min})=\Rank H_{2}+1$.
    This, \eqref{261023-1536}
    and \eqref{231123-2303} imply that
    $\beta$ admits a $(\Rank \widetilde{\mc{M}}(k;\beta))$--atomic $\cZ(p)$--rm.
        \item 
        If $\cH(A_{\min})$ is not pd,
        then we must have
        $\Rank \cH(A_{\min})=\Rank H_{2}$,
    Otherwise we have
    $(\cH(A_{\min}))_{\vec{X}^{(1,k)}}/H_{22}=0$
    and hence 
    $(\cH(A_{\min}-\eta E_{2,2}^{(k+1)}))_{\vec{X}^{(1,k)}}/H_{22}<0$,
    which contradicts to 
    $\cH(A_{\min}-\eta E_{2,2}^{(k+1)})$
    being psd. Hence, in this case 
    $\beta$ has a 
    $(\Rank \widetilde{\mc{M}}(k;\beta)+1)$--atomic $\cZ(p)$--rm.
    Moreover, there cannot exist a
    $(\Rank \widetilde{\mc{M}}(k;\beta))$--atomic $\cZ(p)$--rm. Indeed, 
    since $\eta>0$, at least $\Rank \cF(A_{\min})+1$ (resp.\ $\Rank H_2$) atoms are needed to represent 
    $\cF(\mc G(t'',u''))$ (resp.\ $\cH(\mc G(t'',u''))$) for any $(t'',u'')\in \mc R_1\cap \mc R_2$ 
    (see \eqref{rank-R1} and \eqref{rank-R2}).
    Hence,
    at least    
    $\Rank\cF(A_{\min})
    +
    \Rank H_{2}+1$ atoms are needed in a  
    $\cZ(p)$--rm for any $(t'',u'')\in \mc R_1\cap \mc R_2$.
    \end{itemize}

    In the proof of the implication
    Theorem $\ref{221023-1854}.\eqref{221023-1857-pt3.2.2}\Rightarrow
    \eqref{251023-1603-v2}$
    we established that $\cH(\mc G(\tilde t,\tilde u))$
    has a $\RR$--rm.
    Analogously as for the point $(0,0)$ in two paragraphs above, it follows that
    $\beta$ has a 
    $(
    \Rank\cF(\mc G(\tilde t,\tilde u)) 
    +
    \Rank \cH(\mc G(\tilde t,\tilde u))
    )$--atomic $\cZ(p)$--rm.
    By \eqref{rank-R1} and \eqref{rank-R2}, this measure is
    $(
    \Rank\cF(A_{\min})
    +
    \Rank H_{22}+2
    )$--atomic.
    \begin{itemize}
        \item If $\cH(A_{\min})$ is pd, then 
    $\Rank \cH(A_{\min})=\Rank H_{22}+2$.
    This and \eqref{261023-1536} imply that
    $\beta$ admits a $(\Rank \widetilde{\mc{M}}(k;\beta))$--atomic $\cZ(p)$--rm.
        \item 
        If $\cH(A_{\min})$ is not pd, then we have
        $\Rank \cH(A_{\min})=\Rank H_{22}+1$,
    since otherwise the equality
    $(\cH(A_{\min}))_{\vec{X}^{(1,k)}}/H_{22}=0$
    implies
    $(\cH(A_{\min}-\eta E_{2,2}^{(k+1)}))_{\vec{X}^{(1,k)}}/H_{22}<0$,
    which contradicts to 
    $\cH(A_{\min}-\eta E_{2,2}^{(k+1)})$
    being psd. Hence, in this case 
    $\beta$ has a 
    $(\Rank \widetilde{\mc{M}}(k;\beta)+1)$--atomic $\cZ(p)$--rm.
    Moreover, there cannot exist a
    $(\Rank \widetilde{\mc{M}}(k;\beta))$--atomic $\cZ(p)$--rm in this case. Indeed, 
    $$
    (\mc R_1\cap \mc R_2)\setminus \cI
    =
    (\partial \mc R_1\cap \interior{\mc R}_2)
    \cup
    (\interior{\mc R}_1\cap \partial \mc R_2)
    \cup
    (\interior{\mc R}_1\cap \interior{\mc R}_2).
    $$
    Using \eqref{rank-R1} and \eqref{rank-R2},
    in every point from 
    $(\mc R_1\cap \mc R_2)\setminus \cI$
    at least    
    $\Rank\cF(A_{\min})
    +
    \Rank H_{22}+2$ atoms are needed in a  
    $\cZ(p)$--rm.
    \end{itemize}

    \noindent This concludes the proof of the moreover part.

    Since for a $p$--pure sequence 
    with $\widetilde{\mc M}(k;\beta))\succeq 0$, \eqref{261023-1536} implies that $\cH(A_{\min})$ is pd, it follows by the moreover part that 
    the existence of a $\cZ(p)$--rm
    implies the existence of 
    a $(\Rank \widetilde{\mc M}(k;\beta))$--atomic $\cZ(p)$--rm.
\end{proof}

The following example demonstrates the use of Theorem \ref{221023-1854}
to show that there exists a bivariate $y(-2y+x^2+y^2)$--pure sequence $\beta$ of degree 6 with a positive semidefinite $\mc M(3)$ and without a $\cZ(y(-2y+x^2+y^2))$--rm.

\begin{example}\label{ex-line-plus-circle-no-rm}
Let $\beta$ be a bivariate degree 6 sequence given by
\begin{align*}
    \beta_{00} & = 10,
    & \beta_{10} & =\frac{38}{5},
    & \beta_{01} & = \frac{39}{5},\\[0.2em]
      \beta_{20} & = \frac{602}{25},
    &  \beta_{11} &= \frac{3}{25},
     &\beta_{02} & =\frac{313}{25},\\[0.2em]
    \beta_{30} & =\frac{9152}{125},
    & \beta_{21} & =\frac{421}{125},
    & \beta_{12} & =\frac{3}{125},\\[0.2em]
     \beta_{03} & =\frac{2709}{125},
    & \beta_{40} & =\frac{172118}{625},
    & \beta_{31} & =\frac{27}{625},\\[0.2em]
     \beta_{22} & =\frac{2717}{625},
    & \beta_{13} & =\frac{3}{625},
    &     \beta_{04} & =\frac{24373}{625},\\[0.2em]
    \beta_{50} &=
        \frac{3303368}{3125},
    &\beta_{41} &=
        \frac{7789}{3125},
    &\beta_{32} &=
        \frac{27}{3125},\\[0.2em]
    \beta_{23} &=
        \frac{19381}{3125},
    &\beta_{14} &=
        \frac{3}{3125},
    &\beta_{05} &=
        \frac{224349}{3125},\\[0.2em]
    \beta_{60} &= 
        4156,
    &\beta_{51} &=
        \frac{243}{15625},
    &\beta_{42} &=
        \frac{44453}{15625},\\[0.2em]
    \beta_{33} &=
        \frac{27}{15625},
    &\beta_{24} &=
        \frac{149357}{15625},
    &\beta_{15} &=
        \frac{3}{15625},\\[0.2em]
    \beta_{06} &=
        \frac{2094133}{15625}.
\end{align*}
Assume the notation as in Theorem \ref{221023-1854}. 
  $\widetilde{\mc M}(3)$ is psd with the eigenvalues 
 $\approx 4445$, $\approx 189.2$, $\approx 16.6$, $\approx 11.9$, $\approx 3.2$, $\approx 1.22$, $\approx 0.57$, 
 $\approx 0.022$, $\approx 0.0030$, $0$
and the column relation $$-2Y^2+X^2Y+Y^3=0.$$
We have that
$$
A_{\min}
=
\begin{pmatrix}
    \frac{324330}{55873} & \frac{132789}{278915} & \frac{77}{25} & \frac{27}{125}\\[0.5em]
    \frac{132789}{278915} & \frac{4180091}{1394575} & \frac{27}{125} & \frac{1493}{625}\\[0.5em]
    \frac{77}{25} & \frac{27}{125} & \frac{1493}{625} & \frac{243}{3125}\\[0.5em]
    \frac{27}{125} & \frac{1493}{625} & \frac{243}{3125} & \frac{33437}{15625}
\end{pmatrix}
$$
and so 
$$\eta=\frac{77}{25}-\frac{4180091}{1394575}=\frac{4608}{55783}.$$
The matrix $H_{2}$ is equal to:
\begin{align*}
H_{2}
&=\begin{pmatrix}
21 & 73 & 273\\
73 & 273 & 1057\\
273 & 1057 & \frac{64904063}{15625}
\end{pmatrix}.
\end{align*}
The eigenvalues of $H_2$ are $\approx 4441.1$, $\approx 6.74$, $\approx -0.019$ and hence $H_2$ is not psd.
By Theorem \ref{221023-1854}, $\beta$ does not have a $\cZ(y(-2y+x^2+y^2))$--rm, since by \eqref{221023-1857-pt3.2} of Theorem \ref{221023-1854},
$H_2$ should be psd.
\end{example}



\section{Parabolic type relation: 
            $p(x,y)=y(x-y^2)$.
        }
\label{parabolic}

In this section we solve the $\cZ(p)$--TMP for 
the sequence $\beta=\{\beta_i\}_{i,j\in \ZZ_+,i+j\leq 2k}$ 
of degree $2k$, $k\geq 3$,
where $p(x,y)=y(x-y^2)$.
Assume the notation from Section \ref{Section-common-approach}.
If $\beta$ admits a $\cZ(p)$--TMP, then $\mc M(k;\beta)$ 
must satisfy the relations
\begin{equation}
    \label{130623-0758}
	Y^{3+j}X^{i}=Y^{1+j}X^{i+1}\quad
        \text{for }i,j\in \ZZ_+\text{ such that }i+j\leq k-3.
\end{equation}
In the presence of all column relations \eqref{130623-0758}, the column space $\cC(\mc M(k;\beta))$ is spanned by the columns in the set
    \begin{equation}
    \label{221023-1848}
    \cT=
        \vec{X}^{(0,k)}
        \cup 
        Y\vec{X}^{(0,k-1)}
        \cup
        Y^2\vec{X}^{(0,k-2)},
    \end{equation}
    where 
    $$
        Y^i\vec{X}^{(j,\ell)}:=(Y^iX^j,Y^iX^{j+1},\ldots,Y^iX^{\ell})
        \quad\text{with }i,j,\ell\in \ZZ_+,\; j\leq \ell,\; i+\ell\leq k.
    $$
    Let $\widetilde \cM(k;\beta)$ 
    be as in 
    \eqref{071123-1939}.
    Let 
    \begin{equation}
        \label{A-min-parabolic}
        A_{\min}:=A_{12}(A_{22})^{\dagger} (A_{12})^T.
    \end{equation}
    As described in Remark \ref{general-procedure},
    $A_{\min}$ might need to be changed to
    $$
    \widehat A_{\min}
    =A_{\min}+\eta \big(E_{1,k+1}^{(k+1)}+E_{k+1,1}^{(k+1)}\big),
    $$
where
    $$
    \eta:=(A_{\min})_{2,k}-(A_{\min})_{1,k+1}.
    $$
    Let $\cF(\mathbf{A})$ and $\cH(\mathbf{A})$ 
    be as in 
    \eqref{071123-1940}.
Define also the matrix function 
    \begin{equation}
        \label{141123-0705}
        \mc G:\RR^2\to S_{k+1},\qquad 
	\mc G(\mathbf{t},\mathbf{u})=
        \widehat A_{\min}
        +\mathbf{t}E_{1,1}^{(k+1)}
        +\mathbf{u}
        E_{k+1,k+1}^{(k+1)}.
    \end{equation}
Write
        \begin{align}
        \label{141123-0720}
        \begin{split}
        \cH(\widehat A_{\min})
        &=
        \kbordermatrix{ 
                & \mathit{1} & \vec{X}^{(1,k-1)} & X^k \\
            \mathit{1}   & \beta_{0,0}-(A_{\min})_{1,1} & (h_{12})^T &  \beta_{k,0}-(A_{\min})_{2,k}\\[0.2em]
            (\vec{X}^{(1,k-1)})^T
                & h_{12} & H_{22} & h_{23}\\[0.2em]
            X^k & \beta_{k,0}-(A_{\min})_{2,k} & (h_{23})^T & \beta_{2k,0}-(A_{\min})_{k+1,k+1}},\\[0.5em]
        H_1
        &:=(\cH(\widehat A_{\min}))_{\vec{X}^{(0,k-1)}}
        =
        \kbordermatrix{
            & \mathit{1} & \vec{X}^{(1,k-1)}\\
           \mathit{1} & \beta_{0,0}-(A_{\min})_{1,1} & (h_{12})^T\\[0.2em]
          (\vec{X}^{(1,k-1)})^T &  h_{12} & H_{22}
        },\\[0.5em]
        H_2
        &:=(\cH(\widehat A_{\min}))_{\vec{X}^{(1,k)}}
        =
        \kbordermatrix{
            & \vec{X}^{(1,k-1)} & X^k\\
        (\vec{X}^{(1,k-1)})^T &
            H_{22} & h_{23}\\[0.2em]
        X^k  &  (h_{23})^T & \beta_{2k,0}-(A_{\min})_{k+1,k+1}}.
        \end{split}
        \end{align}
Let us define the matrix
\begin{align*}
K
&:=
\cH(\widehat A_{\min})/H_{22}\\[0.3em]
&=
\begin{pmatrix}
                \beta_{0,0}-(A_{\min})_{1,1} &  \beta_{k,0}-(A_{\min})_{2,k}\\[0.2em]
            \beta_{k,0}-(A_{\min})_{2,k} & \beta_{2k,0}-(A_{\min})_{k+1,k+1}.
\end{pmatrix}
-
\begin{pmatrix}
    (h_{12})^T\\
    (h_{23})^T
\end{pmatrix}
(H_{22})^\dagger
\begin{pmatrix}
    h_{12} & h_{23}
\end{pmatrix}\\[0.3em]
&:=
\begin{pmatrix}
                \beta_{0,0}-(A_{\min})_{1,1}-(h_{12})^T(H_{22})^\dagger h_{12}&  \beta_{k,0}-(A_{\min})_{2,k}-(h_{12})^T(H_{22})^\dagger h_{23}\\[0.3em]
            \beta_{k,0}-(A_{\min})_{2,k}-(h_{23})^T(H_{22})^\dagger h_{12} & \beta_{2k,0}-(A_{\min})_{k+1,k+1}-(h_{12})^T(H_{22})^\dagger h_{12}
\end{pmatrix}\\[0.3em]
&:=
\begin{pmatrix}
                k_{11} &  k_{12}\\
            k_{12} & k_{22}
\end{pmatrix}.
\end{align*}
Let
$$
    \widehat \cT
    =\{
    \mathit{1},Y,X,XY,X^2,X^2Y,\ldots,X^{i},X^iY,\ldots,
    X^{k-1},X^{k-1}Y,X^k\},
$$
and
\begin{align}
    \label{order-parabolic}
    \begin{split}
    &\widehat P
    \text{ be a permutation matrix such that moment matrix }
    \widehat{\mc M}(k;\beta):=\widehat P\mc M(k;\beta)(\widehat P)^T\\
    &\text{has rows and columns indexed in the order }
        \widehat \cT,
        \cC\setminus \widehat\cT.
    \end{split}
    \end{align} 
Write

\begin{align}
\label{widehat-F}
\begin{split}
&\widehat\cF(\mathbf{t},\mathbf{u})
=
\widehat P
\cF(\mc G(\mathbf{t},\mathbf{u}))
(\widehat P)^T\\
&=
\kbordermatrix
{
& \mathit{1} & \widehat \cT\setminus \{\mathit{1},X^k\} & X^k & \cC\setminus \widehat\cT\\[0.2em]
\mathit{1} & (A_{\min})_{1,1}+\mathbf{t} & (f_{12})^T & (A_{\min})_{2,k} & (f_{14})^T\\[0.2em]
(\widehat \cT\setminus \{\mathit{1},X^k\})^T
& f_{12} & F_{22} & f_{23} & F_{24}\\[0.2em]
X^k & (A_{\min})_{2,k} & (f_{23})^T & (A_{\min})_{k+1,k+1}+\mathbf{u} & (f_{34})^T\\[0.2em]
\cC\setminus\widehat\cT & f_{14} & (F_{24})^T & f_{34} & F_{44}
}.
\end{split}
\end{align}

The solution to the cubic parabolic type relation TMP is the following. 

	\begin{theorem}
		\label{131023-0847}
	Let $p(x,y)=y(x-y^2)$
	and $\beta:=\beta^{(2k)}=(\beta_{i,j})_{i,j\in \ZZ_+,i+j\leq 2k}$, where $k\geq 3$.
	Assume also the notation above.
	Then the following statements are equivalent:
	\begin{enumerate}	
		\item\label{131023-0847-pt1} 
                $\beta$ has a $\cZ(p)$--representing measure.
                \smallskip
		\item\label{131023-0847-pt3}  
  $\widetilde{\mc{M}}(k;\beta)$ is positive semidefinite, the relations
	\begin{equation}\label{131023-0847-equation}
		\beta_{i,j+3}=\beta_{i+1,j+1} 
            \quad\text{hold for every }i,j\in \ZZ_+\text{ with }i+j\leq 2k-3,
	\end{equation}
	$\cH(\widehat A_{\min})$ is positive semidefinite,
        defining real numbers
				\begin{align}
					\label{131023-0847-eq2}
					\begin{split}
					t_1
					&=H_1/H_{22}
                        =\beta_{0,0}-(A_{\min})_{1,1}
                        -
                        (h_{12})^T
                        (H_{22})^\dagger 
                        h_{12},\\[0.2em]	
                        u_1
                        &=H_2/H_{22}
                        =
                        \beta_{2k,0}
                        -(A_{\min})_{k+1,k+1}
                        -
                        (h_{23})^T
                        (H_{22})^{\dagger}
                        h_{23},
		              \end{split}
				\end{align}
        and the property
        \begin{align}
        \label{measure-property}
            &
            (\cH(\widehat A_{\min}))_{\vec{X}^{(0,k-1)}}\succ 0 \quad\text{or}\quad 
            \Rank (\cH(\widehat A_{\min}))_{\vec{X}^{(0,k-1)}}
                    =
                    \Rank \cH(\widehat A_{\min}),
        \end{align}
	one of the following statements holds:
        \smallskip
        \begin{enumerate}
            \item 
            \label{parabolic-pt1}
                $F_{22}$ is not positive definite,
                $\eta=0$ and 
                \eqref{measure-property} holds.
            \smallskip
            \item 
            \label{parabolic-pt2}
                $F_{22}$ is positive definite,
                $H_{22}$ is not positive definite
                and one of the following holds:
                \smallskip
                \begin{enumerate}
                    \item
                    \label{parabolic-pt2-1}
                        $u_1=\eta=0$.
                    \smallskip
                    \item
                    \label{parabolic-pt2-2}
                        $u_1>0$, $t_1>0$, $t_1u_1\geq \eta^2$
                        and
                        $
                        \beta_{k,0}-(A_{\min})_{2,k}=
                        (h_{12})^T(H_{22})^{\dagger} h_{23}.
                        $
                \end{enumerate}
            \smallskip
            \item
            \label{parabolic-pt3}
                $F_{22},H_{22}$ are positive definite
                and one of the following holds:
                \smallskip
                \begin{enumerate}
                    \item
                    \label{parabolic-pt3-1}
                        $\eta=0$ and \eqref{measure-property} holds.
                        \smallskip
                    \item
                    \label{parabolic-pt3-2}
                        $\eta\neq 0$ and
                        \begin{equation}
                        \label{det-ineq}
                        (\sqrt{k_{11}k_{22}}-\sign(k_{12})k_{12})^2\geq \eta^2,
                        \end{equation}
                        where $\sign$ is the sign function
                        and $\sign(0)=0$.
                \end{enumerate}
        \end{enumerate}
        \end{enumerate}
  	Moreover, if a $\mc Z(p)$--representing measure for $\beta$ exists, then:
        \begin{itemize}
        \item There exists at most $(\Rank \widetilde{\mc{M}}(k;\beta)+1)$--atomic $\cZ(p)$--representing 
            measure.
        \item There exists a $(\Rank \widetilde{\mc{M}}(k;\beta))$--atomic $\cZ(p)$--representing measure
	   if and only if any of the following holds:
        \smallskip
        \begin{itemize}
            \item 
                $\eta=0$.
                \smallskip
            \item 
                $\Rank \cH(A_{\min})=
                \Rank H_{22}+2.
                $ 
                \smallskip
            \item 
                $\Rank \cH(A_{\min})=
                \Rank H_{22}+1
                $ 
                and
                one of the following holds:
                \smallskip
                \begin{itemize}
                    \item 
                        $H_{22}$ is not positive definite
                        and 
                        $t_1u_1=\eta^2$.
                    \smallskip
                    \item $H_{22}$ is positive definite,        
                        $k_{12}=0$ and $k_{11}k_{22}=\eta^2$.
                \end{itemize}
                \smallskip
        \end{itemize}
        \end{itemize}
        In particular, a $p$--pure sequence $\beta$ with a 
 $\cZ(p)$--representing 
            measure
admits a $(\Rank \widetilde{\mc{M}}(k;\beta))$--atomic $\cZ(p)$--representing 
            measure.
\end{theorem}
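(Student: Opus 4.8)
The plan is to run the scheme of Remark \ref{general-procedure}, following the structure of the proof of Theorem \ref{221023-1854} but with the parabola--TMP (Theorem \ref{131023-1211}) replacing the circle--TMP. First I would use Lemmas \ref{071123-0646}--\ref{071123-1942} to reduce the claim: $\beta$ has a $\cZ(p)$--representing measure if and only if $\widetilde{\mc M}(k;\beta)\succeq 0$, the relations \eqref{131023-0847-equation} hold (both necessary by \cite{CF04,Fia95,CF96}), and there exists $(t,u)\in\RR^2$ such that $\cF(\mc G(t,u))$ admits a $\cZ(x-y^2)$--rm while $\cH(\mc G(t,u))$ admits a $\RR$--rm, with $\mc G$ as in \eqref{141123-0705}. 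Feeding in Theorems \ref{131023-1211} and \ref{Hamburger}, each ``admits a rm'' becomes positivity together with one rank condition, so the target takes the form of \eqref{251023-1603-v2}. Here I would also observe that $\cF(\mc G(t,u))\succeq 0$ forces $t,u\geq 0$ (via $\mc G(t,u)\succeq A_{\min}$, Lemma \ref{071123-2008}), so $\cH(\widehat A_{\min})=\cH(\mc G(t,u))+tE_{1,1}^{(k+1)}+uE_{k+1,k+1}^{(k+1)}\succeq 0$; this explains why $\cH(\widehat A_{\min})\succeq 0$ is among the hypotheses.

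Next I would determine the positivity regions $\mc R_1=\{(t,u):\cF(\mc G(t,u))\succeq 0\}$ and $\mc R_2=\{(t,u):\cH(\mc G(t,u))\succeq 0\}$, as in Claims~1 and~2 of the proof of Theorem \ref{221023-1854}. Since $\mc G(t,u)-A_{\min}$ is supported on the four corner positions of its $(k{+}1)\times(k{+}1)$ block, with $t,u$ on the diagonal and $\eta$ off it, Lemma \ref{071123-2008} yields $\mc R_1=\{(t,u):t\geq 0,\ u\geq 0,\ tu\geq\eta^2\}$ and $\Rank\cF(\mc G(t,u))=\Rank A_{22}+\Rank\left(\begin{smallmatrix}t&\eta\\ \eta&u\end{smallmatrix}\right)$ on $\mc R_1$, the latter via kernel vectors of $\cF(A_{\min})$ that (apart from a single $1$ in the row $\mathit 1$ resp.\ $X^k$) are supported on columns outside $\vec{X}^{(0,k)}$. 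For $\mc R_2$ I would write $\cH(\mc G(t,u))=\cH(\widehat A_{\min})-tE_{1,1}^{(k+1)}-uE_{k+1,k+1}^{(k+1)}$ and take the Schur complement with respect to $H_{22}$: when $H_{22}\succ 0$ this presents $\mc R_2$ as the ``lower--left'' hyperbolic region cut out by $K=\cH(\widehat A_{\min})/H_{22}=\left(\begin{smallmatrix}k_{11}&k_{12}\\ k_{12}&k_{22}\end{smallmatrix}\right)$ (so $k_{11}=t_1$, $k_{22}=u_1$), and in the degenerate case I would appeal to Lemma \ref{psd-completion}; either way one also reads off the rank formula for $\cH(\mc G(t,u))$.

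The core of the proof is to decide whether $\mc R_1\cap\mc R_2$ contains a point at which \emph{both} representability conditions of Theorems \ref{131023-1211} and \ref{Hamburger} hold. The first of these concerns $\bigl(\cF(\mc G(t,u))\bigr)_{\widehat\cT\setminus\{X^k\}}$, which omits the row/column $X^k$ and is therefore independent of $u$ and $\eta$; its definiteness reduces to ``$F_{22}$ positive definite'' plus one scalar inequality in $t$. The second concerns $\bigl(\cH(\mc G(t,u))\bigr)_{\vec{X}^{(0,k-1)}}$ and is governed by ``$H_{22}$ positive definite'' and by property \eqref{measure-property}. I would then split into the cases of the theorem --- $F_{22}$ definite or not, $H_{22}$ definite or not, $\eta=0$ or not --- and in each determine the shape of $\mc R_1\cap\mc R_2$ (empty, interior, a boundary arc, or a single point), showing that one either can slide to a good point or that none exists; the single--point configuration is handled exactly as in Claim~5 of the proof of Theorem \ref{221023-1854}. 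Maximizing $(k_{11}-t)(k_{22}-u)$ along the branch $tu=\eta^2$ of $\partial\mc R_1$ produces the value in \eqref{det-ineq}, which is how that inequality arises, and the degenerate branches ($H_{22}$ singular, $F_{22}$ singular, $k_{12}=0$) collapse to the scalar conditions in \eqref{parabolic-pt1}--\eqref{parabolic-pt3}.

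For the minimality statement I would combine $\Rank\widetilde{\mc M}(k;\beta)=\Rank A_{22}+\Rank\cH(A_{\min})$ (Lemma \ref{071123-2008}) with the rank formulas above, using that $H_{22}$ is a common principal submatrix of $\cH(A_{\min})$, $\cH(\widehat A_{\min})$ and every $\cH(\mc G(t,u))$, so $\Rank\cH(\mc G(t,u))\in\{\Rank H_{22},\Rank H_{22}{+}1,\Rank H_{22}{+}2\}$; when $\eta=0$ the point $(0,0)$ gives exactly $\Rank\widetilde{\mc M}(k;\beta)$ atoms, and when $\eta\neq 0$ the perturbation has rank $\geq 1$ on $\mc R_1$, so a short comparison of $\Rank\cH(A_{\min})$ with $\Rank H_{22}+1,+2$ yields the listed dichotomy. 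A $p$--pure $\beta$ with $\widetilde{\mc M}(k;\beta)\succeq 0$ has column space equal to $\Span\cT$, which forces $A_{22}$ to be nonsingular and $\cH(A_{\min})$ positive definite, whence $\eta=0$ or $\Rank\cH(A_{\min})=\Rank H_{22}+2$, so a $(\Rank\widetilde{\mc M}(k;\beta))$--atomic $\cZ(p)$--rm exists. I expect the main difficulty to be the case analysis of the previous paragraph: non-emptiness of $\mc R_1\cap\mc R_2$ is only necessary, and in each configuration of the definiteness of $F_{22}$, $H_{22}$, $K$ and of the sign of $\eta$ one must verify that whenever the regions meet one can reach a point where both Theorems \ref{131023-1211} and \ref{Hamburger} apply, or else isolate the obstruction; extracting the clean criterion \eqref{det-ineq} from the constrained optimization along $\partial\mc R_1$, with the correct treatment of signs and of the degenerate subcases, is the delicate computational part.
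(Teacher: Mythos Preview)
Your outline is correct and follows essentially the same route as the paper: reduce via Lemmas \ref{071123-0646}--\ref{071123-1942} to finding $(t,u)$ with $\cF(\mc G(t,u))$ admitting a $\cZ(x-y^2)$--rm and $\cH(\mc G(t,u))$ admitting a $\RR$--rm, describe the regions $\mc R_1$ and $\mc R_2$ by Schur complements exactly as you indicate, and then carry out the case split on $(F_{22},H_{22},\eta)$. The one cosmetic difference is that the paper derives \eqref{det-ineq} by maximizing $tu$ over $\mc R_2\cap(\RR_+)^2$ (its Claim~3) rather than your dual formulation of maximizing $(k_{11}-t)(k_{22}-u)$ along $tu=\eta^2$; both yield the same threshold. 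Your pointer to ``Claim~5 of Theorem~\ref{221023-1854}'' is slightly off---that claim treats the two--point configuration in the circular case, and no direct analogue is needed here because the parabola criterion (Theorem~\ref{131023-1211}) and the geometry of $\mc R_1,\mc R_2$ let you select an explicit good point in each subcase---but this does not affect the correctness of your plan.
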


\begin{remark}
\label{proof-line+parabola}
In this remark we explain the idea of the proof of Theorem \ref{131023-0847}
and the meaning of conditions in the statement of the theorem.

By Lemmas \ref{071123-0646}--\ref{071123-1942}, the existence of a $\mc Z(p)$--rm for $\beta$ is equivalent to the existence of $t,u\in \RR$ such 
that 
    $\cF(\mc G(t,u))$ 
admits a $\cZ(x-y^2)$--rm 
and 
    $\cH(\mc G(t,u))$ 
admits a $\RR$--rm. 
    Let 
\begin{align*}
    \mc R_1
    &=\big\{(t,u)\in \RR^2\colon \cF(\mc G(t,u))\succeq 0\big\}
    \quad\text{and}\quad
    \mc R_2
    =\big\{(t,u)\in \RR^2\colon \cH(\mc G(t,u))\succeq 0\big\}.
\end{align*}
We denote by $\partial R_i$ and $\interior{R}_i$ the topological boundary and the interior of the set $R_i$, respectively.
 By the necessary conditions for the existence of a 
    $\cZ(p)$--rm \cite{CF04,Fia95,CF96}, 
    $\widetilde{\mc M}(k;\beta)$ must be psd and the relations
    \eqref{131023-0847-equation} must hold.
Then Theorem \ref{131023-0847}.\eqref{131023-0847-pt1} is equivalent to
\begin{align}
\label{151123-0822}
\begin{split}
&\widetilde{\cM}(k;\beta)\succeq 0, 
\text{ the relations }
\eqref{131023-0847-equation}\text{ hold 
and }\\
&\exists (t_0,u_0)\in \mc R_1\cap\mc R_2:
\cF(\mc G(t_0,u_0)) 
\text{ and }
\cH(\mc G(t_0,u_0))\text{ admit }\\
&\hspace{4cm}
\text{a }\cZ(x-y^2)\text{--rm and a }\RR\text{--rm, respectively.}
\end{split}
\end{align}
In the proof of Theorem \ref{131023-0847}
we show that 
\eqref{151123-0822} is equivalent to
Theorem \ref{131023-0847}.\eqref{131023-0847-pt3}:
\begin{enumerate}
\item
First we establish (see Claims 1 and 2 below)
that the form of:
\begin{itemize}
    \item 
$\mc R_1$ is one of the following:
    \begin{center}
    \begin{tabular}{lr}
    \includegraphics[width=5cm]{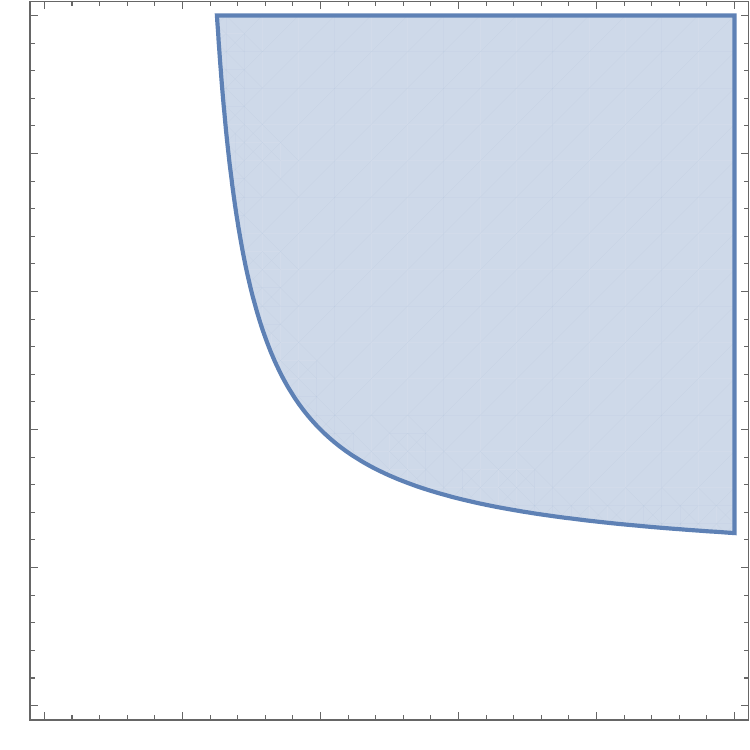}\hspace{1cm}
    &
    \includegraphics[width=5cm]{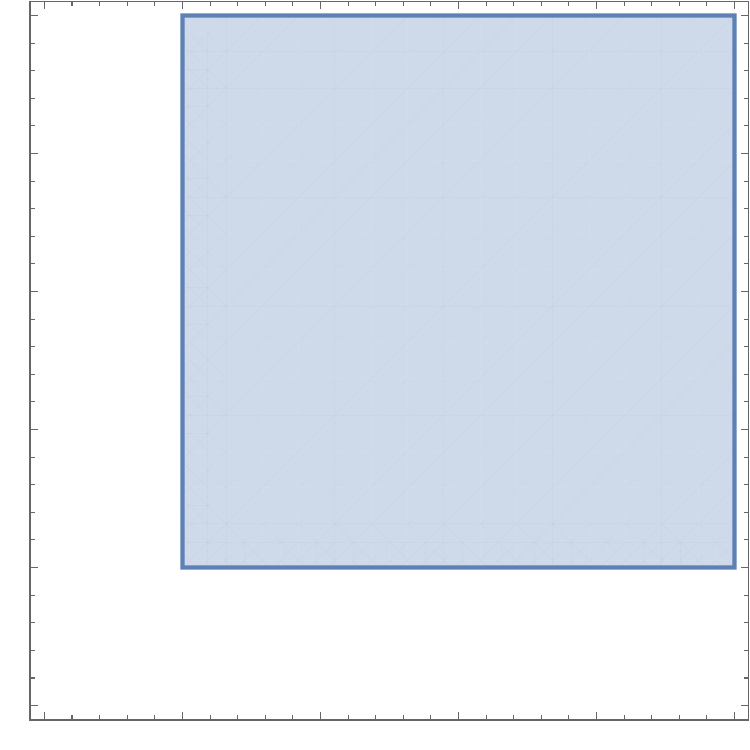}
    \end{tabular}
    \end{center}
where the left case occurs if $\eta\neq 0$ and the right if $\eta=0$.
\item 
$\mc R_2$ is one of the following:
    \begin{center}
    \begin{tabular}{lr}
    \includegraphics[width=5cm]{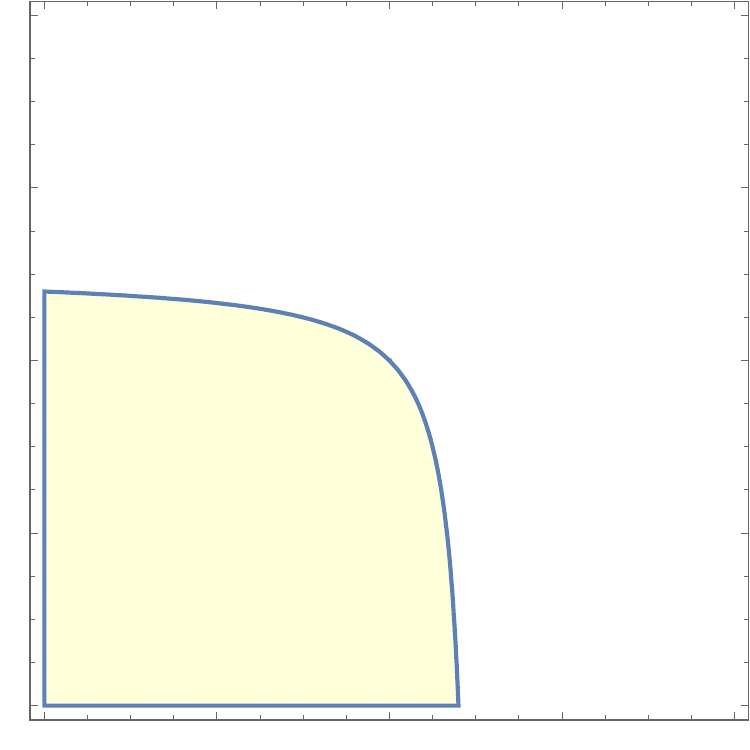}\hspace{1cm}
    &
    \includegraphics[height=5cm]{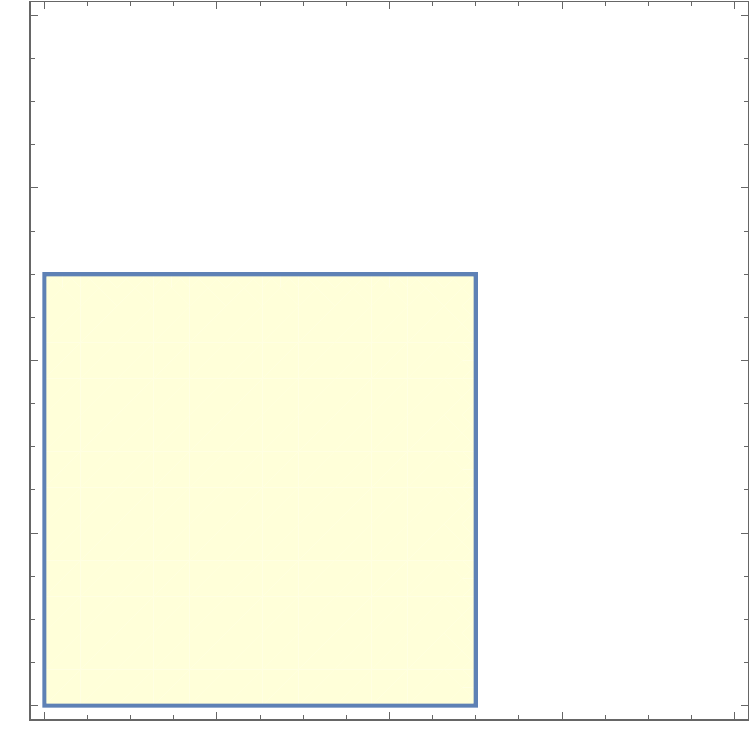}
    \end{tabular}
    \end{center}
where the left case occurs if $k_{12}\neq 0$ and the right if $k_{12}=0$. 
\end{itemize}
\item If $F_{22}$ is only positive semidefinite but not definite, then we show that
\eqref{151123-0822}
is equivalent to
\begin{align}
\label{311223-1603-v3}
\begin{split}
&\widetilde{\cM}(k;\beta)\succeq 0, 
\text{ the relations }
\eqref{131023-0847-equation}\text{ hold}, \eta=0\text{  
and }
\cH(\mc G(0,0))\text{ admits a }\RR\text{--rm}.
\end{split}
\end{align}
The latter statement is further equivalent to Theorem \ref{131023-0847}.\eqref{parabolic-pt1}. 
\item 
Assume that $F_{22}$ is positive definite and $H_{22}$ is only positive semidefinite but not definite. If:
\begin{itemize}
\item 
$u_1=0$,
then we show that \eqref{151123-0822} is equivalent to 
\eqref{311223-1603-v3}.
The latter statement is further equivalent to Theorem \ref{131023-0847}.\eqref{parabolic-pt2-1}.
\smallskip
\item 
$u_1>0$,
then we show that \eqref{151123-0822} is equivalent to 
\begin{align*}
\begin{split}
&\widetilde{\cM}(k;\beta)\succeq 0, 
\text{ the relations }
\eqref{131023-0847-equation}\text{ hold, }
\cF(\mc G(t_1,u_1)) 
\text{ and }\\
&
\cH(\mc G(t_1,u_1))\text{ admit a }
\cZ(x-y^2)\text{--rm and a }\RR\text{--rm, respectively.}
\end{split}
\end{align*}
The latter statement is further equivalent to Theorem \ref{131023-0847}.\eqref{parabolic-pt2-2}.
\smallskip
\item $u_1<0$, then \eqref{151123-0822} cannot hold.
\end{itemize}
\item
Assume that $F_{22}$ and $H_{22}$ are positive definite.
If:
\begin{itemize}
\item 
$\eta=0$,
then we show that \eqref{151123-0822} is equivalent to 
\eqref{311223-1603-v3}.
The latter statement is further equivalent to Theorem \ref{131023-0847}.\eqref{parabolic-pt3-1}.
\smallskip
\item 
$\eta\neq 0$,
then we show that \eqref{151123-0822} is equivalent to
$\mc R_1\cap \mc R_2\neq \emptyset$.
The latter statement is further equivalent to Theorem \ref{131023-0847}.\eqref{parabolic-pt3-2}.
\end{itemize}
\end{enumerate}
\end{remark}

\begin{proof}[Proof of Theorem \ref{131023-0847}]
Let $\mc R_1, \mc R_2$ be as in Remark \ref{proof-line+parabola}.
As explained in Remark \ref{proof-line+parabola}, 
Theorem \ref{131023-0847}.\eqref{131023-0847-pt1} 
is equivalent to
\eqref{151123-0822},
thus it remains to prove that
\eqref{151123-0822} is equivalent to
Theorem \ref{131023-0847}.\eqref{131023-0847-pt3}.\\

First we establish a few claims needed in the proof.
Claim 1 (resp.\ 2) describes $\mc R_1$ (resp.\ $\mc R_2$) concretely.\\

\noindent 
\textbf{Claim 1.} 
    Assume that $\widetilde{\mc{M}}(k;\beta)\succeq 0$.
    Then
    \begin{equation}
    \label{form-of-R1-parabolic}    
    \mc R_1
    =
    \big\{
    (t,u)\in \RR^2\colon 
    t\geq 0, u\geq 0, tu\geq \eta^2
    \big\}.
    \end{equation}
If $(t,u)\in \mc R_1$, we have
\begin{align}
    \label{rank-R1-parabolic}
    \Rank \cF(\mc G(t,u))=
    \left\{
    \begin{array}{rl}
            \Rank \cF(A_{\min}),&    
            \text{if }
                \eta=t=u=0, 
                \\[0.3em]
            \Rank \cF(A_{\min})+1,&    
            \text{if }
                (\eta=t=0, u>0)
                \text{ or }
                (\eta=u=0, t>0)
                \\[0.2em]
                &
                \text{ or }(\eta\neq 0, tu=\eta^2),\\[0.3em]
            \Rank \cF(A_{\min})+2,&    
            \text{if }
                tu>\eta^2.
    \end{array}
    \right.
\end{align}
where $A_{\min}$ is as in \eqref{A-min-parabolic}.\\

\noindent\textit{Proof of Claim 1.}
Note that 
\begin{align}
\label{151123-0850}
\begin{split}
    \mc G(\mathbf{t},\mathbf{u})
    &=
        A_{\min}
        +\eta \big(E_{1,k+1}^{(k+1)}+E_{k+1,1}^{(k+1)}\big)
        +\mathbf{t}E_{1,1}^{(k+1)}
        +\mathbf{u}E_{k+1,k+1}^{(k+1)}\\[0.3em]
    &=
        A_{\min}+
        \begin{pmatrix}
            \mathbf{t} & \mathbf{0}_{1,k-1} & \eta \\ 
            \mathbf{0}_{k-1,1} 
                & \mathbf{0}_{k-1} 
                & \mathbf{0}_{k-1,1} \\
            \eta & \mathbf{0}_{1,k-1} & \mathbf{u}
        \end{pmatrix}.
\end{split}
\end{align}
By Lemma \ref{071123-2008}, we have that
\begin{equation}   
\label{151123-0855}
\cF(\mc G(t,u))\succeq 0 
    \quad\Leftrightarrow\quad   
\mc G(t,u)\succeq A_{\min}
\end{equation}
Using \eqref{151123-0850}, \eqref{151123-0855} and the definition of $\mc R_1$, we have that
\begin{align}
\label{151123-0856}
    (t,u)\in \mc R_1
    \quad&\Leftrightarrow\quad 
    \begin{pmatrix}
        t & \eta \\ \eta & u
    \end{pmatrix}\succeq 0
    \quad\Leftrightarrow\quad 
    t\geq 0, u\geq 0, tu\geq \eta^2,
\end{align}
which proves \eqref{form-of-R1-parabolic}. 

To prove \eqref{rank-R1-parabolic} 
first note that by construction of $\cF(A_{\min})$, the columns $1$ and $X^k$ are in the span of the columns indexed by
    $\cC\setminus \vec{X}^{(0,k)}$.
Hence, there are vectors 
\begin{equation}
\label{151123-0901}
    v_1, v_2 \in \ker \cF(A_{\min})
\end{equation}
of the forms 
\begin{equation}
\label{candidates-v1-v2}
v_1=\begin{pmatrix}
    1 & \mathbf{0}_{1,k} & (\tilde v_1)^T
\end{pmatrix}^T\in \RR^{\frac{(k+1)(k+2)}{2}}
\quad\text{and}\quad
v_2=\begin{pmatrix}
    \mathbf{0}_{1,k} & 1 & (\tilde v_2)^T
\end{pmatrix}^T\in \RR^{\frac{(k+1)(k+2)}{2}}.
\end{equation}
Let $r:=\Rank 
\begin{pmatrix}
    t & \eta \\ \eta & u
\end{pmatrix}$.
Clearly,
\begin{equation}
\label{151123-0903} 
    \Rank \cF(\mc G(t,u))\leq \Rank \cF(A_{\min})+r.
\end{equation}
We separate three cases according to $r$.
\\

\noindent\textbf{Case 1: $r=0$.} In this case $t=u=\eta=0$ and $\mc G(0,0)=A_{\min}$. In this case \eqref{rank-R1-parabolic} clearly holds.\\

\noindent\textbf{Case 2: $r=1$.} In this case $tu=\eta^2$. Together with \eqref{151123-0856}, this is equivalent to
 $(\eta=t=0, u>0)$ or $(\eta=u=0, t>0)$ or $(\eta\neq 0, tu=\eta^2).$
By \eqref{151123-0903} and $\cF(\mc G(t,u))\succeq \cF(A_{\min})$ to
prove
\eqref{rank-R1-parabolic}, it suffices to  
find $v\in \ker \cF(A_{\min})$ and $v\notin \ker \cF(\mc G(t,u))$. Note that at least one of $v_1,v_2$ from \eqref{candidates-v1-v2}
is such a vector, since
$$
(v_1)^T\cF(\mc G(t,u))v_1=t
    \quad \text{and}\quad
(v_2)^T\cF(\mc G(t,u))v_2=u.\smallskip
$$

\noindent\textbf{Case 3: $r=2$.} In this case $tu>\eta^2$.
Note that
\begin{equation}
\label{030124-2103} 
\cF(\mc G(t,u))=
\cF\Big(\mc G\Big(\frac{\eta^2}{u},u\Big)\Big)+
\begin{pmatrix}
    t-\frac{\eta^2}{u}
\end{pmatrix}
\oplus
\mathbf{0}_{\frac{(k+1)(k+2)}{2}-1}
\succeq
\cF\Big(\mc G\Big(\frac{\eta^2}{u},u\Big)\Big).
\end{equation}
By Case 2, we have $\Rank \cF\Big(\mc G\Big(\frac{\eta^2}{u},u\Big)\Big)=\Rank \cF(A_{\min})+1$.
By \eqref{151123-0903} and \eqref{030124-2103},
to
prove
\eqref{rank-R1-parabolic}, it suffices to  
find $v\in \ker \cF\Big(\mc G\Big(\frac{\eta^2}{u},u\Big)\Big)$ and $v\notin \ker\cF(\mc G(t,u))$. 
We will check below, that $v_3$, defined by
    $$v_3=
        v_1-\frac{\eta}{u}v_2
        =
        \begin{pmatrix} 1 & \mathbf{0}_{1,k-1} & -\frac{\eta}{u} & (\tilde v_3)^T\end{pmatrix}^T\in \RR^{\frac{(k+1)(k+2)}{2}},
        $$
is such a vector.
This follows by 
$$
\cF\Big(\mc G\Big(\frac{\eta^2}{u},u\Big)\Big)v_3=\mathbf{0}_{\frac{(k+1)(k+2)}{2},1}
$$
and
$$
(v_3)^T\cF(\mc G(t,u))v_3=t-\frac{\eta^2}{u}>0.
$$

\noindent This concludes the proof of Claim 1.
\hfill$\blacksquare$\\

Note that

\begin{align*}
        \cH(\mc G(\mathbf{t},\mathbf{u}))
        &=
        \kbordermatrix{ 
                & \mathit{1} & \vec{X}^{(1,k-1)} & X^k \\
            \mathit{1}   & \beta_{0,0}-(A_{\min})_{1,1}-\mathbf{t} & (h_{12})^T &  \beta_{k,0}-(A_{\min})_{2,k}\\[0.2em]
            (\vec{X}^{(1,k-1)})^T
                & h_{12} & H_{22} & h_{23}\\[0.2em]
            X^k & \beta_{k,0}-(A_{\min})_{2,k} & (h_{23})^T & \beta_{2k,0}-(A_{\min})_{k+1,k+1}-\mathbf{u}}.
\end{align*}
Define the matrix function
\begin{align}
\label{K(G(t,u))}
\begin{split}
\mc K(\mathbf{t},\mathbf{u})
=
\cH(\mc G(\mathbf{t},\mathbf{u}))\big/H_{22}
&=
\cH(\widehat A_{\min})\big/H_{22}
-
\begin{pmatrix}
    \mathbf{t} & 0 \\ 0 & \mathbf{u}
\end{pmatrix}\\
&=
K-
\begin{pmatrix}
    \mathbf{t} & 0 \\ 0 & \mathbf{u}
\end{pmatrix}
=
\begin{pmatrix}
    k_{11}-\mathbf{t} & k_{12} \\ k_{12} & k_{22}-\mathbf{u}
\end{pmatrix}.
\end{split}
\end{align}

\noindent 
\textbf{Claim 2.} 
    Assume that $\widetilde{\mc{M}}(k;\beta)\succeq 0$.
    Then
    \begin{align}
    \label{form-of-R2-parabolic}    
    \begin{split}
    \mc R_2
    &=
    \big\{
    (t,u)\in \RR^2\colon 
    \mc K(t,u)\succeq 0
    \big\}\\
    &=\big\{
    (t,u)\in \RR^2\colon 
    t\leq k_{11}, u\leq k_{22}, (k_{11}-t)(k_{22}-u)\geq k_{12}^2
    \big\}.
    \end{split}
    \end{align}
If $(t,u)\in \mc R_2$, we have
\begin{align}
    \label{rank-R2-parabolic}
    \Rank \cH(\mc G(t,u))=
    \left\{
    \begin{array}{rl}
            \Rank H_{22},&    
            \text{if }
                k_{12}=0, t=k_{11}, u=k_{22}, 
                \\[0.3em]
            \Rank H_{22}+1,&    
            \text{if }
                (k_{11}-t)(k_{22}-u)=k_{12}^2, (t\neq k_{11}\text{ or }u\neq k_{22}),\\[0.3em]
            \Rank H_{22}+2,&    
            \text{if }
                (k_{11}-t)(k_{22}-u)>k_{12}^2.
    \end{array}
    \right.
\end{align}
where $A_{\min}$ is as in \eqref{A-min-parabolic}.\\

\noindent\textit{Proof of Claim 2.}
Permuting rows and columns of $\cH(\mc G(\mathbf{t},\mathbf{u}))$
we define
\begin{align*}
        \widetilde \cH(\mc G(\mathbf{t},\mathbf{u}))
        &=
        \kbordermatrix{ 
                & \mathit{1} & X^k & \vec{X}^{(1,k-1)} \\
            \mathit{1}   & \beta_{0,0}-(A_{\min})_{1,1}-\mathbf{t} & \beta_{k,0}-(A_{\min})_{2,k} &(h_{12})^T \\[0.2em]
            X^k & \beta_{k,0}-(A_{\min})_{2,k} & \beta_{2k,0}-(A_{\min})_{k+1,k+1}-\mathbf{u} & (h_{23})^T \\[0.2em]
            (\vec{X}^{(1,k-1)})^T
                & h_{12} & h_{23} & H_{22}}.
\end{align*}
Note that
    $$ 
    \cH(\mc G(t,u))\succeq 0 
    \quad\Leftrightarrow\quad
     \widetilde\cH(\mc G(t,u))\succeq 0
    $$
and
\begin{align}
\label{form-of-H-A-min}
        \cH(A_{\min})
        &=
        \kbordermatrix{ 
                & \mathit{1} & \vec{X}^{(1,k-1)} & X^k \\
            \mathit{1}   & \beta_{0,0}-(A_{\min})_{1,1} & (h_{12})^T &  \beta_{k,0}-(A_{\min})_{1,k+1}\\[0.2em]
            (\vec{X}^{(1,k-1)})^T
                & h_{12} & H_{22} & h_{23}\\[0.2em]
            X^k & \beta_{k,0}-(A_{\min})_{1,k+1} & (h_{23})^T & \beta_{2k,0}-(A_{\min})_{k+1,k+1}}.
\end{align}
By Lemma \ref{071123-2008}.\eqref{071123-2008-pt1}, $\cH(A_{\min})\succeq 0$.
Permuting rows and columns, this implies that
\begin{align*}
        \widetilde \cH(A_{\min})
        &=
        \kbordermatrix{ 
                & \mathit{1} & X^k & \vec{X}^{(1,k-1)} \\
            \mathit{1}   & \beta_{0,0}-(A_{\min})_{1,1} & \beta_{k,0}-(A_{\min})_{1,k+1} &(h_{12})^T \\[0.2em]
            X^k & \beta_{k,0}-(A_{\min})_{1,k+1} & \beta_{2k,0}-(A_{\min})_{k+1,k+1} & (h_{23})^T \\[0.2em]
            (\vec{X}^{(1,k-1)})^T
                & h_{12} & h_{23} & H_{22}}\succeq 0.
\end{align*}
By Theorem \ref{block-psd}, used for $(M,C)=(\widetilde \cH(A_{\min}),H_{22})$,
it follows that $H_{22}\succeq 0$ and $h_{12},h_{23}\in \cC(H_{22})$.
Let
$$
\mc L: S_2\to S_{k+1},\quad
\mc L(\mathbf{A})
=
\begin{pmatrix}
    \mathbf{A} & \left(\begin{array}{c} (h_{12})^T \\ (h_{23})^T \end{array}\right)\\
     \left(\begin{array}{cc} h_{12} & h_{23} \end{array}\right) & H_{22}
\end{pmatrix}.
$$
be a matrix function.
Using Theorem \ref{block-psd} again for 
$(M,C)=(\mc L(A),H_{22})$, it follows that 
\begin{equation}
    \label{161123-1422}
    \mc L(A)\succeq 0 
    \quad\Leftrightarrow\quad
    A\succeq 
    \begin{pmatrix}
    (h_{12})^T\\
    (h_{23})^T
\end{pmatrix}
(H_{22})^\dagger
\begin{pmatrix}
    h_{12} & h_{23}
\end{pmatrix}
\end{equation}
and
\begin{equation}
    \label{171123-1341}
    \Rank \cL(A)=\Rank H_{22}+\Rank \Big(A-\begin{pmatrix}
    (h_{12})^T\\
    (h_{23})^T
\end{pmatrix}
(H_{22})^\dagger
\begin{pmatrix}
    h_{12} & h_{23}
\end{pmatrix}\Big)
\end{equation}
Further, \eqref{161123-1422} implies that
\begin{align*}
  &\widetilde \cH(\mc G(t,u))\succeq 0
  \quad\Leftrightarrow\\[0.3em]
  &\Leftrightarrow\quad
    \begin{pmatrix}
        \beta_{0,0}-(A_{\min})_{1,1}-t & \beta_{k,0}-(A_{\min})_{2,k} \\
        \beta_{k,0}-(A_{\min})_{2,k} & \beta_{2k,0}-(A_{\min})_{k+1,k+1}-u       
    \end{pmatrix}
    -
    \begin{pmatrix}
    (h_{12})^T\\
    (h_{23})^T
\end{pmatrix}
(H_{22})^\dagger
\begin{pmatrix}
    h_{12} & h_{23}
\end{pmatrix}
    \succeq 0\\[0.3em]
    &\Leftrightarrow\quad
    \mc K(t,u)\succeq 0,
\end{align*}
where we use the definition \eqref{K(G(t,u))} of $\mc K(t,u)$ in the 
last equivalence. Moreover, 
$\Rank \widetilde{\mc H}(\mc G(t,u))=\Rank H_{22}+\Rank \mc K(t,u)$.
This proves \eqref{form-of-R2-parabolic} and \eqref{rank-R2-parabolic}.
\hfill$\blacksquare$\\

\noindent \textbf{Claim 3.} If $(t,u)\in \mc R_2\cap (\RR_+)^2$, then 
        $$tu\leq (\sqrt{k_{11}k_{22}}-\sign(k_{12})k_{12})^2=:p_{\max}.$$
    The equality is achieved if:
    \begin{itemize}
        \item $k_{12}=0$, in the point $(t,u)=(k_{11},k_{22})$.
        \smallskip
        \item $k_{12}>0$, in the point 
            $(t_-,u_-)=
            (
            k_{11}-\frac{k_{12}\sqrt{k_{11}}}{\sqrt{k_{22}}},
            k_{22}+\frac{k_{12}\sqrt{k_{22}}}{\sqrt{k_{11}}}
            )$.
        \smallskip
        \item $k_{12}<0$, in the point 
            $(t_+,u_+)=
            (
            k_{11}+\frac{k_{12}\sqrt{k_{11}}}{\sqrt{k_{22}}},
            k_{22}-\frac{k_{12}\sqrt{k_{22}}}{\sqrt{k_{11}}}
            )$.
    \end{itemize}
    Moreover, if $k_{12}\neq 0$, then for every 
    $p\in [0,p_{\max}]$
    there exists a point 
    $(\tilde t,\tilde u)\in \mc R_2\cap (\RR_+)^2$ such that 
    $\tilde t \tilde u=p$ and $(k_{11}-\tilde t)(k_{22}-\tilde u)=k_{12}^2$.
\bigskip

\noindent\textit{Proof of Claim 3.}
If $k_{12}=0$, then $(t,u)\in \mc R_2\cap (\RR_+)^2=[0,k_{11}]\times [0,k_{22}]$
and Claim 3 is clear.

Assume that $k_{12}\neq 0$. Then clearly $tu$ is maximized in some point satisfying 
$(k_{11}-t)(k_{22}-u)=k_{12}^2$.
Let $f(t):=t\big(k_{22}-\frac{k_{12}^2}{k_{11}-t}\big).$
We are searching for the maximum of $f(t)$ on the interval $[0,k_{11}]$.
The stationary points of $f$ are 
$t_{\pm}=k_{11}\pm\frac{k_{12}\sqrt{k_{11}}}{\sqrt{k_{22}}}.$ 
Then $u_\pm=k_{22}\mp\frac{k_{12}\sqrt{k_{22}}}{\sqrt{k_{11}}}$.
If $k_{12}>0$,
then $t_-\in [0,k_{11}]$ (note that $k_{11}k_{22}\geq k_{12}^2$ if 
$\mc R_2\cap (\RR_+)^2\neq \emptyset$). Further on, $t_-u_-=(\sqrt{k_{11}k_{22}}-k_{12})^2$.
Similarly, if $k_{12}<0$, then $t_{+}\in [0,k_{11}]$ and $t_+u_+=(\sqrt{k_{11}k_{22}}+k_{12})^2$. The moreover part follows by noticing that $f(0)=0$ and hence on the interval $[0,t_{\pm}]$, $f$ attains all values
between $0$ and $p_{\max}$.
\hfill$\blacksquare$\\

In the proof of Theorem \ref{131023-0847} we will need a few further observations:
\begin{itemize}
\item 
    Observe that 
    \begin{equation}
        \label{155123-1538}
        \begin{split}
        (\cH(\mc G(t,u)))_{\vec{X}^{(0,k-1)}}
        &=H_{1}-tE_{1,1}^{(k)},\\
        (\cH(\mc G(t,u)))_{\vec{X}^{(1,k-1)}}
        &=H_{22},\\
        (\cH(\mc G(t,u)))_{\vec{X}^{(1,k)}}
        &=H_{2}-uE_{k,k}^{(k)}.
        \end{split}
    \end{equation}
\item 
 We have
    \begin{equation}
    \label{upper-Schur-comp}
      (\cH(\mc G(t,u)))_{\vec{X}^{(0,k-1)}}\big/
      (\cH(\mc G(t,u)))_{\vec{X}^{(1,k-1)}}
      =
      H_1/H_{22}-t
      =
      t_1-t,
    \end{equation}
    where in the first equality we used
    \eqref{155123-1538} and in the
    second the definition of 
    $t_1$ (see \eqref{131023-0847-eq2}).
\item  We have   
    \begin{equation}
    \label{lower-Schur-comp}
      (\cH(\mc G(t,u)))_{\vec{X}^{(1,k)}}\big/
      (\cH(\mc G(t,u)))_{\vec{X}^{(1,k-1)}}
      =
      H_2/H_{22}-u
      =
      u_1-u,
    \end{equation}
    where in the first equality we used
    \eqref{155123-1538}
    and in the second the definition of 
    $u_1$ (see \eqref{131023-0847-eq2}).
\end{itemize}
\bigskip

    First we prove the implication 
    $\eqref{151123-0822}
    \Rightarrow
    \text{Theorem } \ref{131023-0847}.\eqref{131023-0847-pt3}$. 
    By the necessary conditions for the existence of a 
    $\cZ(p)$--rm \cite{CF04,Fia95,CF96}, 
    $\widetilde{\mc M}(k;\beta)$ must be psd and the relations
    \eqref{131023-0847-equation} must hold.
    By Lemma \ref{071123-2008}.\eqref{071123-2008-pt1},
    $\cF(A_{\min})\succeq 0$. 
    Hence, 
\begin{align}
\label{widehat-F-no-eta}
\begin{split}
&\widehat F
=
\widehat P
\cF(A_{\min})
(\widehat P)^T\\
&=
\kbordermatrix
{
& \mathit{1} & \widehat \cT\setminus \{\mathit{1},X^k\} & X^k & \cC\setminus \widehat\cT\\[0.2em]
\mathit{1} & (A_{\min})_{1,1} & (f_{12})^T & (A_{\min})_{1,k+1} & (f_{14})^T\\[0.2em]
(\widehat \cT\setminus \{\mathit{1},X^k\})^T
& f_{12} & F_{22} & f_{23} & F_{24}\\[0.2em]
X^k & (A_{\min})_{1,k+1} & (f_{23})^T & (A_{\min})_{k+1,k+1} & (f_{34})^T\\[0.2em]
\cC\setminus\widehat\cT & f_{14} & (F_{24})^T & f_{34} & F_{44}
}\succeq 0.,
\end{split}
\end{align}
where $\widehat P$ is as in \eqref{order-parabolic}.
In particular, $F_{22}\succeq 0$.
We separate two cases according to the invertibility of $F_{22}$.\\

    \noindent\textbf{Case 1: $F_{22}$ is not pd.}
    Let $\beta^{(c)}$ be a sequence corresponding to the moment matrix $\cF(\mc G(t_0,u_0))$.
    Let $\gamma=(\gamma_0,\ldots,\gamma_{4k})$ be a sequence defined by
    $\gamma_i=\beta^{(c)}_{\lfloor \frac{i}{2}\rfloor,i\; \text{mod}\; 2}$.
    Note that 
    $$
    \big(\widehat\cF(\mc G(t_0,u_0))\big)_{\widehat \cT\setminus \{\mathit{1},X^k\}}=
    (\widehat F)_{\widehat \cT\setminus \{\mathit{1},X^k\}}
    =F_{22}
    =A_{\widehat \gamma},
    $$
    where $\widehat\gamma=(\gamma_2,\ldots,\gamma_{4k-2})$.
    Since $F_{22}$ is not pd, it follows that there is a non-trivial column relation in $F_{22}$, which is also a column relation in $A_{\gamma}$ by Proposition \ref{extension-principle}.
    By Theorem \ref{131023-1211}, $\gamma$ has a $\RR$--rm, which implies by Theorem \ref{Hamburger}, that $A_{\gamma}$ is rg.
    Hence, the last column of $A_{\gamma}=\widehat\cF(\mc G(t_0,u_0))$ is in the span of the columns in 
    $\widehat \cT\setminus \{1,X^k\}$. It follows that
    \begin{equation}
        \label{171123-1959}
    \begin{pmatrix}
        (f_{12})^T\\
        F_{22}\\
        (f_{23})^T
    \end{pmatrix}
    (F_{22})^\dagger f_{23}=
    \begin{pmatrix}
    (A_{\min})_{2,k}\\[0.2em]
    f_{23}\\[0.2em]
    (A_{\min})_{k+1,k+1}+u_0
    \end{pmatrix}.
    \end{equation}
    On the other hand, by construction of $\widehat F$,
    the column $X^k$ is also in the span of the columns in 
    $\widehat \cT\setminus \{1,X^k\}$. Hence,
    \begin{equation}
    \label{171123-2003}
    \begin{pmatrix}
        (f_{12})^T\\
        F_{22}\\
        (f_{23})^T
    \end{pmatrix}
    (F_{22})^\dagger f_{23}=
    \begin{pmatrix}
    (A_{\min})_{1,k+1}\\[0.2em]
    f_{23}\\[0.2em]
    (A_{\min})_{k+1,k+1}
    \end{pmatrix}.
    \end{equation}
    By \eqref{171123-1959} and \eqref{171123-2003},
    it follows that $(A_{\min})_{2,k}=(A_{\min})_{1,k+1}$ or equivalently $\eta=0$, and $u_0=0$.
    Note that
    \begin{align}
    \label{measure-(0,0)-parabolic}
    \begin{split}
    \widehat\cF(\mc G(t_0,u_0))=\widehat\cF(\mc G(t_0,0))&\succeq 
    \widehat\cF(\mc G(0,0))=\cF(A_{\min}),\\
    \cH(\mc G(t_0,u_0))=\cH(\mc G(t_0,0))&\preceq 
    \cH(\mc G(0,0))=\cH(A_{\min}).
    \end{split}
    \end{align}
    Further on, 
    $\widehat\cF(A_{\min})$ has a $\cZ(x-y^2)$--rm 
    by Theorem \ref{131023-1211}
    and 
    $\cH(A_{\min})$ by Theorem \ref{Hamburger}. Indeed,
    the column $X^k$ of $\widehat\cF(A_{\min})$ is in the span of the others and since $\cH(\mc G(t_0,0))$ satisfies the conditions in Theorem \ref{Hamburger}, the same holds for $\cH(A_{\min})$. But then the property \eqref{measure-property} holds (note that $\eta=0$).
    This is the case Theorem \ref{131023-0847}.\eqref{parabolic-pt1}.
    \\
    
    \noindent\textbf{Case 2: $F_{22}$ is pd.}
    By Lemma \ref{071123-2008}.\eqref{071123-2008-pt1},
    $\cH(A_{\min})\succeq 0$ (see \eqref{form-of-H-A-min}). 
    In particular, $H_{22}\succeq 0$.
    We separate two cases according to the invertibility of
    $H_{22}$.\\

    \noindent\textbf{Case 2.1: $H_{22}$ is not pd.}
    By \eqref{lower-Schur-comp} and 
    Theorem \ref{Hamburger}, it follows that
    \begin{equation}
    \label{u1-u0-ineq}
        u_1= u_0.
    \end{equation}
    By \eqref{form-of-R1-parabolic}, 
    \begin{equation}
    \label{u0-nonneg}
        u_0\geq 0.
    \end{equation}
    We separate two cases according to the value of $u_1$.
    \\

    \noindent\textbf{Case 2.1.1: $u_1=0$.}
    By \eqref{u1-u0-ineq}, it follows 
    that $u_0=0$.
    Note that
    \begin{equation}
    \label{171123-2109}
    \big(\widehat\cF(\mc G(t_0,u_0))\big)_{
    \widehat \cT\setminus\{\mathit{1}\}
    }
    =
    \big(\widehat\cF(\mc G(t_0,0))\big)_{
    \widehat \cT\setminus\{\mathit{1}\}
    }
    =
    \big(\widehat F\big)_{\widehat \cT\setminus\{\mathit{1}\}}.
    \end{equation}
    Since in $\widehat F$ we have the column relation
    \eqref{171123-2003} by construction,
    \eqref{171123-2109} and Proposition \ref{extension-principle}
    imply that
    $$\big(\widehat\cF(\mc G(t_0,0))\big)_{
    \widehat \cT,\widehat \cT\setminus\{\mathit{1},X^k\}
    }
    (F_{22})^\dagger f_{23}
    =
    \big(\widehat\cF(\mc G(t_0,0))\big)_{
    \widehat \cT,\{X^k\}
    },
    $$
    or equivalently \eqref{171123-1959} with $u_0=0$.
    By \eqref{171123-1959} and \eqref{171123-2003},
    it follows that $(A_{\min})_{2,k}=(A_{\min})_{1,k+1}$ or equivalently $\eta=0$.
    This is the case Theorem \ref{131023-0847}.\eqref{parabolic-pt2-1}.
    \\
    
    \noindent\textbf{Case 2.1.2: $u_1>0$.}
    Since the column $X^k$ of $\cH(\mc G(t_0,u_1))$ is
    in the span of the columns in $\vec{X}^{(1,k-1)}$,
    it first follows
    by observing the first row of 
    $\cH(\mc G(t_0,u_1))$ that
    \begin{equation}
        \label{181123-0735}
        \beta_{k,0}-(A_{\min})_{2,k}=
                        (h_{12})^T(H_{22})^{\dagger} h_{23}.
    \end{equation}
    Further on,
    \begin{equation}
    \label{171123-2359}
     \cH(\mc G(t,u_1))\big/
      (\cH(\mc G(t,u_1)))_{\vec{X}^{(1,k)}}=
     (\cH(\mc G(t,u_1)))_{\vec{X}^{(0,k-1)}}\big/
      (\cH(\mc G(t,u_1)))_{\vec{X}^{(1,k-1)}}
      =
      t_1-t,
    \end{equation}
    where we used \eqref{upper-Schur-comp}
    in the second equality.
    By \eqref{171123-2359} and Theorem \ref{block-psd} used
    for $(M,C)=(\cH(\mc G(t,u_1)),(\cH(\mc G(t,u_1)))_{\vec{X}^{(1,k)}})$, it follows that
    $\cH(\mc G(t_1,u_1))\succeq 0$. 
    By Theorem \ref{Hamburger}, $\cH(\mc G(t_1,u_1))$ 
    admits a $\RR$--rm.
    Note that     
    \begin{align}
    \label{measure-(t1,u1)-parabolic}
    \begin{split}
    \widehat\cF(\mc G(t_0,u_0))=\widehat\cF(\mc G(t_0,u_1))&\preceq 
    \widehat\cF(\mc G(t_1,u_1)),
    \end{split}
    \end{align}
    where we used that $t_0\leq t_1$ by \eqref{171123-2359}.
    By Theorem \ref{131023-1211}, $(\widehat\cF(\mc G(t_1,u_1)))_{\widehat{\mc T}\setminus \{X^k\}}$
    must be pd. (Here we used that since $u_1>0$ and $F_{22}\succ 0$,
    it follows that $(\widehat\cF(\mc G(t_1,u_1)))_{\widehat{\mc T}\setminus \{1\}}\succ 0.$)
    Therefore 
    Claim 1 implies that $t_1>0$ and $t_1u_1\geq \eta^2$.
    Together with \eqref{181123-0735}, this is the case Theorem \ref{131023-0847}.\eqref{parabolic-pt2-2}.
    \\
    
    \noindent\textbf{Case 2.2: $H_{22}$ is pd.}
    We separate two cases according to the value of $\eta.$\\

    \noindent\textbf{Case 2.2.1: $\eta=0$.}
    By Lemma \ref{071123-2008}.\eqref{071123-2008-pt1},
    $\cH(A_{\min})\succeq 0$ (see \eqref{form-of-H-A-min}).
    
    If $\cH(A_{\min})$ does not admit a $\RR$--rm, it follows
    by Theorem \ref{Hamburger}, that
    $(\cH(A_{\min}))_{\vec{X}^{(0,k-1)}}$ is not pd
    and $u_1>0$. Equivalently,
    $$
    t_1=(\cH(A_{\min}))_{\vec{X}^{(0,k-1)}}
    \big/ H_{22}=0,
    $$
    which by \eqref{upper-Schur-comp} implies that $t_0=0$.
    By Theorem \ref{131023-1211}, since 
    $\widehat\cF(\mc G(t_0,u_0))
    =
    \widehat\cF(\mc G(0,u_0))
    $
    admits a $\cZ(x-y^2)$--rm, $F_{22}\succ 0$ and
    $(\widehat\cF(\mc G(0,u_0)))_{\widehat \cT\setminus \{X^k\}}$
    is not pd,
    it follows that $u_0=0$.
    But then $\cH(\mc G(t_0,u_0))=\cH(\mc G(0,0))=\cH(A_{\min})$
    does not admit a $\RR$--rm, which is a contradiction.
    
    Hence, $\cH(A_{\min})$ admits a $\RR$--rm, which is equivalent to 
    \eqref{measure-property} (using $\eta=0$).
    This is the case Theorem \ref{131023-0847}.\eqref{parabolic-pt3-1}.
    \\

    \noindent\textbf{Case 2.2.2: $\eta\neq 0$.}
    By \eqref{rank-R1-parabolic} it follows that
    $t_0u_0\geq \eta^2.$
    This fact and Claim 3 imply the second condition in 
    the case Theorem \ref{131023-0847}.\eqref{parabolic-pt3-2}.
    \\

    \noindent This concludes the proof of the implication 
    $\eqref{151123-0822}
    \Rightarrow
    \text{Theorem } \ref{131023-0847}.\eqref{131023-0847-pt3}$.\\

    Next we prove the implication 
    $\text{Theorem } \ref{131023-0847}.\eqref{131023-0847-pt3}
    \Rightarrow
    \eqref{151123-0822}
    $.
    We separate five cases according to the assumptions in 
    $\text{Theorem } \ref{131023-0847}.\eqref{131023-0847-pt3}$.\\

    \noindent\textbf{Case 1: $\text{Theorem } \ref{131023-0847}.\eqref{parabolic-pt1}$ holds.}
    By Lemma \ref{071123-2008}.\eqref{071123-2008-pt1},
    $\cF(A_{\min})\succeq 0$ and
    $\cH(A_{\min})\succeq 0$.
    Since $\eta=0$, both matrices have a moment structure. 
    Since by construction,
    the column $X^k$ of $\widehat\cF(A_{\min})$ is in the span of the others, it has a $\cZ(x-y^2)$--rm 
    by Theorem \ref{131023-1211}.
    Since $\cH(A_{\min})$ satisfies \eqref{measure-property} (using $\eta=0$), it admits a $\RR$--rm
    by Theorem \ref{Hamburger}. 
    This proves \eqref{151123-0822} in this case.
    \\

    \noindent\textbf{Case 2: $\text{Theorem } \ref{131023-0847}.\eqref{parabolic-pt2-1}$ holds.}
    By the same reasoning as in the Case 1 above,
    $\widehat\cF(A_{\min})$ has a $\cZ(x-y^2)$--rm.
    Since $u_1=0$, the column $X^k$ of $\cH(A_{\min})$
    is in the span of the other columns. 
    By Theorem \ref{Hamburger},
    $\cH(A_{\min})$ admits a $\RR$--rm.
    This proves \eqref{151123-0822} in this case.
    \\

    \noindent\textbf{Case 3: $\text{Theorem } \ref{131023-0847}.\eqref{parabolic-pt2-2}$ holds.}
    By \eqref{upper-Schur-comp}, \eqref{lower-Schur-comp} and the fourth assumption of \eqref{parabolic-pt2-2},
    it follows that $\cH(\mc G(t_1,u_1))$ is psd and the columns $1,X^k$ are in the span of the columns in $\vec{X}^{(1,k-1)}$. By Theorem \ref{Hamburger},
    $\cH(\mc G(t_1,u_1))$ admits a $\RR$--rm. 
    Since $(t_1,u_1)\in \mc R_1$ by \eqref{form-of-R1-parabolic} and the assumptions in 
    \eqref{parabolic-pt2-2}, it follows that $\widehat F(\mc G(t_1,u_1))$ is psd 
    and by construction, $\big(\widehat F(\mc G(t_1,u_1))\big)_{\widehat \cT\setminus \{X^k\}}$
    is pd. By Theorem \ref{131023-1211}, it has a $\cZ(x-y^2)$--rm.
    This proves \eqref{151123-0822} in this case.
    \\
    
    \noindent\textbf{Case 4: $\text{Theorem } \ref{131023-0847}.\eqref{parabolic-pt3-1}$ holds.}
    $\widehat\cF(A_{\min})$ has a $\cZ(x-y^2)$--rm
    and 
    $\cH(A_{\min})$ has a $\RR$--rm
    by the same reasoning as in the Case 1 above.
    This proves \eqref{151123-0822} in this case.\\

    \noindent\textbf{Case 5: $\text{Theorem } \ref{131023-0847}.\eqref{parabolic-pt3-2}$ holds.}
    We separate three cases according to the sign of $k_{12}$.
    \begin{itemize}
        \item If $k_{12}=0$, then by Claim 2, 
            $\cH(\mc G(k_{11},k_{22}))$ is psd and the column $X^k$
            is in the span of the previous ones. Since 
            $\cH(\mc G(0,0))=\cH(\widehat A_{\min})$
            is psd by assumption, it follows that $k_{11}\geq 0$
            and $k_{22}\geq 0$. Since $\eta\neq 0$ and $k_{11}k_{22}\geq \eta^2$ by \eqref{det-ineq}, it follows that $k_{11}>0$ and $k_{22}>0$.
            By Claim 1,
            $\widehat\cF(\mc G(k_{11},k_{22}))\succ 0$. By Theorem \ref{131023-1211}, it has a $\cZ(x-y^2)$--rm.
            This proves \eqref{151123-0822} in this case.
        \item 
            If $k_{12}>0$, then by Claim 3, $\cH(\mc G(t_-,u_-))$ is psd
            and $t_-u_-\geq \eta^2$.
            By construction, $\Rank \cH(\mc G(t_-,u_-))=k$
            and since $t_-<k_{11}$, it follows that
            $(\cH(\mc G(t_-,u_-)))_{\vec{X}^{(0,k-1)}}$
            is pd. 
            Hence, the column $X^k$ of $\cH(\mc G(t_-,u_-))$ is in the span of the others.
            By Theorem \ref{Hamburger}, $\cH(\mc G(t_-,u_-))$ admits a $\RR$--rm.
            By Claim 1 and $t_-u_-\geq \eta^2$,
            it follows that
            $\widehat\cF(\mc G(t_-,u_-))\succeq 0$.
            Since $t_->0$, it follows that 
            $\big(\widehat F(\mc G(t_-,u_-))\big)_{\widehat \cT\setminus \{X^k\}}$
    is pd. By Theorem \ref{131023-1211}, it has a $\cZ(x-y^2)$--rm.
            This proves \eqref{151123-0822} in this case.
        \item 
            If $k_{12}<0$, then the proof of \eqref{151123-0822} is analogous to the case     $k_{12}>0$ by replacing
            $(t_-,u_-)$ with $(t_+,u_+)$.\\
    \end{itemize}

     \noindent This concludes the proof of the implication
        $\text{Theorem } \ref{131023-0847}.\eqref{131023-0847-pt3}\Rightarrow\eqref{151123-0822}.
    $\\
    
    By now we established the equivalence
    $\eqref{131023-0847-pt1}
    \Leftrightarrow
    \eqref{131023-0847-pt3}$
    in Theorem $\ref{131023-0847}$.
    It remains to prove the moreover part.   
    We observe again the proof of the implication
    $\eqref{131023-0847-pt3}
    \Rightarrow
    \eqref{151123-0822}.$
    By Lemma \ref{071123-2008}.\eqref{071123-2008-pt3},
    \begin{equation}
        \label{181123-0849}
        \Rank \widetilde{\mc M}(k;\beta)
        =\Rank \widehat\cF(A_{\min})+
        \Rank \cH(A_{\min}).
    \end{equation}
    
    In the proofs of the implications
    Theorem $\ref{131023-0847}.\eqref{parabolic-pt1}\Rightarrow
    \eqref{151123-0822}$, 
    $\text{Theorem } \ref{131023-0847}.\eqref{parabolic-pt2-1}\Rightarrow
    \eqref{151123-0822}$
    and 
    $\text{Theorem } \ref{131023-0847}.\eqref{parabolic-pt3-1}\Rightarrow
    \eqref{151123-0822}$,
    we established that
    $\widehat\cF(A_{\min})$ and $\cH(A_{\min})$
    admit a $\cZ(x-y^2)$--rm and a $\RR$--rm, respectively.
    By Theorems \ref{Hamburger} and \ref{131023-1211}, there also exist
    a $(\Rank\widehat\cF(A_{\min}))$--atomic and a $(\Rank\cH(A_{\min}))$--atomic rm\textit{s}. 
    By \eqref{181123-0849}, $\beta$ has a $(\Rank \widetilde{\mc M}(k;\beta))$--atomic $\cZ(p)$--rm.

    Assume that
    $\text{Theorem } \ref{131023-0847}.\eqref{parabolic-pt2-2}$
    holds. We separate two cases according to the value of 
    $\eta$:
    \begin{itemize}
        \item $\eta=0$. We separate two cases according to the existence of a $\RR$--rm of $\cH(A_{\min})$:
        \smallskip
        \begin{itemize}
        \item 
        \textit{The last column of $\cH(A_{\min})$ is in the span of the previous ones.} Then as in the previous paragraph, $\widehat\cF(A_{\min})$ and $\cH(A_{\min})$
        admit a $(\Rank\widehat\cF(A_{\min}))$--atomic $\cZ(x-y^2)$--rm and a $(\Rank\cH(A_{\min}))$--atomic
        $\RR$--rm, respectively.
        Hence, $\beta$ has a $(\Rank \widetilde{\mc M}(k;\beta))$--atomic $\cZ(p)$--rm. 
        \smallskip
        \item 
        \textit{The last column of $\cH(A_{\min})$ is not in the span of the previous ones.}
        Since also $t_1>0$, it follows that 
        $\Rank \cH(A_{\min})=\Rank H_{22}+2$.
        But then 
        $\Rank \cH(\mc G(t_1,u_1))=\Rank H_{22}$
        and 
        $\Rank \widehat\cF(\mc G(t_1,u_1))=\Rank \widehat\cF(A_{\min})+2$ (see \eqref{rank-R1-parabolic}).
        This implies that 
        $\widetilde{\mc M}(\beta;k)$ admits a 
         $(\Rank \widetilde{\mc M}(k);\beta)$--atomic $\cZ(p)$--rm.
         \end{itemize}
         \smallskip
         \item $\eta\neq 0$. We separate two cases according to $\Rank\cH(A_{\min})$, which can be either 
         $\Rank H_{22}+2$ or $\Rank H_{22}+1$ (since $t_1>0$).
         \smallskip
         \begin{itemize}
             \item $\Rank \cH(A_{\min})=\Rank H_{22}+2$. 
             Then as in the second Case of the case $\eta=0$ above, in the point $(t_1,u_1)$ there is a $(\Rank \widetilde{\mc M}(k;\beta))$--atomic $\cZ(p)$--rm for $\beta$. (Note that $t_1u_1$ is automatically strictly larger than $\eta^2$,
         otherwise the measure was $(\Rank \widetilde{\mc M}(k;\beta)-1)$--atomic, which is not possible.)
            \smallskip
            \item $\Rank \cH(A_{\min})=\Rank H_{22}+1$.
            In this case we have
            \begin{align*}
            \Rank \cH(\mc G(t_1,u_1))
            +
            \Rank \widehat\cF(\mc G(t_1,u_1))
            &=
            \Rank H_{22}
            +
            \Rank \widehat\cF(\mc G(t_1,u_1))\\
            &=
            \left\{
            \begin{array}{rl}
                \Rank H_{22}+\Rank \widehat\cF(A_{\min})+1,&
                \text{if }t_1u_1=\eta^2,\\[0.3em]
                \Rank H_{22}+\Rank \widehat\cF(A_{\min})+2,&
                \text{if }t_1u_1>\eta^2,
            \end{array}
            \right.\\[0.3em]
             &=
            \left\{
            \begin{array}{rl}
                \Rank \widetilde {\mc M}(k;\beta),&
                \text{if }t_1u_1=\eta^2,\\[0.3em]
                \Rank \widetilde {\mc M}(k;\beta)+1,&
                \text{if }t_1u_1>\eta^2,
            \end{array}
            \right.
            \end{align*}
            where we used \eqref{rank-R1-parabolic}
            in the second and \eqref{181123-0849}
            in the third equality.
            Hence, $\beta$ has a $(\Rank \widetilde {\mc M}(k;\beta))$--atomic rm if $t_1u_1=\eta^2$
            and $(\Rank \widetilde {\mc M}(k;\beta)+1)$--atomic rm if $t_1u_1>\eta^2$.
            It remains to show that in the case $t_1u_1>\eta^2$,
            there does not exist a 
            $(\Rank \widetilde {\mc M}(k;\beta))$--atomic rm.
            Since $H_{22}$ is not pd and $u_1>0$, if 
            $\cH(\mc G(t',u'))$ has a $\RR$--rm, then $u'=u_1$.
            Since 
            $\eta\neq 0$, then 
            $\widehat\cF(\mc G(t',u_1))$ with a $\cZ(x-y^2)$--rm
            is at least $(\Rank \widehat\cF(A_{\min})+1)$--atomic (see \eqref{rank-R1-parabolic}).
            If $t'\neq t_1$, then $\Rank \cH(\mc G(t',u_1))=\Rank H_{22}+1$. Hence, 
            $$
            \Rank \cH(\mc G(t',u_1))+
            \Rank \widehat\cF(\mc G(t',u_1))
            \geq 
            (\Rank H_{22}+1)
            +
            (\Rank \widehat\cF(A_{\min})+1)
            =\Rank \widetilde{\mc M}(k;\beta)+1, 
            $$
            where we used \eqref{181123-0849}
            in the last equality.
         \end{itemize}
         \smallskip
         \item Assume that
    $\text{Theorem } \ref{131023-0847}.\eqref{parabolic-pt3-2}$
    holds. We separate two cases according to the value of 
    $k_{12}$.
    \begin{itemize}
        \item $k_{12}=0$. 
            We separate two cases according to $\Rank \cH(A_{\min})$,
            i.e., $\Rank \cH(A_{\min})\in \{k,k+1\}$. Note that
            $\Rank \cH(A_{\min})$ cannot be $k-1$,
            since $\eta\neq 0$ and $k_{12}=0$
            imply that $\big(\cH(A_{\min})/H_{22}\big)_{12}\neq 0$.
        \smallskip
                 \smallskip
         \begin{itemize}
             \item $\Rank \cH(A_{\min})=k+1$. 
             Then as in the second case of the case $\eta=0$ of $\text{Theorem } \ref{131023-0847}.\eqref{parabolic-pt2-2}$ above, in the point $(t_1,u_1)$ there is a $(\Rank \widetilde{\mc M}(k;\beta))$--atomic $\cZ(p)$--rm for $\beta$. (Note that $t_1u_1$ is automatically strictly larger than $\eta^2$,
         otherwise the measure was $(\Rank \widetilde{\mc M}(k;\beta)-1)$--atomic, which is not possible.) 
            \smallskip
            \item $\Rank \cH(A_{\min})=k$.
            In this case we have
            \begin{align*}
            \Rank \cH(\mc G(k_{11},k_{22}))
            +
            \Rank \cF(\mc G(k_{11},k_{22}))
            &=
            \left\{
            \begin{array}{rl}
                \Rank H_{22}+\Rank \cF(A_{\min})+1,&
                \text{if }k_{11}k_{22}=\eta^2,\\[0.3em]
                \Rank H_{22}+\Rank \cF(A_{\min})+2,&
                \text{if }k_{11}k_{22}>\eta^2,
            \end{array}
            \right.\\[0.3em]
             &=
            \left\{
            \begin{array}{rl}
                \Rank \widetilde {\mc M}(k;\beta),&
                \text{if }k_{11}k_{22}=\eta^2,\\[0.3em]
                \Rank \widetilde {\mc M}(k;\beta)+1,&
                \text{if }k_{11}k_{22}>\eta^2,
            \end{array}
            \right.
            \end{align*}
            where we used \eqref{rank-R1-parabolic}
            in the first and \eqref{181123-0849}
            in the second equality.
            Hence, $\beta$ has a $(\Rank \widetilde {\mc M}(k;\beta))$--atomic rm if $k_{11}k_{22}=\eta^2$
            and $(\Rank \widetilde {\mc M}(k;\beta)+1)$--atomic rm if $k_{11}k_{22}>\eta^2$.
            It remains to show that in the case $k_{11}k_{22}>\eta^2$,
            there does not exist a 
            $(\Rank \widetilde {\mc M}(k;\beta))$--atomic rm.
            Since $\eta\neq 0$, if $\cF(\mc G(t',u'))$ is psd, 
            it follows that $t'u'\geq \eta^2$ by \eqref{form-of-R1-parabolic}.
            But then if $\widehat\cF(\mc G(t',u'))$ also admits a $\cZ(x-y^2)$--rm,
            this rm 
            is at least $(\Rank \widehat\cF(A_{\min})+1)$--atomic (see \eqref{rank-R1-parabolic}).
            If $t'<k_{11}$ or $u'<k_{22}$, then 
            $\Rank \cH(\mc G(t',u'))\geq \Rank H_{22}+1$. Hence, 
            $$
            \Rank \cH(\mc G(t',u'))+
            \Rank \widehat\cF(\mc G(t',u'))
            \geq 
            (\Rank H_{22}+1)
            +
            (\Rank \widehat\cF(A_{\min})+1)
            =\Rank \widetilde{\mc M}(k;\beta)+1, 
            $$
            where we used \eqref{181123-0849}
            in the last equality.
         \end{itemize}
         \smallskip
        \item $k_{12}\neq 0$.
            We separate two cases according to $\Rank \cH(A_{\min})$,
            i.e.\ $\Rank \cH(A_{\min})\in \{k,k+1\}$. Note that
            $\Rank \cH(A_{\min})$ cannot be $k-1$,
            since otherwise 
            $\cH(\widehat A_{\min})/H_{22}
            =
            \begin{pmatrix}
            0 & \eta\\
            \eta & 0
            \end{pmatrix}
            $,
            which cannot be psd by $\eta\neq 0$.
             By Claim 3, there is a point $(\tilde t,\tilde u)\in \mc R_2\cap (\RR_+)^2$,
             such that $\tilde t\tilde u=\eta^2$ and 
             $(k_{11}-\tilde t)(k_{22}-\tilde u)=k_{12}^2$.
             By \eqref{rank-R1-parabolic} and \eqref{rank-R2-parabolic} we have 
            \begin{align*}
            \Rank \cH(\mc G(\tilde t,\tilde u))
            +
            \Rank \widehat\cF(\mc G(\tilde t,\tilde u))
            &=
            (\Rank H_{22}+1)
            +
            (\Rank \widehat\cF(A_{\min})+1)\\[0.3em]
             &=
            \left\{
            \begin{array}{rl}
                \Rank \widetilde {\mc M}(k;\beta),&
                \text{if }\Rank \cH(A_{\min})=k+1,\\[0.3em]
                \Rank \widetilde {\mc M}(k;\beta)+1,&
                \text{if }\Rank \cH(A_{\min})=k,
            \end{array}
            \right.
            \end{align*}
            where we used \eqref{181123-0849}
            in the second equality.
            It remains to show that in the case $\Rank \cH(A_{\min})=k$,
            there does not exist a 
            $(\Rank \widetilde {\mc M}(k;\beta))$--atomic rm.
            Since $\eta\neq 0$, if $\widehat\cF(\mc G(t',u'))$ is psd, 
            it follows that $t'u'\geq \eta^2$ by \eqref{form-of-R1-parabolic}.
            But then if $\widehat\cF(\mc G(t',u'))$ also admits a $\cZ(x-y^2)$--rm,
            this rm 
            is at least $(\Rank \widehat\cF(A_{\min})+1)$--atomic (see \eqref{rank-R1-parabolic}).
            Since $k_{12}\neq 0$, $\Rank \cH(\mc G(t',u'))\geq \Rank H_{22}+1$
            by \eqref{rank-R2-parabolic}. Hence, 
            $$
            \Rank \cH(\mc G(t',u'))+
            \Rank \widehat\cF(\mc G(t',u'))
            \geq 
            (\Rank H_{22}+1)
            +
            (\Rank \widehat\cF(A_{\min})+1)
            =\Rank \widetilde{\mc M}(k;\beta)+1, 
            $$
            where we used \eqref{181123-0849}
            in the last equality.
    \end{itemize}
    \end{itemize}
    This concludes the proof of the moreover part.

    Since for a $p$--pure sequence 
    with $\widetilde{\mc M}(k;\beta))\succeq 0$, \eqref{181123-0849} implies that $\cH(A_{\min})$ is pd, it follows by the moreover part that 
    the existence of a $\cZ(p)$--rm
    implies the existence of 
    a $(\Rank \widetilde{\mc M}(k;\beta))$--atomic $\cZ(p)$--rm.
\end{proof}

The following example demonstrates the use of Theorem \ref{131023-0847}
to show that there exists a bivariate $y(x-y^2)$--pure sequence $\beta$ of degree 6 with a positive semidefinite $\mc M(3)$ and without a $\cZ(y(x-y^2))$--rm.

\begin{example}\label{ex-line-plus-parabola-no-rm}
Let $\beta$ be a bivariate degree 6 sequence given by
\begin{align*}
    \beta_{00} & = \frac{1228153}{1372615},
    & \beta_{10} & =\frac{97}{10},
    & \beta_{01} & = \frac{21}{10},\\[0.2em]
      \beta_{20} & = \frac{2289}{10},
    &  \beta_{11} &= \frac{441}{10},
     &\beta_{02} & =\frac{91}{10},\\[0.2em]
    \beta_{30} & =\frac{67207}{10},
    & \beta_{21} & =\frac{12201}{10},
    & \beta_{12} & =\frac{455}{2},\\[0.2em]
     \beta_{03} & =\frac{441}{10},
    & \beta_{40} & =\frac{2142693}{10},
    & \beta_{31} & =\frac{376761}{10},\\[0.2em]
     \beta_{22} & =\frac{67171}{10},
    & \beta_{13} & =\frac{12201}{10},
    &     \beta_{04} & =\frac{455}{2},\\[0.2em]
    \beta_{50} &=
        \frac{71340727}{10},
    &\beta_{41} &=
        \frac{12313161}{10},
    &\beta_{32} &=
        \frac{428519}{2},\\[0.2em]
    \beta_{23} &=
        \frac{376761}{10},
    &\beta_{14} &=
        \frac{67171}{10},
    &\beta_{05} &=
        \frac{12201}{10},\\[0.2em]
    \beta_{60} &= 
        \frac{2438236509}{10},
    &\beta_{51} &=
        \frac{415998681}{10},
    &\beta_{42} &=
        \frac{71340451}{10},\\[0.2em]
    \beta_{33} &=
        \frac{12313161}{10},
    &\beta_{24} &=
        \frac{428519}{2},
    &\beta_{15} &=
        \frac{376761}{10},\\[0.2em]
    \beta_{06} &=
        \frac{67171}{10}.
\end{align*}
Assume the notation as in Theorem \ref{131023-0847}.
$\widetilde {\mc M}(3)$ is psd with the eigenvalues
 $\approx 2.51\cdot 10^8$, $\approx 47179$, $\approx 112.1$, $\approx 7.4$, 
 $\approx 1.11$, $\approx 0.1$, $\approx 0.03$, $\approx 0.0005$, $\approx 4.9\cdot 10^{-6}$, $0$,
and the column relation $Y^3=YX$.
We have that
$$
A_{\min}
=
\begin{pmatrix}
    \frac{5537}{9230} & \frac{91}{10} & \frac{455}{2} & \frac{61999553}{9230}\\[0.5em]
    \frac{91}{10} & \frac{455}{2} & \frac{67171}{10} & \frac{428519}{2}\\[0.5em]
    \frac{455}{2} & \frac{67171}{10} & \frac{428519}{2} & \frac{71340451}{10}\\[0.5em]
    \frac{61999553}{9230} & \frac{428519}{2} & \frac{71340451}{10} & \frac{450098209309}{1846}
\end{pmatrix}
$$
and so 
$$\eta=\frac{67171}{10}-\frac{61999553}{9230}=-\frac{72}{923}.$$
The matrices $F_{22}$ and $H_{22}$ are equal to:
\begin{align*}
F_{22}
&=\begin{pmatrix}
 \frac{91}{10} & \frac{441}{10} & \frac{455}{2} & \frac{12201}{10} & \frac{67171}{10} \\[0.5em]
 \frac{441}{10} & \frac{455}{2} & \frac{12201}{10} & \frac{67171}{10} & \frac{376761}{10} \\[0.5em]
 \frac{455}{2} & \frac{12201}{10} & \frac{67171}{10} & \frac{376761}{10} & \frac{428519}{2} \\[0.5em]
 \frac{12201}{10} & \frac{67171}{10} & \frac{376761}{10} & \frac{428519}{2} & \frac{12313161}{10} \\[0.5em]
 \frac{67171}{10} & \frac{376761}{10} & \frac{428519}{2} & \frac{12313161}{10} & \frac{71340451}{10} 
\end{pmatrix},\qquad
H_{22}
=\begin{pmatrix}
 \frac{7}{5} & \frac{18}{5} \\[0.5em]
 \frac{18}{5} & \frac{49}{5}
\end{pmatrix}.
\end{align*}
They are both pd with the eigenvalues $\approx 7.3\cdot 10^6$, $\approx 1987.6$, $\approx 5.6$, $\approx 0.099$, $\approx 0.0013$ and $\approx 11.1$, $\approx 0.068$, respectively.
The matrix $K$ is equal to
$$
K=
\begin{pmatrix}
    k_{11} & k_{12} \\ 
    k_{12} & k_{22}
\end{pmatrix}
=
\begin{pmatrix}
 \frac{6050329}{48143098510} & \frac{3}{95} \\[0.2em]
 \frac{3}{95} & \frac{4941414}{87685} 
\end{pmatrix}
$$
and thus
\begin{equation}
\label{020123-0913}
    (\sqrt{k_{11}k_{12}}-k_{12})^2-\eta^2=-0.0033<0.
\end{equation}
By Theorem \ref{131023-0847}, $\beta$ does not have a $\cZ(y(x-y^2))$--rm, since by \eqref{parabolic-pt3-2} of Theorem \ref{131023-0847},
\eqref{020123-0913} should be positive.
\end{example}



\begin{thebibliography}{Arv666}

\bibitem[Alb69]
{Alb69}
	A.\ Albert, 
		\textit{Conditions for positive and nonnegative definiteness in terms of pseudoinverses},
			SIAM J. Appl. Math. \textbf{17} (1969), 434--440.

\bibitem[Akh65]
{Akh65}
	N.I.\ Akhiezer.
		\textit{The classical moment problem and some related questions in analysis},
			New York: Hafner Publishing Co., 1965.

\bibitem[AK62]
{AhK62}
	N.I.\ Akhiezer, M.\ Krein.
		\textit{Some questions in the theory of moments},
			Transl.\ Math.\ Monographs 2. Providence: American Math.\ Soc., 1962.

\bibitem[BZ21]
{BZ21}
	A.\ Bhardwaj, A.\ Zalar. 
		\textit{The tracial moment problem on quadratic varieties}, 
			J.\ Math.\ Anal.\ Appl.\  \textbf{498} (2021).
	Available from:
				\url{https://doi.org/10.1016/j.jmaa.2021.124936}.

\bibitem[BW11]
{BW11}
	M.\ Bakonyi, H.J.\ Woerdeman, 
		\textit{Matrix Completions, Moments, and Sums of Hermitian Squares}, 
			Princeton University Press, Princeton, 2011.

\bibitem[Ble15]
{Ble15}
	G.\ Blekherman.
 		\textit{Positive Gorenstein ideals},
			Proc.\ Amer.\ Math.\ Soc.\ \textbf{143} (2015) 69--86.
	Available from:
				\url{https://doi.org/10.1090/S0002-9939-2014-12253-2}.
%
\bibitem[BF20]
{BF20}
	G.\ Blekherman, L.\ Fialkow.
		\textit{The core variety and representing measures in the truncated moment problem},
			Journal of Operator Theory \textbf{84} (2020) 185--209.

\bibitem[CGIK+]
{CGIK+}
	R.\ Curto, M.\ Ghasemi, M.\ Infusino, S.\ Kuhlmann.
	\textit{The truncated moment problems for unital commutative $\RR$-algebras}, 
		Journal of Operathor Theory, to appear.
	Available from:
			\url{https://arxiv.org/pdf/2009.05115.pdf}. 

\bibitem[CHM74]
{CHM74}
	D.\ Carlson, E.\ Haynsworth, T.\ Markham,
		\textit{A generalization of the Schur complement by means of the Moore-Penrose inverse},
			SIAM J.\ Appl.\ Math..\ \textbf{26(1)} (1974) 169--175.

\bibitem[CH69]
{CH69}
	D.\ Crabtree, E.\ Haynsworth, 
		\textit{An identity for the Schur complement of a matrix}. 
			Proc.\ Am.\ Math.\ Soc.\ \textbf{22} (1969) 364--366.

\bibitem[CF91]
{CF91}
    R.\ Curto, L.\ Fialkow, 
    \textit{Recursiveness, positivity, and truncated moment problems},
    Houston J.\ Math.\ 
    \textbf{17} (1991) 603--635.
    
\bibitem[CF96]
{CF96}
	 R.\ Curto, L.\ Fialkow,
\textit{Solution of the truncated complex moment problem
	 	for flat data},
   Mem.\ Amer.\ Math.\ Soc.\ \textbf{119} (1996).
   
\bibitem[CF02]
{CF02}
	 R.\ Curto, L.\ Fialkow,
\textit{Solution of the singular quartic moment problem},
		J.\ Operator Theory \textbf{48} (2002) 315--354.

\bibitem[CF04]
{CF04}
	R. Curto, L. Fialkow, \textit{Solution of the truncated parabolic moment problem},
		Integral Equations Operator Theory \textbf{50}  (2004), 169--196.

\bibitem[CF05]
{CF05}
	R. Curto, L. Fialkow, \textit{Solution of the truncated hyperbolic moment problem},
		Integral Equations Operator Theory \textbf{52} (2005) 181--218.  

\bibitem[CF05b]
{CF05b}
	R. Curto, L. Fialkow, \textit{Truncated $K$-moment problems in several variables},
		J.\ Operator Theory \textbf{54} (2005) 189--226.

\bibitem[CF08]
{CF08}
    R.\ Curto, L.\ Fialkow,
    \textit{An analogue of the Riesz-Haviland theorem for the truncated moment 
    problem}, 
    J.\ Funct.\ Anal.\  
    \textbf{225} (2008) 2709--2731.

\bibitem[CF13]
{CF13}
    R.\ Curto, L.\ Fialkow,
    \textit{Recursively determined representing measures for bivariate truncated moment sequences},
    J.\ Operator Theory 
    \textbf{70(2)} (2013) 401--436.

\bibitem[CFM08]
{CFM08}
	R. Curto, L. Fialkow, H. M. M\"oller, \textit{The extremal truncated moment problem}, 
		Integral Equations Operator Theory \textbf{60(2)} (2008) 177-200. 

\bibitem[CY14]
{CY14}
	R. Curto, S. Yoo, \textit{Cubic column relations in the truncated moment problems}, J.\ Funct.\ Anal.\ \textbf{266(3)} (2014) 1611--1626. 

\bibitem[CY15]
{CY15}
	R. Curto, S. Yoo, \textit{Non-extremal sextic moment problems}, J.  Funct. Anal. \textbf{269(3)} (2015) 758--780. 

\bibitem[CY16]
{CY16}
	R. Curto, S. Yoo, \textit{Concrete solution to the nonsingular quartic binary moment problem}, 
		Proc. Amer. Math. Soc. \textbf{144} (2016) 249--258.

\bibitem[Dan92]
{Dan92}
	J. Dancis, \textit{Positive semidefinite completions of partial hermitian matrices},
		Linear Algebra Appl. \textbf{175} (1992) 97--114.

\bibitem[DS18]
{DS18}
 	P.J.\ di Dio, K.\ Schm\"udgen. 
	\textit{The multidimensional truncated Moment Problem:Atoms, Determinacy, and Core Variety},
	J.\ Funct.\ Anal. \textbf{274} (2018) 3124--3148.
	Available from:
	\url{https://doi.org/10.1016/j.jfa.2017.11.013}.

\bibitem[Fia95]
{Fia95}
	L. Fialkow, \textit{Positivity, extensions and the truncated complex moment problem}, Contemporary Math. \textbf{185} (1995), 133--150.

\bibitem[Fia11]
{Fia11}
    L.\ Fialkow,
    \textit{Solution of the truncated moment problem with variety $y=x^3$},
    Trans.\ Amer.\ Math.\ Soc.\ 
    \textbf{363} (2011) 3133--3165. 

\bibitem[Fia15]
{Fia15}
	L. Fialkow, \textit{The truncated moment problem on parallel lines}, 
	In: Theta Foundation International Book Series of Mathematical Texts \textbf{20} (2015), 99--116.

\bibitem[Fia17]
{Fia17}
	L.\ Fialkow. 
		\textit{The core variety of a multisequence in the truncated moment problem},
		J.\ Math.\ Anal.\ Appl.\ \textbf{456} (2017) 946--969. 
	Available from:
	\url{https://doi.org/10.1016/j.jmaa.2017.07.041}.

\bibitem[FN10]
{FN10}
	L. Fialkow, J. Nie,  
		\textit{Positivity of Riesz functionals and solutions of quadratic and quartic
		moment problems}, J. Funct. An. \textbf{258} (2010), 328--356.

\bibitem[GJSW84]
{GJSW84}
	R. Grone, C. R. Johnson, E. M. S\'a, H. Wolkowicz, \textit{Positive definite completions of partial hermitian matrices},
	Linear Algebra Appl. \textbf{58} (1984), 109--124. 

\bibitem[Kim14]
{Kim14}
	D.P.\ Kimsey.
	\textit{The cubic complex moment problem},
		 Integral Equations Operator Theory \textbf{80} (2014) 353-–378.
	Available from:
			\url{https://doi.org/10.1007/s00020-014-2183-4}.
%
\bibitem[Kim21]
{Kim21}
	D.P.\ Kimsey.
	\textit{On a minimal solution for the indefinite truncated multidimensional moment problem},
		J.\ Math.\ Anal.\ Appl.\ \textbf{500} (2021)
	Available from:
			\url{https://doi.org/10.1016/j.jmaa.2021.125091}.

\bibitem[KN77]
{KN77}
	K.G.\ Krein, A.A.\ Nudelman.
		\textit{The Markov moment problem and extremal problems},
			Translations of Mathematical Monographs, Amer.\ Math.\ Soc.; 1977.

\bibitem[Lau05]
{Lau05}
	M. Laurent, 
		\textit{Revising two theorems of Curto and Fialkow on moment matrices},
			Proc. Amer. Math. Soc. \textbf{133} (2005), 2965--2976. 

\bibitem[Nie14]
{Nie14}
	J.\ Nie. 
		\textit{The $\mathcal{A}$-truncated $K$-moment problem},
		Found.\ Comput.\ Math. \textbf{14} (2014) 1243--1276. 

%
\bibitem[Sch17]
{Sch17}
	K.\ Schm\"udgen.
	 \textit{The moment problem},
		Graduate Texts in Mathematics 277. Cham: Springer; 2017.

\bibitem[Wol]
{Wol}
	Wolfram Research, Inc., Mathematica, Version 12.0, Wolfram Research, Inc., Champaign,
		IL, 2020.

\bibitem[Yoo17a]
{Yoo17a}
	S.\ Yoo, \textit{Sextic moment problems on 3 parallel lines}, Bull. Korean Math. Soc. \textbf{54} (2017), 299--318.

\bibitem[Yoo17b]
{Yoo17b}
	S.\ Yoo, \textit{Sextic moment problems with a reducible cubic column relation}, Integral Equations Operator Theory \textbf{88} (2017), 45--63.

\bibitem[YZ+]
{YZ+}
	S.\ Yoo, A.\ Zalar, 
 \textit{The truncated moment problem on reducible cubic curves II: 
Hyperbolic type relations}, in preparation.

\bibitem[Zal21]
{Zal21}
	A.\ Zalar. 
	\textit{The truncated Hamburger moment problem with gaps in the index set},
	Integral Equations Operator Theory \textbf{93} (2021) 36 pp. 
	Available from:
	\url{https://doi.org/10.1007/s00020-021-02628-6}.
 
\bibitem[Zal22a]
{Zal22a}
	A.\ Zalar. 
	\textit{The truncated moment problem on the union of parallel lines},
	Linear Algebra and its Applications \textbf{649} (2022) 186--239. 
	Available from:
 \url{https://doi.org/10.1016/j.laa.2022.05.008}.

 \bibitem[Zal22b]
{Zal22b}
	A.\ Zalar. 
	\textit{The strong truncated Hamburger moment problem with and without gaps},
	J. Math. Anal. Appl. \textbf{516} (2022) 21pp.
	Available from:
	\url{https://doi.org/10.1016/j.jmaa.2022.126563}.

 \bibitem[Zal23]
{Zal23}
	A.\ Zalar. 
	\textit{The truncated moment problem on curves $y=q(x)$ and $yx^\ell=1$},
	Linear and Multilinear Algebra (2023) 45pp.
	Available from:
	\url{https://doi.org/10.1080/03081087.2023.2212316}.

\bibitem[Zha05]
{Zha05}
F. Zhang, \textit{The Schur Complement and Its Applications}, 
	Springer-Verlag, New York, 2005.
 
\end{thebibliography}
\end{document}